\numberwithin{equation}{section}
\newtheorem{theorem}{Theorem}[section]
\newtheorem{lemma}[theorem]{Lemma}
\newtheorem{proposition}[theorem]{Proposition}
\theoremstyle{definition}
\newtheorem{example}[theorem]{Example}
\newtheorem{remark}[theorem]{Remark}
\newcommand{\mc}{\mathcal}
\newcommand{\mb}{\mathbb}
\newcommand{\la}{\lambda}
\newcommand{\norm}[1]{\left\lVert#1\right\rVert}
\newcommand{\pd}[2]{\frac{\partial#1}{\partial#2}}
\newcommand{\R}{\mb{R}}
\newcommand{\e}{\varepsilon}
\newcommand{\D}{\Delta}
\newcommand{\ps}[3]{\left( #2, #3 \right)_{#1}}
\newcommand{\inn}{\textnormal{in}\ }
\newcommand{\onn}{\textnormal{on}\ }
\newcommand{\dvol}{\ \textnormal{dv}}
\newcommand{\da}{\ \textnormal{da}}
\newcommand{\lnorm}[3]{| | #1 | |_{L^{#2}(#3)}}
\newcommand{\hnorm}[3]{| | #1 | |_{H^{#2}(#3)}}
\newcommand{\capa}[2]{\textnormal{\rm{Cap}}_{#1}(#2)}
\pgfplotsset{compat=1.18}
\begin{document}
\title[Quantitative spectral stability for compact operators]
{Quantitative spectral stability for compact operators}

\author[A. Bisterzo, G. Siclari]{Andrea Bisterzo  and Giovanni Siclari}

\address{Andrea Bisterzo
  \newline \indent Dipartimento di Scienze di Base e Applicate per l'Ingegneria
  \newline \indent
Sapienza Università di Roma
\newline\indent Via Antonio Scarpa 14, 00161, Roma, Italy.}
\email{andrea.bisterzo@uniroma1.it}

\address{ Giovanni Siclari 
  \newline \indent Dipartimento di Matematica
  \newline \indent Politecnico di Milano
  \newline\indent Piazza Leonardo da Vinci 32, 20133, Milano, Italy}
\email{giovanni.siclari@polimi.it}

\date{\today}

\begin{abstract}
This paper deals with quantitative spectral stability for compact operators acting on $L^2(X,m)$, where $(X,m)$ is a measure space. Under fairly general assumptions,  we provide a characterization of the dominant term of the asymptotic expansion of the eigenvalue variation in this abstract setting.  Many of the results about  quantitative spectral stability available in the literature can be recovered by our analysis.  Furthermore, we illustrate our result with several applications, e.g. quantitative  spectral stability for a Robin to Neumann problem, conformal transformations of Riemann metrics, Dirichlet forms under the removal of sets of small capacity, and  for families  of pseudo-differentials operators.
\end{abstract}

\maketitle

\hskip10pt{\footnotesize {\bf Keywords.} Compact operators, spectral theory, asymptotics of eigenvalues.

\medskip 

\hskip10pt{\bf 2020 MSC classification.}
35P15,  	
47A10,  	
47A55       
}

\section{Introduction}
Let us consider the Hilbert space $L^2(X,m)$, where $(X,m)$ is a measure space endowed with a positive measure $m$.
Given $\{(Y_\e, m_\e)\}_{\e\in [0,1]}$ a family of measure spaces, where $Y_\e \subseteq X$ and $m_\e$ are positive measures over $Y_\e$, let $\{H_\e\}_{\e \in [0,1]}$ be a family of   non-negative,   self-adjoint operators
\begin{equation}\label{def_He}
H_\e:D[H_\e]\subseteq L^2(Y_\e,m_\e)\to L^2(Y_\e,m_\e),
\end{equation}
with $m_0=m$ and $Y_0=X$.
Moreover, we  suppose that the spectrum $\sigma(H_\e)$ of $H_\e$ is discrete for any $\e \in[0,1]$ and we denote it by
$\sigma(H_\e)=\{\la_{n,\e}\}_{n \in \mb{N}}$, where we are repeating each eigenvalues according to its multiplicity.

The present paper is aimed to the study of the asymptotic of  the eigenvalue's variation 
$\la_{n,\e}-\la_{n,0}$ as $\e \to 0^+$ for  simple eigenvalues, under suitable assumptions on the family $\{H_\e\}_{\e \in [0,1]}$. 
In particular we are interested in a \textit{quantitative} result, that is, in \textit{how} the eigenvalues $\lambda_{n,\e}$ converge to $\lambda_{n,0}$ as $\e\to 0^+$.

There is a vast literature about quantitative spectral stability for compact operators, with many examples  in the field of elliptic PDEs on the Euclidean space or on Riemann manifolds, in particular under smooth and nonsmooth deformation of the domain, see for  example   the classical paper \cite{RT_qualitative_stability}. We also refer to  \cite{H_eigen} for an overview.

A very classic problem is quantitative spectral stability for elliptic operators under some geometrical perturbation on the domain,
for example, removing a small set and  imposing Dirichlet boundary condition on the removed set.

In the seminal paper \cite{O_Dirichlet_holes_euclidean}, the variation of the eigenvalues of the Laplace operator acting on an Euclidean domain $D$, and perturbed by removing small balls, is characterized explicitly by the means of Green functions. For generic holes concentrating in a compact set $K$, the eigenvalue's variation is characterized  in terms of weighted capacity of the removed sets  in \cite{FNO_disa_Dirichlet_region}.
In dimension $2$, a precise first order asymptotic expansion of the eigenvalue's variation for similar problems is obtained in \cite{AFHC_spec_AB} and in \cite{ALM_cap_exp}. In particular, in this latter work the authors provided an expansion in series of the weighted capacity of a scaling hole $\e K$.
In an arbitrary dimension $N$, for scaling holes $\e K$, it is possible to explicitly estimate the vanishing order, see \cite{FNO_disa_Dirichlet_region}. 
In the general setting of Riemann manifolds we refer to 
 \cite{C_mainfols_holes} for a precise description of the eigenvalue's variation in terms of the capacity of the removed sets and for a more detailed overview of the literature for the Dirichlet Laplacian on manifolds.

For the fractional Laplacian operator in a bounded domain of $\mathbb{R}^N$, a similar characterization of the eigenvalue's variation in terms of a weighted  fractional capacity was proved in \cite{AFN_fractional}. Furthermore, in the same paper, the author managed to compute the eigenvalue's variation in the case of a sequence of shrinking holes of the form $\e K$, where $K$ is a compact subset. Such computation is sharp in the case of set of positive $N$-dimension  Lebesgue measure.

For the ground state of a generic Dirichlet form $(\mc{E}, \mc{F})$ in a measure space $(X,m)$, under the removal of sets of small capacity, the eigenvalue's variation is expanded at the second order in \cite{G_Dirichlet_forms}, under suitable assumption on the resolvent and semigruop associated to $\mc{E}$.

For the polyarmonic operator $(-\Delta )^m$ acting on a bounded Euclidean domain, the eigenvalue's variation is computed in \cite{FR_polyharmonic} in terms of a suitable notion of weighted capacity. If the removed sets are given by  $\e K$ for some compact set $K$, a more precise description of the eigenvalue's variation, with explicit estimates on its vanishing 
order, is also provided in the aforementioned work.

The Neumann  Laplacian operator in a bounded domain $\Omega\subset \R^N$, where  Neumann conditions are imposed  on the boundary of the removed set,  was studied for example in  \cite{L_Neumann_analyt,FLO_holes_Neumann}. 
In \cite{L_Neumann_analyt} removing a  set of size $\e$, the author expresses the eigenvalue of the perturbed problem as an analytic function in two variables composed with the perturbing parameter $\e$ and $\delta_N \e \log(\e)$, where $\delta_N= 1$ if $N$ is odd and $0$ if $N$ is even.
In \cite{FLO_holes_Neumann}  the eigenvalue's variation is  characterizes by the   \textit{  weighted torsion } of the removed set. Furthermore, for holes of the form $x_0+\e K$ for some $x_0 \in \Omega $ and compact $K \subset \Omega$, the  eigenvalue's variation can be computed more explicitly. It is worth noticing that its vanishing order strongly depends on the point $x_0$. A completely explicit expression of the eigenvalue's variation is obtained for spherical holes.
We also mention that the study of the convergence in Hausdorff distance of the spectrum for generics holes in $\R^2$ was studied in \cite{BSH_neum_spect}.

A different case of geometric perturbation was consider in \cite{FO_tubes}, for the Euclidean Dirichlet Laplacian. More precisely the authors attached a small cylindrical tube to the unperturbed domain $\Omega$ and let the section of  the cylinder shrinks. The vanishing order of the eigenvalue's variation is computed with monotonicity formula's  techniques.

The case where mixed Dirichlet and Neumann boundary condition are imposed  on the boundary of a  bounded domain $\Omega\subset \mathbb{R}^N$ for the Laplacian operator were consider in \cite{FNO_disa_Dirichlet_region,FNO_disa_Neuman_region}. More precisely, if the  Dirichlet region vanishes, then the  eigenvalue's variation can be quantified in terms of a weighted capacity of a disappearing  set. With  techniques based on a combination min-max estimates and  
a monotonicity formula, the case of a vanishing Neumann region was studied in \cite{FNO_disa_Neuman_region}. 

Finally, we mention that, for Aharonov-Bohm operators with an arbitrary numbers of moving poles, the eigenvalue's variation was recently studied in \cite{FNOS_AB_12} and \cite{FNS_AB}. In this case, the quantity that  characterizes the eigenvalue's variation may be seen as an intermediate notion between a weighted torsion and a weighted capacity.
There is a vast literature about quantitative spectral stability for  Aharonov-Bohm operators, see  \cite{FNOS_AB_12} for a detailed overview.

\bigskip

In the present paper, see Theorem \ref{theo_exp_1_order}, we  characterize the eigenvalue's variation as 
\begin{equation}
\la_{n,\e}-\la_{n,0}=\frac{\la_0\int_{Y_\e} \phi_{n,0} V_{n,\e} \ \textnormal{d}m_\e +O\left(\norm{V_{n,\e}}^2_{L^2(Y_\e,m_\e)}\right)}{\int_{Y_\e} |\phi_{n,0}|^2 \, \textnormal{d}m_\e+O(\norm{V_{n,\e}}_{L^2(Y_\e,m_\e)})}
\quad \text{ as  } \e \to 0,
\end{equation}
where $\phi_{n,0}$ is a normalized eigenfunction of the operator $H_0$ associated to the simple eigenvalue $\la_{n,0}$ and $V_{\e,n}$ is the unique minimizer of an energy functional $J_{\e,n}$ which depends on $\phi_{n,0}$, see \eqref{def_J_ef},
and we also  obtain the convergence of the associated  eigenfunctions in a suitable sense.

Theorem \ref{theo_exp_1_order} allows us to recover many of the previously mention results. In particular we are referring to
\cite[Theorem 5.2]{FNOS_AB_12},  \cite[Theorem 5.3]{FNS_AB}, \cite[Theorem 2.3]{FLO_holes_Neumann} and, see Subsection \ref{subsect_Diri_forms} for details, to \cite[Theorem 2.5]{FNO_disa_Dirichlet_region}, \cite[Theorem 1.5]{AFN_fractional} and \cite[Thoerem 1.2]{FR_polyharmonic}. Furthermore  Theorem \ref{theo_exp_1_order} generalizes \cite[Theorem 3.6]{G_Dirichlet_forms} to the case of simple eigenvalues and removes several assumptions, see Subsection \ref{subsect_Diri_forms}.

We also presents several applications of Theorem \ref{theo_exp_1_order}.

The paper is organized as follows. In Section \ref{sec_assump_preli} we provide some abstract preliminaries results and present the assumptions needed to prove Theorem  \ref{theo_exp_1_order}. In Section \ref{sec_first_order_exp} we prove Theorem \ref{theo_exp_1_order} and discuss several results usefull to ensure the validity of its assumption, focusing  on qualitative spectral stability in Subsection \ref{subsec_spectral_stability}. Finally, in Section \ref{sec_applications}, as applications of  Theorem  \ref{theo_exp_1_order},   we  study quantitative  spectral stability for a Robin to Neumann problem, conformal transformations of Riemann metrics, Dirichlet forms under the removal of sets of small capacity, and for  families of pseudo-differentials operators.

\section{Preliminaries and assumptions}\label{sec_assump_preli} 
Throughout the paper, for any measure  space $(X,m)$ we will  denote with $(\cdot,\cdot)_{L^2(X,m)}$ the scalar product in $L^2(X,m)$. 
Furthermore we denote the resolvent and the spectrum of an operator $H$ respectively by $\rho(H)$ and $\sigma(H)$, while we will use $D[H]$ for its domain. We also recall that  a symmetric, positive form $(\mc{E},\mc{F})$ on $L^2(X,m)$ is \textit{closed} if its domain $\mc{F}$ is complete with respect to the norm induced by the scalar product
\begin{equation}\label{def_E1}
  \mc{E}_1:=\mc{E}+(\cdot,\cdot)_{L^2(X,m)}.   
\end{equation}
To any positive, self-adjoint and densely defined  operator   
\begin{equation}
H:D[H]\subseteq L^2(X,m)\to L^2(X,m),
\end{equation}
it is possible to associate a unique symmetric bilinear form  $(\mc{E},\mc{F})$ given by
\begin{equation}\label{def_E}
\mc{E}(u,v):=(\sqrt{H} u,\sqrt{H} v)_{L^2(X,m)} \quad \text{ for any } u,v \in \mc{F},
\end{equation}
where $\mc{F}:=D[\sqrt{H}]$. Moreover,  the form $(\mc{E},\mc{F})$ is closed. On the other hand, any positive densely defined, closed symmetric forms admits a unique  operator $H$ such that $\eqref{def_E}$ holds; see for example \cite[Theorem 1.3.1]{FM_Dirichlet_forms_book}. 

\subsection{Preliminaries}\label{subsec_preliminaries} In this subsection we present some abstract results that will be used throughout the paper. For the sake of completeness we also provide a short proof of each result.
\begin{proposition}\label{prop_compact_embedding}
Let $(\mc{E},\mc{F})$ be a symmetric, positive, densely defined and closed form on $L^2(X,m)$ and let $H$ be the associated densely defined, positive and self-adjoint operator. 
Suppose $\la:=\min{\sigma(H)}>0$ is an eigenvalue of $H$ and that 
\begin{equation}
H^{-1}: L^2(X,m) \to  L^2(X,m) \quad \text{ is a compact operator.}
\end{equation}
Then the embedding $i:\mc{F} \hookrightarrow L^2(X,m)$ is compact.
\end{proposition}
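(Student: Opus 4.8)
The plan is to turn the strict positivity $\la=\min\sigma(H)>0$ into a coercivity estimate for $\mc{E}$, and then to exhibit the embedding $i$ as the composition of a bounded map with a compact one. Concretely, I would first note that for every $u\in\mc{F}=D[\sqrt{H}]$ the spectral theorem gives $\mc{E}(u,u)=\norm{\sqrt{H}\,u}_{L^2(X,m)}^2\ge\la\,\norm{u}_{L^2(X,m)}^2$, so that
\[
\mc{E}(u,u)\le\mc{E}_1(u,u)=\mc{E}(u,u)+\norm{u}_{L^2(X,m)}^2\le\bigl(1+\la^{-1}\bigr)\mc{E}(u,u).
\]
Hence the Hilbert norm of $\mc{F}$ is equivalent to $u\mapsto\norm{\sqrt{H}\,u}_{L^2(X,m)}$, and in particular $\sqrt{H}\colon\mc{F}\to L^2(X,m)$ is a bounded operator.

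Next I would observe that $H^{-1/2}$ is compact. Since $\min\sigma(H)=\la>0$, the operator $H^{-1}$ is everywhere defined and bounded, and it is compact, positive and self-adjoint by hypothesis; the spectral theorem for compact self-adjoint operators then writes $H^{-1}=\sum_j\mu_j\,(\,\cdot\,,e_j)_{L^2(X,m)}\,e_j$ for an orthonormal system $\{e_j\}$ and eigenvalues $0<\mu_j\le\la^{-1}$ with $\mu_j\to0$. Consequently $H^{-1/2}=(H^{-1})^{1/2}=\sum_j\sqrt{\mu_j}\,(\,\cdot\,,e_j)_{L^2(X,m)}\,e_j$ is a norm limit of finite rank operators, hence compact, and it maps $L^2(X,m)$ bijectively onto $D[\sqrt{H}]=\mc{F}$ with $H^{-1/2}\sqrt{H}\,u=u$ for all $u\in\mc{F}$. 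Therefore, as maps $\mc{F}\to L^2(X,m)$ one has the factorization $i=H^{-1/2}\circ\sqrt{H}$: the composition of the bounded operator $\sqrt{H}\colon\mc{F}\to L^2(X,m)$ with the compact operator $H^{-1/2}\colon L^2(X,m)\to L^2(X,m)$ is compact, which is the claim. (Equivalently, one could argue directly: a bounded sequence in $\mc{F}$ has the form $u_k=H^{-1/2}w_k$ with $\{w_k\}$ bounded in $L^2(X,m)$, so it is relatively compact in $L^2(X,m)$ by compactness of $H^{-1/2}$.)

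The only points that require some care are the two functional-calculus facts used in the middle step, namely that the square root of a compact positive self-adjoint operator is again compact and that $H^{-1/2}$ is a left inverse of $\sqrt{H}$ on $\mc{F}$; both are standard consequences of the spectral theorem for (possibly unbounded) self-adjoint operators, and the hypothesis that $\la=\min\sigma(H)>0$ is exactly what guarantees that $H^{-1}$, hence $H^{-1/2}$, is everywhere defined and bounded so that the factorization makes sense.
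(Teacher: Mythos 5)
Your proof is correct and rests on the same mechanism as the paper's: compactness of $H^{-1/2}$ together with the identity $u=H^{-1/2}\sqrt{H}\,u$ for $u\in\mc F$. The paper phrases it as a sequential weak-to-strong argument (a weakly convergent $v_n$ in $\mc F$ has $\sqrt{H}v_n$ weakly convergent in $L^2$, then applies the compact operator $H^{-1/2}$), whereas you package the same idea as the explicit factorization $i=H^{-1/2}\circ\sqrt{H}$; these are equivalent formulations of the same argument.
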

\begin{proof}
Since $H^{-1}$ is compact, by \cite[Theorem 12.30]{R_functional_analysis_book} and the Spectral Theorem for bounded, self-adjoint operators, $H^{-\frac{1}{2}}$ is a compact operator. Let $v_n \rightharpoonup v$ weakly in $\mc{F}$ as $n \to \infty$. Then for any $w \in \mc{F}$
\begin{equation*}
\lim_{n\to \infty}\ps{L^2(X,m)}{\sqrt{H}v_n}{\sqrt{H}w}=\lim_{n\to \infty}\mc{E}(v_n,w)=\mc{E}(v,w)
= \ps{L^2(X,m)}{\sqrt{H}v}{\sqrt{H}w}.
\end{equation*}
Hence  $\sqrt{H}v_n \rightharpoonup \sqrt{H} v$ weakly in $L^2(X,m)$  as $n \to \infty$
and so, by the compactness of $H^{-\frac{1}{2}}$, $v_n \to v$ strongly in $L^2(X,m)$ as $n \to \infty$.   
\end{proof}

In the following we will make use of the next (abstract) Poincare's inequality. 
\begin{proposition}\label{prop_poin}
Let $(\mc{E},\mc{F})$ be a symmetric, densely defined, positive and closed form on $L^2(X,m)$ and  let $H$ be the associated densely defined, positive and self-adjoint operator. Suppose  that $\la:=\min{\sigma(H)}>0$ is an eigenvalue of $H$ and that $H^{-1}$ is  a compact operator. Then  
\begin{equation}\label{ineq_poinc}
\norm{w}^2_{L^2(X,m)} \le \la^{-1} \mc{E}(w,w) \quad \text{ for any } w \in \mc{F}.
\end{equation}
\end{proposition}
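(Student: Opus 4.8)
The plan is to exploit the spectral decomposition of the self-adjoint operator $H$, which is available since $H^{-1}$ is compact. Indeed, compactness of $H^{-1}$ together with positivity of $\la = \min \sigma(H) > 0$ forces $\sigma(H)$ to consist of a nondecreasing sequence of eigenvalues $\la = \la_1 \le \la_2 \le \cdots$ diverging to $+\infty$, with an associated orthonormal basis $\{\phi_k\}_{k \in \N}$ of $L^2(X,m)$ consisting of eigenfunctions, $H\phi_k = \la_k \phi_k$. This is the classical spectral theorem for operators with compact inverse, and I would invoke it directly (or cite \cite{R_functional_analysis_book} as in Proposition \ref{prop_compact_embedding}).

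Next I would identify $\mc{F} = D[\sqrt H]$ and the form $\mc{E}$ in terms of this basis. Writing $w = \sum_k c_k \phi_k$ with $c_k = (w,\phi_k)_{L^2(X,m)}$, one has $w \in \mc{F} = D[\sqrt H]$ precisely when $\sum_k \la_k |c_k|^2 < \infty$, and in that case
\begin{equation*}
\mc{E}(w,w) = \norm{\sqrt H w}^2_{L^2(X,m)} = \sum_{k} \la_k |c_k|^2,
\qquad
\norm{w}^2_{L^2(X,m)} = \sum_k |c_k|^2.
\end{equation*}
Since $\la_k \ge \la = \la_1$ for every $k$, we get $\mc{E}(w,w) = \sum_k \la_k|c_k|^2 \ge \la \sum_k |c_k|^2 = \la \norm{w}^2_{L^2(X,m)}$, which is exactly \eqref{ineq_poinc} after dividing by $\la > 0$.

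There is no serious obstacle here; the only points requiring a little care are the justification that $\mc{E}(w,w) = \sum_k \la_k|c_k|^2$ on all of $\mc{F}$ (this is the standard characterization of the form domain of a nonnegative self-adjoint operator via its spectral measure, and holds because $\mc{F} = D[\sqrt H]$ and $\sqrt H \phi_k = \sqrt{\la_k}\,\phi_k$), and the preliminary step that $\sigma(H)$ is purely discrete so that the eigenbasis exists — both of which follow from compactness of $H^{-1}$ and are in the spirit of the surrounding results. Alternatively, and perhaps more in line with the paper's style, one can avoid spectral calculus entirely: take $w \in \mc{F}$, note that by the variational (min–max) characterization of the bottom of the spectrum, $\la = \min \sigma(H) = \inf_{v \in \mc{F}\setminus\{0\}} \mc{E}(v,v)/\norm{v}^2_{L^2(X,m)}$, and this infimum inequality applied to $v = w$ is precisely \eqref{ineq_poinc}. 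I would likely present this second, shorter argument, citing the standard min–max principle for the self-adjoint operator $H$ with compact resolvent.
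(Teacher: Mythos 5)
Your proposal is correct, but it takes a genuinely different route from the paper's. You invoke the spectral theorem (or equivalently the min--max / Rayleigh-quotient characterization of $\min\sigma(H)$) as a black box: given the compact resolvent, decompose $w = \sum_k c_k \phi_k$ in the eigenbasis, use $\mc{E}(w,w) = \sum_k \la_k |c_k|^2 \ge \la \sum_k |c_k|^2$, and you are done; or simply cite $\min\sigma(H) = \inf_{v\ne 0}\mc{E}(v,v)/\norm{v}^2_{L^2(X,m)}$. The paper instead \emph{proves} that Rayleigh-quotient identity from scratch by the direct method: it takes a minimizing sequence normalized in $L^2$, uses closedness of $(\mc{E},\mc{F})$ (so $(\mc{E}_1,\mc{F})$ is Hilbert) to extract a weak limit $v$, uses the compact embedding $\mc{F}\hookrightarrow L^2(X,m)$ from Proposition~\ref{prop_compact_embedding} to ensure $\norm{v}_{L^2(X,m)}=1$ is preserved, lower semicontinuity to show $v$ minimizes the quotient, then derives the Euler--Lagrange equation to identify the quotient value as an eigenvalue $\ge \la$, and finally evaluates the quotient at a $\la$-eigenfunction to get the reverse inequality. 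What the paper's route buys is self-containment: it leans only on the already-established Proposition~\ref{prop_compact_embedding} and elementary weak-convergence arguments, rather than quoting the spectral theorem for operators with compact resolvent or the min--max principle. Your route is shorter and perfectly standard; the one subtlety to make explicit, which you already flag, is the identification $\mc{F}=D[\sqrt H]$ and $\mc{E}(w,w)=\sum_k\la_k|c_k|^2$ for \emph{all} $w\in\mc{F}$ (not just $D[H]$), which follows from the definition \eqref{def_E} and the spectral calculus for $\sqrt H$.
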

\begin{proof}
Let us consider the minimization problem
\begin{equation}\label{min_Rel}
\inf \left\{\frac{\mc{E}(w,w)}{\norm{w}^2_{L^2(X,m)}}: w \in \mc{F}\setminus\{0\}\right\}.
\end{equation}
Let us show that there exists a minimizer  of  \eqref{min_Rel}.
Let $\{u_n\}_{n \in\mathbb{N}}$ be a minimizing sequence   and let 
\begin{equation}
v_n:=\frac{u_n}{\norm{u_n}_{L^2(X,m)}}.
\end{equation}
Then also  $\{v_n\}_{n \in\mathbb{N}}$ is a minimizing sequence and $\norm{v_n}_{L^2(X,m)}=1$. 
Since $\mathcal{E}$ is closed (hence $(\mathcal{E}_1,\mathcal{F})$ is Hilbert) and $\{v_n\}_n$ is bounded, up to a subsequence, we can assume that $v_n \rightharpoonup v$  weakly in the  norm $\mc{E}_1=\mc{E}+\ps{L^2(X,m)}{\cdot}{\cdot}$ as $n \to \infty$ to some $v \in \mc{F}$.

In particular  $\norm{v}_{L^2(X,m)}=1$ thanks to  Proposition \ref{prop_compact_embedding}.
By the lower semicontinuity of the norm $\mc{E}_1$ we have
\begin{align*}
 \mc{E}_1(v,v)\leq \liminf_{n\to \infty} \mc{E}_1(v_n,v_n)
\end{align*}
and so
\begin{equation*}
\mc{E}(v,v) \le\liminf_{n\to \infty} \mc{E}(v_n,v_n) .
\end{equation*}
We conclude that $v$ minimizes \eqref{min_Rel}. Hence, $v$ is a solution of the equation
\begin{equation*}
\mc{E}(v,w)=\frac{\mc{E}(v,v)}{\norm{v}^2_{L^2(X,m)}}\ps{L^2(X,m)}{v}{w} \quad \text{ for any } w \in \mc{F}.
\end{equation*}
It follows that $\frac{\mc{E}(v,v)}{\norm{v}^2_{L^2(X,m)}}$ is an eigenvalue of $H$ and so $\la \le \frac{\mc{E}(v,v)}{\norm{v}^2_{L^2(X,m)}}$.
Evaluating the ratio in \eqref{min_Rel} with an eigenfunction associated to $\la$ we obtain the opposite inequality thus concluding that
\begin{equation}
\inf \left\{\frac{\mc{E}(w,w)}{\norm{w}^2_{L^2(X,m)}}: w \in \mc{F}_\e\setminus\{0\}\right\}=\la,
\end{equation}
which proves \eqref{ineq_poinc}.
\end{proof}

We will use the following regularity result in  Subsection \ref{subsec_pseudo}.

\begin{proposition}\label{prop_regularity}
Let $(\mc{E}, \mc{F})$ be a positive, densely defined, symmetric and closed bilinear form and $H$ the correspondent densely defined, positive and self-adjoint operator. Let $f \in L^2(X,m)$ and suppose that $u \in \mc{F}$ solves the equation
\begin{equation}\label{eq_regularity}
\mc{E}(u,v)=\ps{L^2(X,m)}{f}{v} \quad \text{ for any } v \in \mc{F}.
\end{equation}
Then $u \in D[H]$ and $Hu=f$.
\end{proposition}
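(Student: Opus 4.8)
The plan is to exploit the representation \eqref{def_E} of the form through the square root of $H$, namely $\mc{E}(u,v)=\ps{L^2(X,m)}{\sqrt{H}u}{\sqrt{H}v}$ for $u,v\in\mc{F}=D[\sqrt{H}]$, together with the fact that $\sqrt{H}$ is itself a positive self-adjoint operator. First I would rewrite the hypothesis \eqref{eq_regularity} as
\begin{equation*}
\ps{L^2(X,m)}{\sqrt{H}u}{\sqrt{H}v}=\ps{L^2(X,m)}{f}{v}\qquad\text{for all }v\in D[\sqrt{H}].
\end{equation*}
By the definition of the adjoint, this identity says exactly that $w:=\sqrt{H}u$ belongs to $D[(\sqrt{H})^{*}]$ with $(\sqrt{H})^{*}w=f$; and since $\sqrt{H}$ is self-adjoint, $(\sqrt{H})^{*}=\sqrt{H}$, so $w=\sqrt{H}u\in D[\sqrt{H}]$ and $\sqrt{H}w=f$.

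It then remains to translate this into a statement about $H$ itself. Recalling that $H=(\sqrt{H})^{2}$ in the sense of the functional calculus, one has $D[H]=\{u\in D[\sqrt{H}]:\sqrt{H}u\in D[\sqrt{H}]\}$ and $Hu=\sqrt{H}(\sqrt{H}u)$ on this domain. The previous step shows that $u\in D[\sqrt{H}]$ (which is part of the hypothesis $u\in\mc{F}$) and that $\sqrt{H}u=w\in D[\sqrt{H}]$; hence $u\in D[H]$ and $Hu=\sqrt{H}w=f$, as required.

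The only delicate point is the use of the standard structural facts for positive self-adjoint operators, i.e. that $\sqrt{H}$ is self-adjoint and that both $D[H]$ and the action of $H$ factor through $\sqrt{H}$ as above; these are consequences of the spectral theorem and are precisely what makes \eqref{def_E} meaningful. If one wished to bypass the square root, an equivalent argument is to observe that $-1\in\rho(H)$ (as $H$ is positive), to rewrite \eqref{eq_regularity} as $\mc{E}_{1}(u,v)=\ps{L^2(X,m)}{f+u}{v}$ for all $v\in\mc{F}$, with $\mc{E}_1$ as in \eqref{def_E1}, and to invoke the first representation theorem (\cite[Theorem 1.3.1]{FM_Dirichlet_forms_book}) to get $u=(H+\mathrm{Id})^{-1}(f+u)$, whence $u\in D[H]$ and $(H+\mathrm{Id})u=f+u$, that is $Hu=f$. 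I expect the square-root route to be the shortest here, since \eqref{def_E} is already phrased in terms of $\sqrt{H}$.
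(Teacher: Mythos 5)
Your proof is correct, and it takes a genuinely different route from the one in the paper. You go through the adjoint: from
\begin{equation*}
\ps{L^2(X,m)}{\sqrt{H}v}{\sqrt{H}u}=\ps{L^2(X,m)}{v}{f}\qquad\text{for all }v\in D[\sqrt{H}],
\end{equation*}
the definition of the adjoint gives $\sqrt{H}u\in D[(\sqrt{H})^*]=D[\sqrt{H}]$ with $\sqrt{H}(\sqrt{H}u)=f$, and then the functional-calculus identification $D[H]=\{w\in D[\sqrt{H}]:\sqrt{H}w\in D[\sqrt{H}]\}$, $H=\sqrt{H}\circ\sqrt{H}$, yields $u\in D[H]$ and $Hu=f$ in two lines. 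The paper instead argues "by hand" with the semigroup $T_t=e^{-tH}$: it tests \eqref{eq_regularity} with $HT_t u$, derives the uniform bound $\|HT_{t/2}u\|_{L^2(X,m)}\le\|f\|_{L^2(X,m)}$, extracts a weak limit of $HT_{t_n/2}u$ as $t_n\to 0^+$, identifies it with $f$ through \eqref{eq_regularity}, upgrades to strong convergence via lower semicontinuity, and finally invokes the closedness of the graph of $H$ to conclude $u\in D[H]$. Your route is shorter and sharper because it leans directly on the self-adjointness of $\sqrt{H}$ and the factorization of $H$; it requires only the spectral-theoretic facts that the paper already uses to make \eqref{def_E} meaningful. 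The paper's semigroup argument is more elaborate, but it has the pedagogical advantage of staying entirely inside the Dirichlet-form toolbox (semigroups, closed graph) that Fukushima--Oshima--Takeda's framework emphasizes, which fits the bibliographic references the paper relies on elsewhere. Your alternative remark via $\mc{E}_1$ and the first representation theorem $(H+\mathop{\rm{Id}})^{-1}$ is also valid and is essentially Kato's representation theorem; it is a third route that would work equally well.
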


\begin{proof}
If $f=0$ then $u=0$. Hence, up to a renormalization, it is not restrictive to suppose that $\norm{f}_{L^2(X,m)}=1$.
Since $H$ is positive and self adjoint, by the Spectral Theorem, see \cite[Theorem 13.30]{R_functional_analysis_book}, the operators $T_t:=e^{-tH}$ are linear, continuous, and self- adjoint on $L^2(X,m)$ for any $t>0$. Furthermore, $T_t$ commutes with the powers $H^\alpha$ of $H$ for any positive $\alpha\in \mathbb{R}$ and also $H^\alpha T_t$ is linear, and continuous on $L^2(X,m)$ for any $t>0$. Finally $\{T_t\}_{t>0}$ forms a strongly continuous semigroup, see for example \cite[Lemma 1.3.2]{FM_Dirichlet_forms_book}.
In particular, for any $t>0$ we may test \eqref{eq_regularity} with $H T_tu$ thus obtaining 
\begin{equation}
\ps{L^2(X,m)}{f}{H T_tu}=\mc{E}(u,H T_tu)=  (\sqrt{H} u,\sqrt{H} H T_tu)_{L^2(X,m)}=\norm{H T_{\frac{t}{2}}u}_{L^2(X,m)}^2,
\end{equation}
in view of  \cite[Theorem 1.3.1]{FM_Dirichlet_forms_book}.
It follows that 
\begin{equation}
\norm{H T_{\frac{t}{2}}u}_{L^2(X,m)}  \le 1.
\end{equation}
In particular there exists a sequence $t_n\to 0^+$ and $w \in L^2(X,m)$ such that $H T_{\frac{t_n}{2}}u \rightharpoonup w$ weakly in $L^2(X,m)$ as $n \to \infty$.
Hence 
\begin{equation}\label{proof:prop_regularity_2}
\lim_{n \to \infty}\norm{H T_{\frac{t_n}{2}}u}_{L^2(X,m)}^2 =\ps{L^2(X,m)}{f}{w} \le \norm{w}_{L^2(X,m)}
\end{equation}
while, by the lower semicontinuity of the $L^2$- norm,
\begin{equation}\label{proof:prop_regularity_1}
\norm{w}_{L^2(X,m)} \le \lim_{n \to \infty}\norm{H T_{\frac{t_n}{2}}u}_{L^2(X,m)}.
\end{equation}
Furthermore, by \cite[Theorem 1.3.1]{FM_Dirichlet_forms_book}, for any $v \in \mc{F}$ we have that
\begin{equation}
\ps{L^2(X,m)}{w}{v}=\lim_{n \to \infty}(H T_{\frac{t_n}{2}}u, v)_{L^2(X,m)}=\lim_{n \to \infty}(T_{\frac{t_n}{2}}\sqrt{H}u,\sqrt{H} v)_{L^2(x,m)}= \mc{E}(u,v).
\end{equation}
Hence by \eqref{eq_regularity} we have that $w=f$ and in particular $\norm{w}_{L^2(X,m)}=1$. Then, thanks to \eqref{proof:prop_regularity_2} and \eqref{proof:prop_regularity_1}, we conclude that $ H T_{\frac{t_n}{2}} u \to w$ strongly in $L^2(X,m)$ as $n \to \infty$.
Since $H$ is self-adjoint, its graphs is closed (see for example  \cite[Theorem 13.9]{R_functional_analysis_book}). Hence $u \in D[H]$ and $f=w=Hu$.
\end{proof}

\subsection{Assumptions}\label{subsec_assumptions}
For any $\e \in [0,1]$ let $Z_\e$ be a linear subspace of $\mc{F}_\e$ that is closed with respect to the norm  $(\mc{E}_1^{(\e)})^{\frac{1}{2}}$, where $\mc{E}_1^{(\e)}$ is as in \eqref{def_E1}. Then let us  define 
\begin{equation}\label{def_Ee}
\overline{\mc{E}^{(\e)}}:=\mc{E}^{(\e)}_{|Z_\e \times Z_\e}.
\end{equation}
It is clear that $\overline{\mc{E}^{(\e)}}$ is a closed, positive definite and symmetric form with domain $Z_\e$.
If we define
\begin{equation}\label{def_mc_Ze}
\mc{Z}_\e:=\overline{Z_\e}^{\norm{\cdot}_{L^2(Y_\e,m_\e)}} 
\end{equation}
that is, $\mc{Z}_\e$ is  the closure of $Z_\e$ with respect to the norm of $L^2(Y_\e,m_\e)$ and we endow it with the norm induced by $L^2 (Y_\e,m)$, then clearly $\overline{\mc{E}^{(\e)}}$ is densely defined in $\mc{Z}_\e$.
Let $\overline{H_\e}$  be the associated  positive, densely definite and self adjoint operator, see \cite[Theorem 1.3.1]{FM_Dirichlet_forms_book}.
In what follows we suppose that for any $\e \in [0,1]$
\begin{equation}\label{A2}\tag{A1}
\begin{gathered}
\textnormal{the spectrum of  $\overline{H_\e}$ is discrete and it consists of a diverging}\\
\textnormal{sequence of non-negative eigenvalues $\{\la_{\e,n}\}_{n \in \mb{N}\setminus\{0\}}$,}
\end{gathered}
\end{equation}
where each eigenvalue is repeated according to its multiplicity.

Under the assumption \eqref{A2}, let $\{\phi_{\e,n}\}_{n \in \mb{N}\setminus\{0\}}$ be the associated basis of $\mc{Z}_\e$ given by the eigenfunctions of $\overline{H_\e}$. In  particular,
\begin{equation}\label{eq_phie_eigen}
\overline{\mc{E}^{(\e)}}(\phi_{n,\e}, w)= \la_{n,\e} \ps{L^2(Y_\e,m_\e)}{\phi_{n,\e}}{w} \quad \text{ for any } w \in Z_\e.
\end{equation}
Since we are interested in the quantitative spectral stability, it is natural to suppose that  
\begin{equation}\label{hp_limit_simple}\tag{A2}
\lim_{\e \to 0} \la_{\e,n}=\la_{0,n}, \quad \text{ for any } n \in \mathbb{N}\setminus \{0\}
\end{equation}
In addition, it is not restrictive to suppose that 
\begin{equation}\label{hp_la_1>0}\tag{A3}
\la_{\e,1}>0 \quad \text{ for any } \e \in[0,1].
\end{equation}
Indeed if $\la_{\e,1}=0$ then we can perform our analysis for the family of operators $\{\overline{H_\e}+\mathop{\rm{Id}}\}_{\e \in [0,1]}$.
Clearly $\{\overline{H_\e}+\mathop{\rm{Id}}\}_{\e \in [0,1]}$ is a family of densely defined, positive, self-adjoint operators that possesses the  same  properties of 
$\{\overline{H_\e}\}_{\e \in [0,1]}$. The spectrum of ${\overline{H_\e}+\mathop{\rm{Id}}}$ is given by $\{\la_{\e,n}+1\}_{n \in \mathbb{N}\setminus\{0\}}$.

It is a standard fact that, under the above assumptions, the operator $(\overline{H_\e})^{-1}$ is continuous, positive and self-adjoint. Moreover, in view of \cite[Theorem 12.30]{R_functional_analysis_book}, by \eqref{hp_la_1>0} the operator $(\overline{H_\e})^{-1}$ also turns out to be compact.

Fix a simple eigenvalue $\la_{0,n}$ and a correspondent eigenfunction $\phi_{0,n}$ For the sake of simplicity we write $\la_0$ instead of $\la_{0,n}$ and  $\phi_0$ instead of $\phi_{0,n}$.
We assume that
\begin{align}\label{hp_m_Ye}\tag{A4}
\norm{\phi_0}_{L^2(X,m)}=1 \quad \text{ and } \quad \lim_{\e \to 0}\int_{Y_\e} |\phi_0|^2 \ \textnormal{d}m_\e=1.
\end{align}
We suppose that the space $\mc{F}_0$ is included in any $\mc{F}_\e$ in the following sense
\begin{equation}\label{hp_uin_F0_u_in_Fe} \tag{A5}
   u \in \mathcal{F}_0\quad \Rightarrow \quad u_{|Y_\e} \in \mathcal{F}_\e.
\end{equation}
Furthermore, we assume that  for any $\e \in (0,1]$ there exists a linear functional $L_\e \in (\mc{F}_\e)^*$ 
such that 
\begin{equation}\label{eq_Vehi0_in_Ze}\tag{A6}
\mc{E}^{(\e)}(\phi_0,u)=\la_0 (\phi_0,u)_{L^2(Y_\e,m_\e)} +L_\e(u) \quad \text{ for any } u \in Z_\e,
\end{equation}
which is a well-posed assumption in view of \eqref{hp_uin_F0_u_in_Fe}.
For the sake of simplicity, we will still denote with $\mc{E}^{(\e)}$ the quadratic form 
\begin{equation}\label{def_Ee_quadratic}
\mc{E}^{(\e)}(w):=  \mc{E}^{(\e)}(w,w) \quad \text{ for any } \e \in [0,1] \text{ and } w \in \mc{F_\e}.
\end{equation}
Let us define for any $\e \in (0,1]$ the functional 
\begin{equation}\label{def_J_ef}
J_{\e}(u):= \frac{1}{2}\mc{E}^{(\e)}(u)- L_{\e}(u) \quad \text{ for any } u \in \mc{F}_\e.
\end{equation}

\begin{proposition}\label{prop_J_min}
For any $\e \in [0,1]$ there exists a unique $V_\e \in  \mc{F}_\e$ solving the minimization problem 
\begin{equation}\label{prob_J_min}
\inf\{J_\e(u):u \in Z_\e+{\phi_0}_{|Y_\e}\}.
\end{equation}
Furthermore, $V_\e$ is the unique solution to the equation 
\begin{equation}\label{eq_Ve}
\mc{E}^{(\e)}(V_\e,u)=L_\e(u)\quad  \text{ for any } u \in Z_\e
\end{equation}
such that $V_\e-{\phi_0}_{|Y_\e} \in Z_\e$. If 
\begin{equation}\label{hp_Ve_not_0}
L_\e \not\equiv 0 \text{ in } Z_\e \quad \text{ or } \quad {\phi_0}_{|Y_\e} \not \in Z_\e.
\end{equation}
then $V_\e \neq 0$.
\end{proposition}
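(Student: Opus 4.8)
The plan is to recognize the minimization problem \eqref{prob_J_min} as a standard convex variational problem over an affine subspace of the Hilbert space $(Z_\e, \overline{\mc{E}^{(\e)}})$, and to solve it via the direct method together with the Riesz representation theorem. First I would observe that $Z_\e$, equipped with the norm $(\mc{E}_1^{(\e)})^{1/2}$, is a Hilbert space (it is closed in $\mc{F}_\e$ by assumption), but more usefully that, thanks to the Poincaré inequality of Proposition \ref{prop_poin} applied to the form $\overline{\mc{E}^{(\e)}}$ on $\mc{Z}_\e$ — whose associated operator $\overline{H_\e}$ has $\la_{\e,1}>0$ by \eqref{hp_la_1>0} and compact inverse — the quantity $\overline{\mc{E}^{(\e)}}(w,w)^{1/2}$ is itself a norm on $Z_\e$ equivalent to $(\mc{E}_1^{(\e)})^{1/2}$. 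Hence $(Z_\e,\overline{\mc{E}^{(\e)}})$ is a Hilbert space. Next I would note that, by \eqref{hp_uin_F0_u_in_Fe}, the restriction ${\phi_0}_{|Y_\e}$ lies in $\mc{F}_\e$, so the affine set $Z_\e + {\phi_0}_{|Y_\e}$ is a nonempty closed affine subspace of $\mc{F}_\e$, and writing a generic competitor as $u = w + {\phi_0}_{|Y_\e}$ with $w \in Z_\e$, the functional becomes
\begin{equation*}
J_\e(w+{\phi_0}_{|Y_\e}) = \tfrac12 \overline{\mc{E}^{(\e)}}(w,w) + \mc{E}^{(\e)}({\phi_0}_{|Y_\e},w) - L_\e(w) + \left(\tfrac12 \mc{E}^{(\e)}({\phi_0}_{|Y_\e}) - L_\e({\phi_0}_{|Y_\e})\right).
\end{equation*}
The last parenthesis is a constant; using \eqref{eq_Vehi0_in_Ze} to rewrite $\mc{E}^{(\e)}({\phi_0}_{|Y_\e},w) = \la_0({\phi_0},w)_{L^2(Y_\e,m_\e)} + L_\e(w)$ (note $L_\e$ there is the functional of \eqref{eq_Vehi0_in_Ze}; the bookkeeping between the two $L_\e$'s must be done carefully, but they are designed to match), the two $L_\e(w)$ terms cancel and one is left with
\begin{equation*}
J_\e(w+{\phi_0}_{|Y_\e}) = \tfrac12 \overline{\mc{E}^{(\e)}}(w,w) + \la_0\,({\phi_0},w)_{L^2(Y_\e,m_\e)} + \text{const}.
\end{equation*}

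This is a strictly convex, coercive (by the Poincaré-type equivalence above, the quadratic part dominates and the linear term $w\mapsto \la_0({\phi_0},w)_{L^2(Y_\e,m_\e)}$ is continuous on $Z_\e$) functional on the Hilbert space $Z_\e$, so I would conclude existence and uniqueness of a minimizer $w=V_\e - {\phi_0}_{|Y_\e}$ either by the direct method (a minimizing sequence is bounded, hence weakly convergent along a subsequence, and weak lower semicontinuity of the quadratic part plus weak continuity of the linear part pass to the limit) or, more cleanly, by the Lax–Milgram/Riesz representation theorem applied to the bilinear form $\overline{\mc{E}^{(\e)}}$ and the bounded linear functional $w\mapsto L_\e(w)$ on $Z_\e$. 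The Euler–Lagrange equation of the minimization is exactly $\overline{\mc{E}^{(\e)}}(V_\e - {\phi_0}_{|Y_\e}, u) = -\la_0({\phi_0},u)_{L^2(Y_\e,m_\e)}$ for all $u\in Z_\e$, which by \eqref{eq_Vehi0_in_Ze} rearranges to $\mc{E}^{(\e)}(V_\e,u) = L_\e(u)$ for all $u \in Z_\e$, i.e. \eqref{eq_Ve}; conversely any solution of \eqref{eq_Ve} with $V_\e - {\phi_0}_{|Y_\e}\in Z_\e$ is the critical point of a strictly convex functional, hence the minimizer, giving uniqueness.

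For the final assertion, suppose for contradiction that $V_\e = 0$. Then $0 - {\phi_0}_{|Y_\e} \in Z_\e$, so ${\phi_0}_{|Y_\e}\in Z_\e$, which already negates the second alternative in \eqref{hp_Ve_not_0}; and plugging $V_\e=0$ into \eqref{eq_Ve} gives $L_\e(u) = \mc{E}^{(\e)}(0,u) = 0$ for all $u\in Z_\e$, i.e. $L_\e \equiv 0$ on $Z_\e$, negating the first alternative. Thus under \eqref{hp_Ve_not_0} we cannot have $V_\e=0$. The main obstacle I anticipate is not any single hard estimate but the careful treatment of two points: first, verifying that $\overline{\mc{E}^{(\e)}}^{1/2}$ really is a norm on $Z_\e$ equivalent to the ambient $(\mc{E}_1^{(\e)})^{1/2}$-norm so that coercivity holds — this is where Proposition \ref{prop_poin} enters and where one must check its hypotheses ($\la_{\e,1}>0$, compact inverse) are genuinely available for the restricted operator $\overline{H_\e}$ — and second, the consistent use of the functional $L_\e$ across \eqref{eq_Vehi0_in_Ze}, \eqref{def_J_ef} and \eqref{eq_Ve}, making sure the linear terms cancel as claimed; for $\e = 0$ one should separately note that $L_0$ can be taken to be $0$ (since ${\phi_0}$ is an eigenfunction, \eqref{eq_Vehi0_in_Ze} holds with $L_0\equiv0$), so $V_0 = \phi_0$ and the statement is consistent.
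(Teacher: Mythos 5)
Your proof is correct, and it takes a slightly different but essentially equivalent variational route from the paper's. The paper works with $J_\e$ directly on the affine set $Z_\e + {\phi_0}_{|Y_\e}$: it establishes coercivity from the crude estimate $J_\e(u) \ge \frac12 \mc{E}^{(\e)}(u) - \|L_\e\|_{(\mc{F}_\e)^*}(\mc{E}_1^{(\e)}(u))^{1/2}$, then invokes convexity and continuity for existence, and proves uniqueness by subtracting two solutions of \eqref{eq_Ve}, which lands in $Z_\e$ where \eqref{ineq_poinc} forces the difference to vanish. You instead shift to $w = u - {\phi_0}_{|Y_\e} \in Z_\e$ and use \eqref{eq_Vehi0_in_Ze} to observe that the $L_\e(w)$ terms cancel, so the problem collapses to minimizing $\frac12\overline{\mc{E}^{(\e)}}(w,w) + \la_0(\phi_0,w)_{L^2(Y_\e,m_\e)}$ over the Hilbert space $(Z_\e,\overline{\mc{E}^{(\e)}})$ — which Lax--Milgram or Riesz representation dispatches in one stroke, giving existence and uniqueness simultaneously. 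This is arguably cleaner and makes it transparent that the hidden quadratic structure lives on $Z_\e$, which is exactly where Proposition \ref{prop_poin} applies without quibble; the paper's phrasing (``$J_\e$ is coercive on $\mc{F}_\e$'') glosses over the fact that the relevant Poincar\'e constant is the one for $\overline{H_\e}$, so your reformulation is actually safer. Your treatment of the non-triviality claim is also different and simpler: you argue by pure contraposition ($V_\e = 0$ forces both $L_\e \equiv 0$ on $Z_\e$ and ${\phi_0}_{|Y_\e}\in Z_\e$), whereas the paper does a case split and, in the case ${\phi_0}_{|Y_\e}\in Z_\e$, exhibits a competitor with $J_\e(tw)<0$ for small $t$; both are fine, yours is tidier.

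One small inaccuracy in your closing remark: from $L_0 \equiv 0$ you cannot conclude $V_0 = \phi_0$. Equation \eqref{eq_Ve} with $L_0 = 0$ says $V_0$ is the unique element of $Z_0 + {\phi_0}_{|Y_0}$ that is $\mc{E}^{(0)}$-orthogonal to $Z_0$; in particular if ${\phi_0}_{|Y_0}\in Z_0$ (as happens e.g.\ when $Z_0=\mc{F}_0$) then $V_0 = 0$, not $\phi_0$, consistent with your own contrapositive. This side comment does not touch the core of your argument, which is sound.
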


\begin{proof}
Thanks to \eqref{ineq_poinc}, denoting by $||\cdot||_{Z_\e}$ the operator norm over $Z_\e$, it is enough to notice that 
\begin{equation}
J_\e(u) \ge \frac{1}{2}\mc{E}^{(\e)}(u)-(\mc{E}_1^{(\e)}(u))^{\frac{1}{2}}\norm{L_\e}_{(\mc{F}_\e)^*}\ge \frac{1}{2}\mc{E}^{(\e)}(u)-(1+\la_{\e,1})(\mc{E}^{(\e)}(u))^{\frac{1}{2}}\norm{L_\e}_{(\mc{F}_\e)^*}
\end{equation}
to see that $J_\e$ is coercive on $\mc{F}_\e$. Since $J_\e$ is also convex and  continuous on $\mc{F}_\e$ we obtain the existence of a minimizer $V_\e \in \mc{F}_\e$ of \eqref{prob_J_min} which solves \eqref{eq_Ve}. 

Furthermore, if $v_1$ and $v_2$ solve \eqref{eq_Ve} and $v_i-{\phi_0}_{|Y_\e} \in Z_\e$ for $i=1,2$, then $v:=v_1-v_2$ solves 
\begin{equation}
\mc{E}^{(\e)}(v,u)=0  \quad \text{ for any } u \in Z_\e.
\end{equation}
Since $v \in Z_\e$, testing with $v$ we conclude that $v=0$ by \eqref{ineq_poinc}.

Finally, if  ${\phi_0}_{|Y_\e} \not \in Z_\e$ then  clearly $V_\e$ is not trivial. On the other hand, suppose that ${\phi_0}_{|Y_\e} \in Z_\e$. Then we are minimizing the functional $J_\e$ on the linear space $Z_\e$. Since $L_\e\not\equiv  0$, there exists a function $w \in Z_\e$ such that $L_\e(w)>0$. Then, choosing $t>0$ small enough, $J_\e(tw)=t^2\mc{E}^{(\e)}(w)- tL_{\e}(w)<0$.
We conclude that $J_\e(V_\e) <0$ and so $V_\e \neq 0$.
\end{proof}

The last assumption of the present section is that 
\begin{equation}\label{hp_limit_E_e_Ve_o}\tag{A7}
\lim_{\e \to 0}\norm{V_\e}_{L^2(Y_\e,m_\e)}=0. 
\end{equation}
In view of \eqref{eq_Vehi0_in_Ze} and \eqref{eq_Ve}, this is a natural requirement since we are interested in stability results for the spectrum of $H_0$.

\section{Quantitative spectral stability}\label{sec_first_order_exp}
In this section we prove our main theorem and some addition results to quantify the asymptotic of the eigenvalues variation more explicitly. Furthermore, we discuss the validity of its assumption presenting a  criterion  for qualitative spectral stability.
\subsection{Asymptotic of the eigenvalues variation}\label{subsection_main_result}
Let $\la_0$ and $\phi_0$ be as in Section \ref{sec_assump_preli} and for the sake of simplicity let $\la_\e:=\la_{n,\e}.$
In view of \eqref{hp_limit_simple}, it is not restrictive to suppose that $\la_\e$ is simple for any $\e \in [0,1]$. Let $\phi_\e:=\phi_{n,\e}$ be a eigenfunction corespondent to the eigenvalue $\la_{\e}$ such that
\begin{equation}\label{hp_phie_normalized}
\norm{\phi_\e}^2_{L^2(Y_\e,m_\e)}=1.
\end{equation}
Let  us define the orthogonal projection 
\begin{equation}\label{def_Pie}
\Pi_\e:L^2(Y_\e,m_\e) \to Z_\e, \quad \Pi_\e(w)=\ps{L^2(Y_\e,m_\e)}{w}{\phi_\e} \phi_\e.
\end{equation}
Since $(\overline{\mc{E}^{(\e)}}, Z_\e)$ is closed, in view of  Proposition \ref{prop_poin} it follows that $Z_\e$ is a Hilbert space respect to the scalar product $\overline{\mc{E}^{(\e)}}$.
Let 
\begin{equation}\label{def_Re}
{R_\e}:=(\overline{{H}_\e})^{-1}: \mc{Z}_\e \to Z_\e.
\end{equation}
By the very definition of $\overline{\mc{E}^{(\e)}}$, it holds
\begin{equation}\label{eq_Re}
\overline{\mc{E}^{(\e)}}(R_\e u,v )=(u,v)_{L^2(Y_\e,m_\e)} \quad \text{ for any } u,v \in Z_\e
\end{equation}
and so if we endow $Z_\e$ with  the norm $\overline{\mc{E}^{(\e)}}^\frac{1}{2}$ then $R_\e$  is  linear and continuous. In particular, the restriction of $R_\e$ to $Z_\e$, which we still denote with $R_\e$, is linear and continuous as well.
By the Spectral Theorem for bounded, self-adjoint operators it follows that
\begin{equation}\label{ineq_dist_spec}
(\mathop{{\rm{dist}}}(\mu,\sigma(R_\e)) )^2
\le \frac{\overline{\mc{E}^{(\e)}}(R_\e w -\mu w)}{\overline{\mc{E}^{(\e)}}(w)}
\quad  \text{ for any } \mu \in \rho(R_\e) \text{ and } w \in Z_\e\setminus\{0\},
\end{equation}
see for example \cite[Proposition 8.20]{H_book_spectral}.

The proof of the following theorem is inspired by \cite[Appendix A]{ACF_spectral_stability_AB}, \cite[Theorem 2.5]{FNO_disa_Dirichlet_region},
\cite[Theorem 1.5]{AFN_fractional} and \cite[Theorem 5.2]{FNOS_AB_12}.
\begin{theorem}\label{theo_exp_1_order}
Let $\{H_\e\}_{\e\in [0,1]}$ be a family of positive, densely defined and self-adjoint operators
\begin{align}
    H_\e :D[H_\e]\subseteq L^1(Y_\e,m_\e)\to L^2(Y_\e,m_\e)
\end{align}
with associated bilinear forms $(\mc{E}^{(\e)},\mathcal{F}_\e)$. Fixed a simple eigenvalue $\lambda_0:=\lambda_{0,n}$ of $H_0$ with associated normalized eigenfunction $\phi$, let $\la_\e:=\la_{\e,n}$.
Suppose that all the assumptions \eqref{A2} to \eqref{hp_limit_E_e_Ve_o} hold.

Then,
\begin{equation}\label{eq_asymptotic_eigenvlaues_1_order}
\la_\e-\la_0=\frac{\la_0\int_{Y_\e} \phi_0 V_\e \ \textnormal{d}m_\e +O\left(\norm{V_\e}^2_{L^2(Y_\e,m_\e)}\right)}{\int_{Y_\e} |\phi_0|^2 \, \textnormal{d}m_\e+O(\norm{V_\e}_{L^2(Y_\e,m_\e)})}
\quad \text{ as  } \e \to 0,
\end{equation}
where $V_\e$ is the function provided by Proposition \ref{prop_J_min}. Furthermore,
\begin{align}
\mc{E}^{(\e)}(\phi_0-V_\e-\Pi_\e(\phi_0-V_\e))=O\left(\norm{V_\e}^2_{L^2(Y_\e,m_\e)}\right)
\quad \text{ as }\e \to 0,\label{eq_eigenfunctions_Ve}
\end{align}
and
\begin{align}
&\norm{\phi_0-\Pi_\e(\phi_0-V_\e)}_{L^2(Y_\e,m_\e)}=O\left(\norm{V_\e}_{L^2(Y_\e,m_\e)}\right), \label{eq_eigenfunctions}\\ \ \\
&\mc{E}^{(\e)}(\phi_0-\Pi_\e(\phi_0-V_\e))=\mc{E}^{(\e)}(V_\e)+O\left(\norm{V_\e}_{L^2(Y_\e,m_\e)}(\mc{E}^{(\e)}(V_\e))^{\frac{1}{2}}\right)=O\left(\mc{E}^{(\e)}(V_\e)\right), \label{eq_eigenfunctions_E}
\end{align}
as $\e\to 0$.
\end{theorem}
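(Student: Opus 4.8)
The plan is to exploit the variational characterizations of $\la_\e$ and of $V_\e$ together with the spectral-distance estimate \eqref{ineq_dist_spec} applied to the compact resolvent $R_\e$. The guiding heuristic is that $\phi_0-V_\e$ (restricted to $Y_\e$) is an approximate eigenfunction of $\overline{H_\e}$ with approximate eigenvalue $\la_0$: indeed, subtracting \eqref{eq_Ve} from \eqref{eq_Vehi0_in_Ze} gives, for every $u\in Z_\e$,
\begin{equation}\label{plan_approx_eig}
\mc{E}^{(\e)}(\phi_0-V_\e,u)=\la_0\,(\phi_0,u)_{L^2(Y_\e,m_\e)}=\la_0\,(\phi_0-V_\e,u)_{L^2(Y_\e,m_\e)}+\la_0\,(V_\e,u)_{L^2(Y_\e,m_\e)},
\end{equation}
so the defect is controlled purely by $\norm{V_\e}_{L^2(Y_\e,m_\e)}$. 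First I would translate this into the language of $R_\e$: set $w_\e:=\phi_0-V_\e$ (or rather its projection onto $Z_\e$, to be handled carefully since $\phi_0$ need not lie in $Z_\e$), rewrite \eqref{plan_approx_eig} as $\overline{\mc{E}^{(\e)}}(w_\e-\la_0 R_\e w_\e,u)$ being $O(\norm{V_\e})$ in the dual norm, and feed $\mu:=\la_0^{-1}$ and $w:=w_\e$ into \eqref{ineq_dist_spec}. This shows $\mathrm{dist}(\la_0^{-1},\sigma(R_\e))=O(\norm{V_\e}_{L^2})$, hence by \eqref{hp_limit_simple} and the simplicity of $\la_0$ there is exactly one eigenvalue of $\overline{H_\e}$, namely $\la_\e$, with $|\la_\e^{-1}-\la_0^{-1}|=O(\norm{V_\e}_{L^2})$, and correspondingly $\phi_\e$ is $\overline{\mc{E}^{(\e)}}$-close to the normalized version of $w_\e$.

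Next I would make the eigenfunction comparison quantitative. Decompose $w_\e=\Pi_\e w_\e+(w_\e-\Pi_\e w_\e)$ where $\Pi_\e$ is the projection \eqref{def_Pie} onto $\mathrm{span}(\phi_\e)$. The component orthogonal to $\phi_\e$ is, in $\overline{\mc{E}^{(\e)}}$-norm, controlled by the spectral gap of $\overline{H_\e}$ around $\la_\e$ (which stays bounded away from the other eigenvalues by \eqref{hp_limit_simple}) times the defect from \eqref{plan_approx_eig}; this is precisely the mechanism that yields \eqref{eq_eigenfunctions_Ve}, after noting $\phi_0-V_\e-\Pi_\e(\phi_0-V_\e)=w_\e-\Pi_\e w_\e$ up to the care needed with the $Z_\e$-projection of $\phi_0$. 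For \eqref{eq_eigenfunctions} I would use that $\norm{w_\e}_{L^2}=\norm{\phi_0-V_\e}_{L^2}\to 1$ and that $\Pi_\e w_\e$ is a scalar multiple of $\phi_\e$; combining the $L^2$-smallness of $w_\e-\Pi_\e w_\e$ (from \eqref{eq_eigenfunctions_Ve} via Poincaré \eqref{ineq_poinc}) with $\norm{V_\e}_{L^2}\to 0$ gives $\norm{\phi_0-\Pi_\e(\phi_0-V_\e)}_{L^2}=\norm{V_\e+(w_\e-\Pi_\e w_\e)}_{L^2}=O(\norm{V_\e}_{L^2})$. For \eqref{eq_eigenfunctions_E}, expand $\mc{E}^{(\e)}(\phi_0-\Pi_\e(\phi_0-V_\e))=\mc{E}^{(\e)}(V_\e+(w_\e-\Pi_\e w_\e))$ bilinearly; the cross term is bounded by Cauchy–Schwarz in $\mc{E}^{(\e)}$ using \eqref{eq_eigenfunctions_Ve}, and the term $\mc{E}^{(\e)}(w_\e-\Pi_\e w_\e)$ is itself $O(\norm{V_\e}^2_{L^2})$, hence negligible against $\mc{E}^{(\e)}(V_\e)$; note $\mc{E}^{(\e)}(V_\e)\gtrsim\norm{V_\e}^2_{L^2}$ by Poincaré but also $\mc{E}^{(\e)}(V_\e)=L_\e(V_\e)$ from \eqref{eq_Ve}, and one should check $\norm{V_\e}_{L^2}=O(\mc{E}^{(\e)}(V_\e)^{1/2})$ makes the error terms consistent with the claimed form.

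Finally, for the eigenvalue expansion \eqref{eq_asymptotic_eigenvlaues_1_order} itself I would use the Rayleigh quotient: $\la_\e=\mc{E}^{(\e)}(\phi_\e)/\norm{\phi_\e}^2_{L^2}=\mc{E}^{(\e)}(\phi_\e)$ by \eqref{hp_phie_normalized}, and write $\phi_\e=c_\e^{-1}\Pi_\e(\phi_0-V_\e)$ where $c_\e=(\phi_0-V_\e,\phi_\e)_{L^2}$. Using \eqref{eq_phie_eigen} with $w=\phi_0-V_\e$ restricted to $Z_\e$ and \eqref{plan_approx_eig}, one computes
\begin{equation}\label{plan_rayleigh}
\la_\e\,(\phi_\e,\phi_0-V_\e)_{L^2}=\mc{E}^{(\e)}(\phi_\e,\phi_0-V_\e)=\la_0(\phi_0,\phi_\e)_{L^2}+\text{(defect terms)},
\end{equation}
which after dividing and carefully tracking that the numerator picks up $\la_0\int_{Y_\e}\phi_0 V_\e\,\textnormal{d}m_\e$ plus $O(\norm{V_\e}^2_{L^2})$ and the denominator is $\int_{Y_\e}|\phi_0|^2\,\textnormal{d}m_\e+O(\norm{V_\e}_{L^2})$, yields the stated formula; here one also invokes \eqref{hp_m_Ye} to know the denominator tends to $1$. \textbf{The main obstacle} I anticipate is bookkeeping around the fact that $\phi_0$ restricted to $Y_\e$ generally does \emph{not} belong to $Z_\e$ — so $\phi_0-V_\e$ is the right object to project, and every identity coming from \eqref{eq_Vehi0_in_Ze}, \eqref{eq_Ve}, \eqref{eq_phie_eigen} is only valid when tested against $u\in Z_\e$, never against $\phi_0$ itself. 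Keeping the $Z_\e$-admissible test functions straight while extracting the sharp $O(\norm{V_\e}^2_{L^2})$ (rather than merely $O(\norm{V_\e}_{L^2})$) in the numerator, and verifying that $c_\e\to 1$ so that dividing by $c_\e$ is legitimate, is the delicate part; everything else is Cauchy–Schwarz, the Poincaré inequality \eqref{ineq_poinc}, and the spectral estimate \eqref{ineq_dist_spec}.
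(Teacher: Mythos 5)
Your plan follows essentially the same route as the paper's proof: set $\psi_\e=\phi_0-V_\e\in Z_\e$, derive the defect identity \eqref{plan_approx_eig} from \eqref{eq_Vehi0_in_Ze} and \eqref{eq_Ve}, feed $\mu_0=\la_0^{-1}$ and $\psi_\e$ into the spectral-distance estimate \eqref{ineq_dist_spec} to get $|\la_\e-\la_0|=O(\norm{V_\e}_{L^2})$, restrict $R_\e$ to the orthogonal complement of $\phi_\e$ and use the spectral gap to bound $\overline{\mc{E}^{(\e)}}(\psi_\e-\Pi_\e\psi_\e)=O(\norm{V_\e}^2_{L^2})$, and then obtain \eqref{eq_asymptotic_eigenvlaues_1_order} from the bilinear identity tested against $\Pi_\e\psi_\e$ (your normalization by $c_\e$ is equivalent to the paper's division by $(\psi_\e,\Pi_\e\psi_\e)_{L^2}=|c_\e|^2$). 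The one small imprecision is the phrase that $\mc{E}^{(\e)}(\psi_\e-\Pi_\e\psi_\e)$ is ``negligible against'' $\mc{E}^{(\e)}(V_\e)$ — it is only $O(\mc{E}^{(\e)}(V_\e))$, not $o(\mc{E}^{(\e)}(V_\e))$, which is why \eqref{eq_eigenfunctions_E} has the explicit error $O(\norm{V_\e}_{L^2}\mc{E}^{(\e)}(V_\e)^{1/2})$ rather than a clean leading term; but you flag the needed Poincar\'e comparison at the end, so the bookkeeping is sound.
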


\begin{proof}
Suppose that $V_\e=0$. Then $L_\e=0$ and ${\phi_0}_{|Y_\e} \in Z_\e$. It follows that ${\phi_0}_{|Y_\e}$ is an eigenfunction of $\mc{E}_\e$. Hence by \eqref{hp_limit_simple}, for $\e$ small enough, $\la_\e=\la_0$. In conclusion \eqref{eq_asymptotic_eigenvlaues_1_order} holds trivially.
Therefore it is not restrictive to suppose that $V_\e \neq 0$ for any $ \e \in (0,1]$.
 
Let $\psi_\e:=\phi_0-V_\e$.  
Then $\psi_\e \in Z_\e$ by Proposition \ref{prop_J_min} and 
\begin{equation}\label{eq_psie}
\overline{\mc{E}^{(\e)}}(\psi_\e, w)=\la_0\ps{L^2(Y_\e,m_\e)}{\phi_0}{w}  \quad\text{for all } w  \in Z_\e,
\end{equation}
in view of  \eqref{eq_Vehi0_in_Ze} and \eqref{eq_Ve}.
From \eqref{eq_psie} we deduce that 
\begin{equation}\label{proof:_heo_exp_1_order:1}
\overline{\mc{E}^{(\e)}}(\psi_\e, w)-\la_0\ps{L^2(Y_\e,m_\e)}{\psi_\e}{w}= \la_0\ps{L^2(Y_\e,m_\e)}{V_\e}{w} 
\quad\text{for all } w\in Z_\e.
\end{equation}
Since $\phi_{\e} \in Z_\e$, we may choose 
\begin{align}
    w:=\Pi_\e \psi_\e=(\psi_\e, \phi_\e)_{L^2(Y_\e,m_\e)}\phi_\e
\end{align}
in \eqref{proof:_heo_exp_1_order:1}, thus obtaining  \begin{equation}\label{proof_heo_exp_1_order:2}
(\la_\e-\la_0)\ps{L^2(Y_\e,m_\e)}{\psi_\e}{\Pi_\e \psi_\e}=
\la_0\ps{L^2(Y_\e,m_\e)}{V_\e}{\phi_0}+\la_0\ps{L^2(Y_\e,m_\e)}{V_\e}{\Pi_\e \psi_\e-\phi_0}. 
\end{equation}
by \eqref{eq_phie_eigen}.

Now we study the asymptotics, as $\e \to 0$, of each term in
\eqref{proof_heo_exp_1_order:2}. For the sake of simplicity, we divide
the rest of the proof into several steps.

\smallskip\noindent\textbf{Step 1.} We claim that  
\begin{equation}\label{eq_step1}
|\la_\e-\la_0|=O(\norm{V_\e}_{L^2(Y_\e,m_\e)}), \quad \text{as  }\e \to 0.
\end{equation}
Letting $\mu_0:=\la_0^{-1}$ and $\mu_\e:=\lambda_\e^{-1}$, since $\la_0$ is simple
and $\la_\e \to \la_0$ by \eqref{hp_limit_simple}, for $\e$ small enough
\begin{equation}\label{proof:_heo_exp_1_order:2}
\begin{split}|\la_\e-\la_0| & =\lambda_\e \lambda_0 |\mu_\e-\mu_0|\\
&\le 2
\lambda_0^2
\mathop{\rm{dist}}(\mu_0,\sigma(R_\e))\\
&\le 2 \lambda_0^2 \, \left(\frac{\overline{\mc{E}^{(\e)}}(R_\e \psi_\e-\mu_0 \psi_\e)}{\overline{\mc{E}^{(\e)}}(\psi_\e)}\right)^{\!1/2},
\end{split}
\end{equation}
thanks to \eqref{ineq_dist_spec}.
Since  $\norm{\phi_0}_{L^2(X,m)}=1$, by \eqref{eq_psie} and the Cauchy-Schwarz inequality we have 
\begin{equation}\label{proof:_heo_exp_1_order:3}
\begin{split}
\overline{\mc{E}^{(\e)}}(\psi_\e)&=\la_0\ps{L^2(Y_\e,m_\e)}{\phi_0}{\psi_\e} \\
&=\la_0-\lambda_0\left(1- \int_{Y_\e} |\phi_0|^2 \, \textnormal{d}m_\e\right) -
\la_0\int_{Y_\e} \phi_0 V_\e \, \textnormal{d}m_\e\\
&= \la_0 +o(1),
\end{split}
\end{equation}
where in the last equality we have used \eqref{hp_m_Ye}. By  \eqref{eq_Re} and \eqref{eq_psie}
tested with $R_\e \psi_\e-\mu_0 \psi_\e$, 
\begin{align*}
\overline{\mc{E}^{(\e)}}&(R_\e \psi_\e -\mu_0  \psi_\e)\\
&=-\ps{L^2(Y_\e,m_\e)}{V_\e}{R_\e \psi_\e-\mu_0 \psi_\e}  +\ps{L^2(Y_\e,m_\e)}{\phi_0}{R_\e \psi_\e-\mu_0 \psi_\e}-\overline{\mc{E}^{(\e)}}(\mu_0 \psi_\e,R_\e \psi_\e-\mu_0 \psi_\e)\\
&=-\ps{L^2(Y_\e,m_\e)}{V_\e}{R_\e \psi_\e-\mu_0 \psi_\e}.
\end{align*}
Hence, by the Cauchy-Schwarz inequality and Proposition \ref{prop_poin},
\begin{align*}
    \overline{\mc{E}^{(\e)}}(R_\e \psi_\e -\mu_0  \psi_\e) & =-\ps{L^2(Y_\e,m_\e)}{V_\e}{R_\e \psi_\e-\mu_0 \psi_\e}\\
    &\leq ||V_\e||_{L^2(Y_\e,m_\e)}\ ||R_\e \psi_\e-\mu_0 \psi_\e||_{L^2(Y_\e,m_\e)}\\
    &\leq ||V_\e||_{L^2(Y_\e,m_\e)}\ (\mu\  \overline{\mc{E}^{(\e)}}(R_\e \psi_\e-\mu_0 \psi_\e))^{\frac{1}{2}}.
\end{align*}
Hence
\begin{equation}\label{proof:_heo_exp_1_order:4}
\big(\overline{\mc{E}^{(\e)}}(R_\e \psi_\e-\mu_0  \psi_\e)\big)^{1/2}=O(\norm{V_\e}_{L^2(Y_\e,m_\e)})\quad\text{as }\e\to0.
\end{equation}
Claim \eqref{eq_step1} follows from \eqref{proof:_heo_exp_1_order:2}, \eqref{proof:_heo_exp_1_order:3}, and \eqref{proof:_heo_exp_1_order:4}.

\smallskip\noindent\textbf{Step 2.} We claim that  
\begin{equation}\label{eq_step2}
\overline{\mc{E}^{(\e)}}(\psi_\e-\Pi_\e\psi_\e)=O\left(\norm{V_\e}^2_{L^2(Y_\e,m_\e)}\right)  \quad \text{as  }\e \to 0.
\end{equation}
Let 
\begin{equation}\label{proof:_heo_exp_1_order:5}
\chi_\e:=\psi_\e-\Pi_\e\psi_\e \quad \text{and}  \quad \xi_\e:=R_\e \chi_\e-\mu_\e\chi_\e.
\end{equation}
By definition we have 
\begin{equation*}
\chi_\e \in N_\e:=\{w \in Z_\e:\ps{L^2(Y_\e,m_\e)}{w}{\phi_\e}=0\}
\end{equation*}
and from \eqref{eq_phie_eigen} and   \eqref{eq_Re} it follows that $R_\e w \in N_\e$ for all $w \in  N_\e$. Hence the operator 
\begin{equation*}
\widetilde{R}_\e:={R_\e}_{|N_\e}:N_\e\to N_\e
\end{equation*} 
is well-defined. Furthermore, $\sigma(\widetilde{R}_\e)=\sigma(R_\e)\setminus\{\mu_\e\}$. In particular, there
exists a constant $K>0$, which does not depends on $\e$, such that
$\big(\mathop{\rm{dist}}(\mu_\e,\sigma(\widetilde{R}_\e))\big)^2 \ge K$ (for $\e$ small enough). Moreover, \eqref{ineq_dist_spec} holds for ${\widetilde{R}}_\e$. Then
\begin{equation}\label{eq:estpsi-pi}
\begin{split}
\overline{\mc{E}^{(\e)}}(\chi_\e)&\le \frac{1}{K}\big(\mathop{\rm{dist}} (\mu_\e,\sigma(\widetilde{R}_\e))\big)^2\ 
\overline{\mc{E}^{(\e)}}(\chi_\e)  \\
&\le \frac{1}{K}\,\overline{\mc{E}^{(\e)}}(\widetilde{R}_\e \chi_\e-\mu_\e\chi_\e)\\
&=\frac{1}{K}\,\overline{\mc{E}^{(\e)}}(\xi_\e),    
\end{split}
\end{equation}
by \eqref{proof:_heo_exp_1_order:5}. In view of  \eqref{eq_psie} and \eqref{eq_phie_eigen} tested with $\xi_\e$ 
\begin{equation}\label{proof_heo_exp_1_order:6}
\overline{\mc{E}^{(\e)}}(\chi_\e,\xi_\e)-\la_\e\ps{L^2(Y_\e,m_\e)}{\chi_\e}{\xi_\e}=\la_{0}\ps{L^2(Y_\e,m_\e)}{V_\e}{\xi_\e}
+(\la_0-\la_\e)\ps{L^2(Y_\e,m_\e)}{\psi_\e}{\xi_\e}.
\end{equation}
Then from \eqref{eq_Re} and \eqref{proof_heo_exp_1_order:6} we
deduce that
\begin{align*}
  \overline{\mc{E}^{(\e)}}(\xi_\e)&=\overline{\mc{E}^{(\e)}}(R_\e \chi_\e,\xi_\e)-\mu_\e \overline{\mc{E}^{(\e)}}(\chi_\e,\xi_\e)\\
  &=-\mu_\e[ \overline{\mc{E}^{(\e)}}(\chi_\e,\xi_\e)-\la_\e \overline{\mc{E}^{(\e)}}(R_\e \chi_\e,\xi_\e)]\\
  &=-\frac{\la_{0}}{\la_\e}\ps{L^2(Y_\e,m_\e)}{V_\e}{\xi_\e}-\frac{(\la_0-\la_\e)}{\la_\e}
          \ps{L^2(Y_\e,m_\e)}{\psi_\e}{\xi_\e}.
\end{align*}
From the Cauchy-Schwarz inequality and Proposition \ref{prop_poin} it follows that
\begin{align*}
    \overline{\mc{E}^{(\e)}}(\xi_\e)\leq \frac{\lambda_0}{\lambda_\e^2} ||V_\e||_{L^2(Y_\e,m_\e)}(\overline{\mc{E}^{(\e)}}(\xi_\e))^{\frac{1}{2}}+\frac{|\lambda_\e-\lambda_0|}{\lambda_\e^2}||\psi_\e||_{L^2(Y_\e,m_\e)}(\overline{\mc{E}^{(\e)}}(\xi_\e))^{\frac{1}{2}}
\end{align*}
and hence, by \eqref{hp_limit_simple},
\begin{equation}\label{proof_heo_exp_1_order:7}
(\overline{\mc{E}^{(\e)}}(\xi_\e))^{\frac{1}{2}} 
\le C\left(\norm{V_\e}_{L^2(Y_\e,m_\e)}+|\la_\e-\la_0|\norm{\psi_\e}_{L^2(Y_\e,m_\e)}\right)
\end{equation}
for some constant $C>0$ which does not depend on $\e$. Furthermore,  \eqref{hp_limit_E_e_Ve_o} and \eqref{hp_m_Ye} yield 
\begin{equation*}
  \norm{\psi_\e}_{L^2(Y_\e,m_\e)}^2-1=o(1) \quad  \text{as } \e \to 0.
\end{equation*}
Then \eqref{eq_step2} follows from  \eqref{eq_step1}, \eqref{eq:estpsi-pi}, and \eqref{proof_heo_exp_1_order:7}. Since $\psi_\e= \phi_0-V_\e$ we have proved \eqref{eq_eigenfunctions_Ve}.

\smallskip\noindent\textbf{Step 3.} We claim that 
\begin{equation}\label{eq_step3}
\norm{\phi_0-\Pi_\e \psi_\e}_{L^2(Y_\e,m_\e)}=O(\norm{V_\e}_{L^2(Y_\e,m_\e)})\quad \text{ as  }\e \to 0.
\end{equation}
Indeed  from the definition of $\psi_\e$, Proposition \ref{prop_poin} and \eqref{eq_step2} it follows that
\begin{equation}\label{proof_heo_exp_1_order:9}
\begin{split}
\norm{\phi_0-\Pi_\e\psi_\e}_{L^2(Y_\e,m_\e)}& \le  \norm{\phi_0-\psi_\e}_{L^2(Y_\e,m_\e)}+  \norm{\psi_\e-\Pi_\e\psi_\e}_{L^2(Y_\e,m_\e)}\\
&=O(\norm{V_\e}_{L^2(Y_\e,m_\e)}) \quad \text{as  }\e \to 0.
\end{split}
\end{equation}
In particular we have proved \eqref{eq_eigenfunctions}. Furthermore, since $\phi_0=V_\e+\psi_\e $,
\begin{multline}
\mc{E}^{(\e)}(\phi_0-\Pi_\e\psi_\e)= \mc{E}^{(\e)}(V_\e)+ \mc{E}^{(\e)}(\psi_\e-\Pi_\e\psi_\e)+ 2\mc{E}^{(\e)}(V_\e,\psi_\e-\Pi_\e\psi_\e)\\
=\mc{E}^{(\e)}(V_\e)+O(\norm{V_\e}_{L^2(Y_\e,m_\e)}(\mc{E}^{(\e)}(V_\e))^{\frac{1}{2}})= O(\mc{E}^{(\e)}(V_\e)), \quad \text{as  }\e \to 0,  
\end{multline}
thanks to the Cauchy-Schwarz inequality, Proposition \ref{prop_poin} and \eqref{eq_step2}. Hence we have proved \eqref{eq_eigenfunctions_E}.

\smallskip\noindent\textbf{Step 4.} We claim that 
\begin{equation}\label{eq_step4}
\ps{L^2(Y_\e,m_\e)}{\psi_\e}{\Pi_\e\psi_\e}=\int_{Y_\e} |\phi_0|^2 \, \textnormal{d}m_\e +
O(\norm{V_\e}_{L^2(Y_\e,m_\e)}) \quad \text{as  }\e \to 0.
\end{equation}
We have that 
\begin{equation}
\begin{split}
\ps{L^2(Y_\e,m_\e)}{\psi_\e}{\Pi_\e\psi_\e}&=\ps{L^2(Y_\e,m_\e)}{\psi_\e-\Pi_\e\psi_\e}{\Pi_\e\psi_\e}+\norm{\Pi_\e\psi_\e}^2_{L^2(Y_\e,m_\e)}\\
&=\norm{\Pi_\e\psi_\e}^2_{L^2(Y_\e,m_\e)}.    
\end{split}
\end{equation}
Since $\norm{\phi_0}_{L^2(Y_\e,m_\e)}\xrightarrow[]{\e \to 0}1$, \eqref{proof_heo_exp_1_order:9} and the Cauchy-Schwarz inequality imply that
\begin{equation}\label{proof_heo_exp_1_order:10}
\begin{split}
\norm{\Pi_\e\psi_\e}^2_{L^2(Y_\e,m_\e)}&=
\norm{\phi_0-\Pi_\e\psi_\e}^2_{L^2(Y_\e,m_\e)}+  \norm{\phi_0}_{L^2(Y_\e,m_\e)}^2-2 \ps{L^2(Y_\e,m_\e)}{\phi_0-\Pi_\e\psi_\e}{\phi_0}\\
&=\int_{Y_\e} |\phi_0|^2 \, \textnormal{d}m_\e +O(\norm{V_\e}_{L^2(Y_\e,m_\e)}) \quad \text{as  }\e \to 0.    
\end{split}
\end{equation}
Putting together \eqref{proof_heo_exp_1_order:2}, \eqref{eq_step3} and \eqref{eq_step4}, we finally obtain
\begin{align*}
\la_\e-\la_0 &=\frac{\la_0\int_{Y_\e} \phi_0 V_\e \, \textnormal{d}m_\e +O\left(\norm{V_\e}^2_{L^2(Y_\e,m_\e)}\right)}{\int_{Y_\e} |\phi_0|^2 \, \textnormal{d}m_\e+O(\norm{V_\e}_{L^2(Y_\e,m_\e)})} \quad \text{as }\e \to 0,
\end{align*}
thus proving \eqref{eq_asymptotic_eigenvlaues_1_order}.
\end{proof}

\begin{remark}
It is also possible to prove \eqref{eq_asymptotic_eigenvlaues_1_order} and \eqref{eq_eigenfunctions} following the  approach exposed in \cite[Proof of Theorem 2.3]{FLO_holes_Neumann}. It is  based on a abstract lemma originally  proved  in \cite{C_lemma} and then revisited in \cite{C_mainfols_holes} and \cite{ACM_multiple_holes}. See \cite[Lemma 7.1]{FLO_holes_Neumann} for a simplified version of this lemma  suitable for simple eigenvalues and for a short proof.
However, following this approach we would only obtain 
\begin{equation}\label{eq_eigenfunctions_Ve_weak}
\norm{\phi_0-V_\e-\Pi_\e(v_0-V_\e)}_{L^2(Y_\e,m_\e)}=O\left(\norm{V_\e}_{L^2(Y_\e,m_\e)}\right),
\quad \text{ as }\e \to 0,
\end{equation}
which is a slightly weaker version of \eqref{eq_eigenfunctions_Ve}. To recover \eqref{eq_eigenfunctions_Ve}  some additional work is required, see \cite[Proof of Theorem 2.3]{FLO_holes_Neumann}.
\end{remark}

\begin{remark}\label{Rmk:Accurate1Exp}
It is worth noticing that, as $\e \to 0$, the quotient in \eqref{eq_asymptotic_eigenvlaues_1_order} can be written as
\begin{align*}
&\frac{\la_0\int_{Y_\e} \phi_0 V_\e \ \textnormal{d}m_\e +O\left(\norm{V_\e}^2_{L^2(Y_\e,m_\e)}\right)}{\int_{Y_\e} |\phi_0|^2 \, \textnormal{d}m_\e+O(\norm{V_\e}_{L^2(Y_\e,m_\e)})}\\
&=\left( \la_0\int_{Y_\e} \phi_0 V_\e \ \textnormal{d}m_\e +O\left(\norm{V_\e}^2_{L^2(Y_\e,m_\e)}\right) \right)\left(1+\frac{1-\int_{Y_\e} |\phi_0|^2 \ \textnormal{d}m_\e}{\int_{Y_\e} |\phi_0|^2 \, \textnormal{d}m_\e}+O(\norm{V_\e}_{L^2(Y_\e,m_\e)})\right)\\
&=\la_0\int_{Y_\e} \phi_0 V_\e \ \textnormal{d}m_\e +o\left(\norm{V_\e}_{L^2(Y_\e,m_\e)}\right),
\end{align*}
by the Cauchy-Schwarz inequality. Hence, if the rate of convergence of $||\phi_0||_{L^2(Y_\e,m_\e)}\xrightarrow{\e\to 0}1$ can be precisely quantified, a more accurate estimate on the asymptotic behavior of $\lambda_0-\lambda_\e$ can be recovered. In particular, if
\begin{align*}
1-\int_{Y_\e} |\phi_0|^2 \ \textnormal{d}m_\e=O\left(||V_\e||_{L^2(Y_\e,m_\e)}\right)\quad \textnormal{as}\ \e \to 0,
\end{align*}
then by the Cauchy-Schwarz inequality
\begin{align}
 \lambda_\e-\lambda_0 = \la_0\int_{Y_\e} \phi_0 V_\e \ \textnormal{d}m_\e +O\left(\norm{V_\e}^2_{L^2(Y_\e,m_\e)}\right)\quad \textnormal{as}\ \e \to 0.
\end{align}
\end{remark}

Under some additional assumptions, we are able to identify more explicitly a sequence of eigenfunctions   associated to the simple eigenvalues $\la_\e$, that converges in suitable sense to $\phi_0$. More precisely let  $\phi_\e$ be the eigenfunction associated to   $\la_\e$  such that 
\begin{equation}\label{hp_phi_e_diri}
\int_{Y_\e}|\phi_\e|^2 \, \textnormal{d}m_\e  =1 \quad \text{ and  } \quad \int_{Y_\e}\phi_0 \phi_\e \, \textnormal{d}m_\e>0. 
\end{equation}
Such a  choice is possible, at least for small $\e$, since in view of \eqref{eq_eigenfunctions}, $\int_{X} \phi_0 \phi_\e  \, dm \neq 0$ for any 
 $\e$ close enough to $0$.

\begin{proposition}\label{prop_conv_eigenfunctions}
Suppose that 
\begin{equation}
\int_{Y_\e} |\phi_0|^2 \ \textnormal{d}m_\e=1+ O\left(||V_\e||_{L^2(Y_\e,m_\e)}\right)
\quad \textnormal{ as } \e \to 0. \label{hp_conv_Y_e_eginefunctions}
\end{equation}
Then 
\begin{align}
&\mc{E}^{(\e)}(\phi_0-\phi_\e)= O\left(\mc{E}^{(\e)}(V_\e)\right)\quad \text{ as }  \e \to 0^+. \label{eq_conv_eigenfunctions_precisise}\\
\end{align}
Furthermore, if 
\begin{equation}
 \norm{V_\e}^2_{L^2(Y_\e,m_\e)}= o\left(\mc{E}^{(\e)}(V_\e)\right)\label{hp_o_eginefunctions} 
\end{equation}
then 
\begin{align}
&\mc{E}^{(\e)}(\phi_0-\phi_\e)= \mc{E}^{(\e)}(V_\e)+o\left(\mc{E}^{(\e)}(V_\e)\right) 
\quad \text{ as } \e \to 0^+,\label{eq_conv_eigenfunctions_precisise_o}\\
&\norm{\phi_0-\phi_\e}^2_{L^2(Y_\e,m_\e)}=o\left(\mc{E}^{(\e)}(V_\e)\right) \quad \text{ as } \e \to 0^+. \label{eq_conv_eigenfunctions_L2}
\end{align}
\end{proposition}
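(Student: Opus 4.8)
The plan is to compare the normalized eigenfunction $\phi_\e$ with the projection $\Pi_\e\psi_\e$ already controlled in Theorem~\ref{theo_exp_1_order}, where $\psi_\e:=\phi_0-V_\e$. Set $c_\e:=\ps{L^2(Y_\e,m_\e)}{\psi_\e}{\phi_\e}$, so that $\Pi_\e\psi_\e=c_\e\phi_\e$ and $c_\e^2=\norm{\Pi_\e\psi_\e}^2_{L^2(Y_\e,m_\e)}$. (If $V_\e=0$ for $\e$ small, then $\phi_0$ is itself an eigenfunction of $\overline{H_\e}$ and all the claims are trivial, exactly as in the proof of Theorem~\ref{theo_exp_1_order}; so we may assume $V_\e\neq0$.) The first step is to show that $c_\e=1+O(\norm{V_\e}_{L^2(Y_\e,m_\e)})$ as $\e\to0$. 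On one hand, by \eqref{proof_heo_exp_1_order:10} we have $c_\e^2=\int_{Y_\e}|\phi_0|^2\,\textnormal{d}m_\e+O(\norm{V_\e}_{L^2(Y_\e,m_\e)})$, which together with the standing hypothesis \eqref{hp_conv_Y_e_eginefunctions} gives $c_\e^2=1+O(\norm{V_\e}_{L^2(Y_\e,m_\e)})$. On the other hand, $c_\e=\ps{L^2(Y_\e,m_\e)}{\phi_0}{\phi_\e}-\ps{L^2(Y_\e,m_\e)}{V_\e}{\phi_\e}$, where the first term is positive by \eqref{hp_phi_e_diri} and the second tends to $0$ by the Cauchy--Schwarz inequality and \eqref{hp_limit_E_e_Ve_o}; since $c_\e^2\to1$, this forces $c_\e\to1$, so $c_\e>0$ for $\e$ small and hence $c_\e=\sqrt{c_\e^2}=1+O(\norm{V_\e}_{L^2(Y_\e,m_\e)})$.

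With this, \eqref{eq_conv_eigenfunctions_precisise} follows from the decomposition $\phi_0-\phi_\e=(\phi_0-\Pi_\e\psi_\e)+(c_\e-1)\phi_\e$ and the triangle inequality for $(\mc{E}^{(\e)})^{1/2}$: the term $\mc{E}^{(\e)}(\phi_0-\Pi_\e\psi_\e)$ is $O(\mc{E}^{(\e)}(V_\e))$ by \eqref{eq_eigenfunctions_E}, while $\mc{E}^{(\e)}((c_\e-1)\phi_\e)=(c_\e-1)^2\la_\e=O(\norm{V_\e}_{L^2(Y_\e,m_\e)}^2)=O(\mc{E}^{(\e)}(V_\e))$, the last estimate being the Poincaré-type bound $\norm{V_\e}_{L^2(Y_\e,m_\e)}^2\le C\,\mc{E}^{(\e)}(V_\e)$ (Proposition~\ref{prop_poin}) that already underlies the final identity in \eqref{eq_eigenfunctions_E}; here we also used $\mc{E}^{(\e)}(\phi_\e)=\la_\e=O(1)$ by \eqref{eq_phie_eigen}. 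Squaring yields \eqref{eq_conv_eigenfunctions_precisise}.

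Under the additional assumption \eqref{hp_o_eginefunctions}, equivalently $\norm{V_\e}_{L^2(Y_\e,m_\e)}=o\big((\mc{E}^{(\e)}(V_\e))^{1/2}\big)$, I would prove \eqref{eq_conv_eigenfunctions_precisise_o} and \eqref{eq_conv_eigenfunctions_L2} starting from the exact identity $\phi_0-\phi_\e=V_\e+(\psi_\e-\Pi_\e\psi_\e)+(c_\e-1)\phi_\e$, which follows from $\phi_0=V_\e+\psi_\e$ and $\Pi_\e\psi_\e=c_\e\phi_\e$. Expanding $\mc{E}^{(\e)}(\phi_0-\phi_\e)$ into its six square and cross terms, the leading one is $\mc{E}^{(\e)}(V_\e)$; the cross term $\mc{E}^{(\e)}(\psi_\e-\Pi_\e\psi_\e,\phi_\e)$ vanishes identically, since $\psi_\e-\Pi_\e\psi_\e\in Z_\e$ is $L^2(Y_\e,m_\e)$-orthogonal to $\phi_\e$ and $\phi_\e$ is an eigenfunction, so by \eqref{eq_phie_eigen} it equals $\la_\e\ps{L^2(Y_\e,m_\e)}{\psi_\e-\Pi_\e\psi_\e}{\phi_\e}=0$; the remaining four terms are $o(\mc{E}^{(\e)}(V_\e))$ by \eqref{eq_eigenfunctions_Ve} (which gives $\mc{E}^{(\e)}(\psi_\e-\Pi_\e\psi_\e)=O(\norm{V_\e}_{L^2(Y_\e,m_\e)}^2)$), by $|c_\e-1|=O(\norm{V_\e}_{L^2(Y_\e,m_\e)})$, by the Cauchy--Schwarz inequality for $\mc{E}^{(\e)}$, by $\mc{E}^{(\e)}(\phi_\e)=O(1)$, and by \eqref{hp_o_eginefunctions}. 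This gives \eqref{eq_conv_eigenfunctions_precisise_o}. Finally, taking $L^2(Y_\e,m_\e)$-norms in the same identity and using $\norm{V_\e}_{L^2(Y_\e,m_\e)}=o\big((\mc{E}^{(\e)}(V_\e))^{1/2}\big)$, $\norm{\psi_\e-\Pi_\e\psi_\e}_{L^2(Y_\e,m_\e)}=O(\norm{V_\e}_{L^2(Y_\e,m_\e)})$ (Proposition~\ref{prop_poin} applied to $\psi_\e-\Pi_\e\psi_\e\in Z_\e$, together with \eqref{eq_eigenfunctions_Ve}), and $|c_\e-1|=O(\norm{V_\e}_{L^2(Y_\e,m_\e)})$, one obtains $\norm{\phi_0-\phi_\e}_{L^2(Y_\e,m_\e)}=o\big((\mc{E}^{(\e)}(V_\e))^{1/2}\big)$, which is \eqref{eq_conv_eigenfunctions_L2}.

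The only genuinely delicate point is the first step, namely pinning down the size and the sign of $c_\e$; the rest is routine bookkeeping with the Cauchy--Schwarz inequality and the Poincaré inequality of Proposition~\ref{prop_poin}, along the lines of the proof of Theorem~\ref{theo_exp_1_order}. It is also worth noting that \eqref{eq_conv_eigenfunctions_precisise_o} and \eqref{eq_conv_eigenfunctions_L2} are informative precisely when $\mc{E}^{(\e)}(V_\e)$ does not decay faster than $\norm{V_\e}_{L^2(Y_\e,m_\e)}^2$, which is exactly what hypothesis \eqref{hp_o_eginefunctions} excludes.
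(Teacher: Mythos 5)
Your proof is correct and takes essentially the same approach as the paper: both reduce to the identity $\phi_\e=\Pi_\e\psi_\e/\norm{\Pi_\e\psi_\e}_{L^2(Y_\e,m_\e)}$ (your $c_\e$ bookkeeping is an equivalent packaging, once the sign is pinned down via \eqref{hp_phi_e_diri}), compute $\norm{\Pi_\e\psi_\e}_{L^2(Y_\e,m_\e)}^2=1+O(\norm{V_\e}_{L^2(Y_\e,m_\e)})$ from \eqref{eq_eigenfunctions} and \eqref{hp_conv_Y_e_eginefunctions}, and then feed this into the estimates \eqref{eq_eigenfunctions_Ve} and \eqref{eq_eigenfunctions_E} of Theorem~\ref{theo_exp_1_order} via Cauchy--Schwarz and Proposition~\ref{prop_poin}. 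The only cosmetic difference is that your three-term decomposition $\phi_0-\phi_\e=V_\e+(\psi_\e-\Pi_\e\psi_\e)+(c_\e-1)\phi_\e$ exposes the exact vanishing of one cross term ($\mc{E}^{(\e)}(\psi_\e-\Pi_\e\psi_\e,\phi_\e)=0$ by orthogonality and \eqref{eq_phie_eigen}), whereas the paper uses the two-term split $\phi_0-\phi_\e=(\phi_0-\Pi_\e\psi_\e)+(\Pi_\e\psi_\e-\phi_\e)$ and bounds that cross term by Cauchy--Schwarz instead — both suffice for the $o(\mc{E}^{(\e)}(V_\e))$ conclusion.
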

\begin{proof}
Since $\la_\e$ is simple, thanks to   \eqref{eq_eigenfunctions} and \eqref{hp_phi_e_diri}
\begin{equation}
\phi_\e=\frac{\Pi_\e(\phi_0-V_\e)}{\norm{\Pi_\e(\phi_0-V_\e)}_{L^2(Y_\e,m_\e)}}.
\end{equation}
Hence 
\begin{equation}
\mc{E}^{(\e)}(\phi_\e - \Pi_\e(\phi_0-V_\e))
=\frac{|1-\norm{\Pi_\e(\phi_0-V_\e)}_{L^2(Y_\e,m_\e)}|^2}{\norm{\Pi_\e(\phi_0-V_\e)}^2_{L^2(Y_\e,m_\e)}} 
\mc{E}^{(\e)}(\Pi_\e(\phi_0-V_\e)) 
\end{equation}
and, by \eqref{eq_eigenfunctions} and  \eqref{hp_conv_Y_e_eginefunctions},
\begin{align}
&\norm{\Pi_\e(\phi_0-V_\e)}_{L^2(Y_\e,m_\e)}^2\\
&= \norm{\phi_0-\Pi_\e(\phi_0-V_\e)}^2_{L^2(Y_\e,m_\e)}+\norm{\phi_0}_{L^2(Y_\e,m_\e)}^2
-2 \ps{L^2(Y_\e,m_\e)}{\phi_0-\Pi_\e(\phi_0-V_\e)}{\phi_0}  \\
&=1+O(\norm{V_\e}_{L^2(Y_\e,m_\e)}).
\end{align}
It follows that 
\begin{equation}
\mc{E}^{(\e)}(\phi_\e - \Pi_\e(\phi_0-V_\e))=O(\norm{V_\e}^2_{L^2(Y_\e,m_\e)}) \quad \text{ as } \e \to 0^+.
\end{equation}
Hence by  \eqref{eq_eigenfunctions_E} we have proved \eqref{eq_conv_eigenfunctions_precisise}.
Furthermore, thanks to \eqref{hp_o_eginefunctions} and \eqref{eq_eigenfunctions_E},
\begin{align}
&\mc{E}^{(\e)}(\phi_0-\phi_\e) \\
&= \mc{E}^{(\e)}(V_\e) + \mc{E}^{(\e)}(\phi_\e-\Pi_\e(\phi_0-V_\e)) 
+2\mc{E}^{(\e)}(\phi_\e-\Pi_\e(\phi_0-V_\e),\Pi_\e(\phi_0-V_\e)-\phi_0) +o\left(\mc{E}^{(\e)}(V_\e)\right)\\
&=\mc{E}^{(\e)}(V_\e) +o\left(\mc{E}^{(\e)}(V_\e)\right) \quad \text{ as } \e \to 0^+,
\end{align}
which proves \eqref{eq_conv_eigenfunctions_precisise_o}. Finally, thanks to \eqref{eq_eigenfunctions} and \eqref{hp_o_eginefunctions}, we can prove \eqref{eq_conv_eigenfunctions_L2} in a similar manner. 
\end{proof}

\subsection{Some additional results to quantify $\la_\e - \la_0$.} \label{subsection_add_results} To quantify more explicitly the rate of convergences of simple eigenvalues provided by  Theorem \ref{theo_exp_1_order}, in many situations results like 
\cite[Proposition 4.5, Proposition 6.5]{FNOS_AB_12} or \cite[Lemma 3.1, Lemma 3.5]{FLO_holes_Neumann} may be 
useful. Indeed they can be used to compute the order of infinitesimal of  $\norm{V_\e}_{L^2(Y_\e,m_\e)}$ or
$\mc{E}^{(\e)}(V_\e)$ as $\e \to 0^+$.
In this subsection we  generalize to our abstract setting the afford mentioned results.
The first one is a characterization of the minimal value of the functional $J_\e$. 
Its interest lays in the fact that if $\norm{L_\e}_{(\mc{F}_\e)^*} \to 0^+$ as $\e \to 0^+$ in a controlled way then the study of the  asymptotic behavior as  of $\mc{E}^{(\e)}(V_\e)$ (thus also of $\norm{V_\e}_{L^2(Y_\e,m_\e)}$) can be  reduced to the study of  asymptotic behavior of $J_\e(V_\e)$. 
Indeed, testing \ref{eq_Ve} with $V_\e$, by Proposition \ref{prop_poin},
\begin{equation}
\la_{1,\e} \norm{V_\e}^2_{L^2(Y_\e,m_\e)} \le \mc{E}^{(\e)}(V_\e) = 2(J_\e(V_\e)+L_\e(V_\e)) 
 \le 2(|J_\e(V_\e)|+\norm{L_\e}_{(\mc{F}_\e)^*}(\mc{E}^{(\e)}(V_\e))^\frac{1}{2}).  
\end{equation}
The advantage of this approach is that competitors may be used to obtain information about the infinitesimal order of $J_\e(V_\e)$, see for example \cite[Proposition 4.3]{FNOS_AB_12}.

\begin{proposition}\label{prop_Ee_equiv}
For any $\e \in [0,1]$
\begin{align}\label{eq_Ee_equiv}
J_\e(V_\e)=-\frac{1}{2} \sup_{w \in Z_\e\setminus \{0\}}
\frac{\left(\mc{E}^{(\e)}(\phi_0,w) - L_\e(w)\right)^2}
{\mc{E}^{(\e)}(w)}+\frac{1}{2}\mc{E}^{(\e)}(\phi_0) -L_\e(\phi_0).
\end{align}
Moreover, if $\phi_0\in Z_\e$, then
\begin{align}\label{eq_Ee_equiv2}
J_\e(V_\e)=-\frac{1}{2} \sup_{w\in Z_\e \setminus\{0\}} \frac{L_\e(w)^2}{\mc{E}^{(\e)}(w)}.
\end{align}
\end{proposition}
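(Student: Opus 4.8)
The plan is to recognize $J_\e$ as a strictly convex quadratic functional on the affine space $Z_\e + {\phi_0}_{|Y_\e}$ and to compute its minimum by a direct substitution. Writing any competitor as $u = {\phi_0}_{|Y_\e} - w$ with $w \in Z_\e$ (the sign is chosen to match $\psi_\e = \phi_0 - V_\e$), we expand
\[
J_\e({\phi_0}_{|Y_\e} - w) = \tfrac{1}{2}\mc{E}^{(\e)}(\phi_0) - \mc{E}^{(\e)}(\phi_0, w) + \tfrac{1}{2}\mc{E}^{(\e)}(w) - L_\e(\phi_0) + L_\e(w),
\]
using bilinearity and symmetry of $\mc{E}^{(\e)}$ and linearity of $L_\e$. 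Collecting the $w$-dependent terms, we must minimize over $w \in Z_\e$ the quantity $\tfrac12\mc{E}^{(\e)}(w) - \big(\mc{E}^{(\e)}(\phi_0,w) - L_\e(w)\big)$, while $\tfrac12\mc{E}^{(\e)}(\phi_0) - L_\e(\phi_0)$ is a constant. So the first task is an abstract minimization lemma: for a Hilbert space $(Z_\e, \mc{E}^{(\e)})$ — which it is, by Proposition \ref{prop_poin}, since $\overline{\mc{E}^{(\e)}}$ is a scalar product on $Z_\e$ — and a bounded linear functional $\ell_\e(w) := \mc{E}^{(\e)}(\phi_0,w) - L_\e(w)$, one has $\inf_{w \in Z_\e}\big(\tfrac12\mc{E}^{(\e)}(w) - \ell_\e(w)\big) = -\tfrac12\|\ell_\e\|^2$, where the norm is the dual norm with respect to $\mc{E}^{(\e)}$; and the standard identity $\|\ell_\e\|^2 = \sup_{w \neq 0} \ell_\e(w)^2 / \mc{E}^{(\e)}(w)$ converts this to the form stated in \eqref{eq_Ee_equiv}.

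To verify the abstract lemma I would note that $\ell_\e$ is bounded on $Z_\e$: the term $L_\e(w)$ is controlled by $\norm{L_\e}_{(\mc{F}_\e)^*}(\mc{E}_1^{(\e)}(w))^{1/2}$ and then by Proposition \ref{prop_poin}, while $\mc{E}^{(\e)}(\phi_0,w) = \overline{\mc{E}^{(\e)}}(\phi_0, w)$ whenever ${\phi_0}_{|Y_\e}\in Z_\e$, and in general it equals $\la_0(\phi_0,w)_{L^2(Y_\e,m_\e)} + L_\e(w)$ by \eqref{eq_Vehi0_in_Ze}, which is manifestly bounded on $Z_\e$. Hence by Riesz representation there is a unique $\widetilde w_\e \in Z_\e$ with $\mc{E}^{(\e)}(\widetilde w_\e, w) = \ell_\e(w)$ for all $w \in Z_\e$; but comparing with \eqref{eq_Ve} and \eqref{eq_Vehi0_in_Ze} shows precisely that ${\phi_0}_{|Y_\e} - \widetilde w_\e = V_\e$, so the minimizer $\widetilde w_\e$ realizing the infimum coincides with $\phi_0 - V_\e = \psi_\e$, and $J_\e(V_\e)$ is indeed the minimum value. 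Plugging $w = \widetilde w_\e$ into $\tfrac12\mc{E}^{(\e)}(w) - \ell_\e(w) = -\tfrac12\mc{E}^{(\e)}(\widetilde w_\e) = -\tfrac12\|\ell_\e\|^2$ gives \eqref{eq_Ee_equiv}. For the second statement, when ${\phi_0}_{|Y_\e}\in Z_\e$ we may take $w$ itself to range so that $\mc{E}^{(\e)}(\phi_0, w)$ appears; but more directly, since then $Z_\e + {\phi_0}_{|Y_\e} = Z_\e$, the minimization is over the linear space $Z_\e$ and the linear-in-$u$ part is $-L_\e(u)$ alone while the quadratic part is $\tfrac12\mc{E}^{(\e)}(u)$, so the same lemma with $\ell_\e = L_\e$ and no constant term yields \eqref{eq_Ee_equiv2}.

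I do not expect a serious obstacle here; the only point requiring a little care is bookkeeping the sign convention and confirming that the Riesz minimizer of the reduced functional is genuinely $\psi_\e$ (equivalently that $u \mapsto {\phi_0}_{|Y_\e} - w$ is the right parametrization of the affine constraint set), which is settled by matching the Euler–Lagrange equation against \eqref{eq_Ve}. A minor subtlety in \eqref{eq_Ee_equiv2} is that the displayed supremum should be interpreted as $0$ when $L_\e \equiv 0$ on $Z_\e$, consistent with $V_\e = 0$ and $J_\e(V_\e) = 0$ in that degenerate case; I would remark on this rather than treat it as a separate case.
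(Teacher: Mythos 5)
Your proposal is correct and reaches the identity by essentially the same route as the paper: both reduce the minimization over the affine set $Z_\e+{\phi_0}_{|Y_\e}$ to a one-parameter quadratic problem and read off the supremum. The only difference is bookkeeping: you package the reduction as the standard Hilbert-space fact $\inf_{w\in Z_\e}\bigl(\tfrac12\mc{E}^{(\e)}(w)-\ell_\e(w)\bigr)=-\tfrac12\sup_{w\neq 0}\ell_\e(w)^2/\mc{E}^{(\e)}(w)$ via Riesz representation and the dual norm, and along the way identify $\widetilde w_\e=\psi_\e$; the paper obtains the same formula by writing $\inf_{w\in Z_\e}\inf_{t\in\R}J_\e(tw+\phi_0)$ and minimizing the resulting quadratic in $t$ by hand. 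Your observation that $\ell_\e(w)=\la_0(\phi_0,w)_{L^2(Y_\e,m_\e)}$ by \eqref{eq_Vehi0_in_Ze} is a clean way to see boundedness, and your remark about interpreting the supremum as $0$ when $L_\e\equiv 0$ and $\phi_0\in Z_\e$ is a sensible but unnecessary precaution since in that case $\phi_0$ itself is a competitor and the nested-infimum formula already returns $J_\e(\phi_0)$.
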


\begin{proof}
In view of \eqref{prob_J_min} it follows
\begin{align}\label{proof:prop_Ee_equiv_1}
J_\e (V_\e)&=\inf_{w \in Z_\e} J_\e(w+\phi_0) =\inf_{w \in Z_\e}\left(\inf_{t \in \R}J_\e(tw+\phi_0)\right).
\end{align}
By the definition of $J_\e$, we get
\begin{align}
J_\e(tw+\phi_0)&=\frac{1}{2}\mc{E}^{(\e)}(tw+\phi_0)-L_\e(tw+\phi_0)\\
&= \frac{t^2}{2} \mc{E}^{(\e)}(w) + \frac{1}{2} \mc{E}^{(\e)}(\phi_0) + t \mc{E}^{(\e)}(w,\phi_0) - t L_\e(w) - L_\e (\phi_0)
\end{align}
implying that for any $w \in Z_\e\setminus \{0\}$
\begin{align}
\inf_{t \in \R}J_\e(tw+\phi_0) = &\frac{1}{2}\left(\frac{\mc{E}^{(\e)}(w,\phi_0)-L_\e(w)}{\mc{E}^{(\e)}(w)}\right)^2\mc{E}^{(\e)}(w)+ \frac{1}{2}\mc{E}^{(\e)}(\phi_0)\\
&-\frac{\mc{E}^{(\e)}(w,\phi_0)-L_\e(w)}{\mc{E}^{(\e)}(w)}  \mc{E}^{(\e)}(w,\phi_0) + \frac{\mc{E}^{(\e)}(w,\phi_0)-L_\e(w)}{\mc{E}^{(\e)}(w)} L_\e(w) - L_\e (\phi_0).
\end{align}
Hence, \eqref{eq_Ee_equiv} follows from \eqref{proof:prop_Ee_equiv_1}.

In the particular case $\phi_0 \in Z_\e$ we have that 
\begin{align*}
 J_\e (V_\e)=\inf_{w \in Z_\e} J_\e(w)& =\inf_{w \in Z_\e}\inf_{t \in \R}J_\e(tw)\\
 &=\inf_{w \in Z_\e}\inf_{t \in \R} \left(\frac{t^2}{2} \mc{E}^{(\e)}(w) -t L_\e(w)\right)
\end{align*}
 and so 
\begin{align}
 J_\e(V_\e)=-\frac{1}{2} \sup_{w\in Z_\e \setminus\{0\}} \frac{L_\e(w)^2}{\mc{E}^{(\e)}(w)}.
\end{align}
\end{proof}

The next proposition provides a sufficient condition  to sharpen the asymptotic expansion in \eqref{eq_asymptotic_eigenvlaues_1_order}.
Furthermore in many  applications is  easier to  compute the infinitesimal order of  $\mc{E}(V_\e)$ instead of $\norm{V_\e}_{L^2(Y_\e,m_\e)}$, especially when blow-up arguments are involved,  see for example \cite{FLO_holes_Neumann,FNOS_AB_12}. 
\begin{proposition}\label{prop_norm_L2_o}
Assume  that  the following holds.
\begin{itemize}
\item[(i)] Let $\e_k \to 0^+$ and let  $\{u_k\}_{k\in \mb{N}}$ with 
$u_k \in \mc{F}_{\e_k}$,  $\norm{u_{k_n}}_{L^2(Y_{\e_k},m_{\e_{k_n}})}=1$  and  $\mc{E}^{(\e_k)}(u_k) \le C$,
for some constant $C>0$ that does not depends on $k$. Then  there exists $u \in \mc{F}_0$ and a subsequence $\{u_{k_n}\}_{n \in \mathbb{N}}$  such that:
\begin{itemize}
\item[(i.1)] $\mc{E}^{(\e_{k_n})}(u_{k_n},v) \to \mc{E}^{(0)}(u,v)$ for any  $v \in \mc{F}_0$,
\item[(i.2)] $\norm{u}_{L^2(X,m)}=1$.
\end{itemize}
\item [(ii)] For any $u \in \mc{F}_0$ there exists $\{u_n\}_{n\in \mathbb{N}} \subset \mc{F}_0$ such that:  
\begin{itemize}
\item[(ii.1)]     $u_n\rightharpoonup u$ weakly in $\mc{F}_0$ as $n \to \infty$,
 \item[(ii.2)]     for any $\e_0\in (0,1]$ there exists  $n_0 \in \mb{N}$  such that   $u_n \in Z_\e$  for any $n\ge n_0$ and any $\e \in (0,\e_0]$,
 \item[(ii.3)]     $L_\e(u_n)=0$ for any  $n \ge n_0 $ and any  $\e \in (0,\e_0]$.
\end{itemize}
\end{itemize}
 Then
\begin{align}
    \norm{V_\e}^2_{L^2(Y_\e,m_\e)}=o(\mc{E}^{(\e)}(V_\e)) \quad as\ \e \to 0.
\end{align}
\end{proposition}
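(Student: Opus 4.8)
The plan is to argue by contradiction. Suppose the conclusion fails; then there exists a sequence $\e_k \to 0^+$ and a constant $c>0$ such that
\begin{equation}\label{plan:contra}
\norm{V_{\e_k}}^2_{L^2(Y_{\e_k},m_{\e_k})} \ge c\, \mc{E}^{(\e_k)}(V_{\e_k}) \quad \text{ for all } k.
\end{equation}
By Proposition \ref{prop_J_min} together with assumption \eqref{hp_limit_E_e_Ve_o} and \eqref{eq_Ve} tested with $V_{\e_k}$, we know $\mc{E}^{(\e_k)}(V_{\e_k}) \to 0$; in particular also $\norm{V_{\e_k}}_{L^2}\to 0$. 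The natural normalization is to set $u_k := V_{\e_k}/\norm{V_{\e_k}}_{L^2(Y_{\e_k},m_{\e_k})}$, so that $\norm{u_k}_{L^2}=1$ and, by \eqref{plan:contra}, $\mc{E}^{(\e_k)}(u_k) = \mc{E}^{(\e_k)}(V_{\e_k})/\norm{V_{\e_k}}^2_{L^2} \le 1/c =: C$. This is exactly the boundedness hypothesis in (i), so along a subsequence (not relabeled) there exists $u \in \mc{F}_0$ with $\mc{E}^{(\e_k)}(u_k,v) \to \mc{E}^{(0)}(u,v)$ for every $v \in \mc{F}_0$ and $\norm{u}_{L^2(X,m)}=1$.

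Next I would identify the limit $u$ as an eigenfunction of $H_0$ associated to $\la_0$, which will be the contradiction since $\norm{u}_{L^2}=1$ but $u$ should turn out to be forced to be $0$ (or, more precisely, we derive $\mc{E}^{(0)}(u,v)=0$ for all $v$ in a dense set, hence $u=0$ by Poincaré, Proposition \ref{prop_poin}, contradicting $\norm{u}_{L^2}=1$). To see this, fix $v\in\mc{F}_0$ and take the approximating sequence $\{v_n\}\subset\mc{F}_0$ from (ii): $v_n \rightharpoonup v$ weakly in $\mc{F}_0$, and for $n$ large $v_n \in Z_\e$ and $L_\e(v_n)=0$ for all small $\e$. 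Testing the defining equation \eqref{eq_Ve} for $V_{\e_k}$ with $v_n$ gives $\mc{E}^{(\e_k)}(V_{\e_k},v_n)=L_{\e_k}(v_n)=0$, hence $\mc{E}^{(\e_k)}(u_k,v_n)=0$ for all $k$ large. Letting $k\to\infty$ using (i.1) yields $\mc{E}^{(0)}(u,v_n)=0$ for all $n$ large. Then letting $n\to\infty$ and using that $v_n\rightharpoonup v$ weakly in $\mc{F}_0$ (so $\mc{E}^{(0)}(u,v_n)\to\mc{E}^{(0)}(u,v)$, since $\mc{E}^{(0)}(u,\cdot)$ is a bounded linear functional on $\mc{F}_0$) gives $\mc{E}^{(0)}(u,v)=0$. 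As $v\in\mc{F}_0$ was arbitrary, $\mc{E}^{(0)}(u,u)=0$, so by Proposition \ref{prop_poin} $\norm{u}^2_{L^2(X,m)}\le\la_{0,1}^{-1}\mc{E}^{(0)}(u,u)=0$, contradicting (i.2).

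The step I expect to be the main obstacle is the careful bookkeeping of the two nested limits (first $k\to\infty$, then $n\to\infty$) and making sure the "for $n$ large, for all small $\e$" quantifiers in (ii.2)–(ii.3) interact correctly with the sequence $\e_k\to 0$: concretely, for each fixed $n\ge n_0$ one needs $\e_k$ small enough (depending on $n$) that $v_n\in Z_{\e_k}$ and $L_{\e_k}(v_n)=0$, so that $\mc{E}^{(\e_k)}(u_k,v_n)=0$ holds for all sufficiently large $k$; only then may one pass to the limit in $k$. A minor subtlety is verifying that $\mc{E}^{(0)}(u,\cdot)$ is indeed weakly continuous on $\mc{F}_0$ — this follows because $v\mapsto\mc{E}^{(0)}(u,v)$ is bounded with respect to the $\mc{E}^{(0)}_1$-norm (Cauchy–Schwarz for the symmetric form $\mc{E}^{(0)}$) and $\mc{F}_0$ with the $\mc{E}^{(0)}_1$ inner product is Hilbert, so weak convergence in $\mc{F}_0$ suffices. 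Everything else is a routine combination of the contradiction normalization, the compactness input (i), and the abstract Poincaré inequality.
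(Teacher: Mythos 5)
Your proposal is correct and follows essentially the same route as the paper's proof: contradiction, normalization $W_k:=V_{\e_k}/\norm{V_{\e_k}}_{L^2}$, extraction via (i), approximation via (ii) with the two nested limits $k\to\infty$ then $n\to\infty$, and the Poincar\'e inequality (Proposition \ref{prop_poin}) to contradict $\norm{W}_{L^2}=1$. Your extra remarks on the quantifier bookkeeping in (ii.2)--(ii.3) and on the weak continuity of $\mc{E}^{(0)}(u,\cdot)$ are accurate clarifications of points the paper leaves implicit.
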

\begin{proof}
Suppose by contradiction that there exists a subsequence of $\{\e_k\}_{k \in \mathbb{N}}$ such that 
\begin{align}
\norm{V_{\e_k}}^2_{L^2(Y_{\e_k},m_{\e_k})}\geq C \mc{E}^{(\e_k)}(V_{\e_k})
\end{align}
where $C>0$ is a constant not depending on $k$. Define
\begin{align*}
W_k:= \frac{V_{\e_k}}{\norm{V_{\e_k}}_{L^2(Y_{\e_k},m_{\e_k})}}.
\end{align*}
By assumption, there exists $W\in \mc{F}_0$ with  $\norm{W}_{L^2(X,m)}=1$ and such that, up to a subsequence,
\begin{align}
\mc{E}^{(\e_k)}(W_k,v)\to \mc{E}^{(0)}(W,v) \quad \text{ for any } v \in \mc{F}_0.
\end{align}
Let  $v\in \mc{F}_0$ and let  $\{v_n\}\subset \mc{F}_0$ be as in $({\rm{ii}})$. Then for any $k_0 \in \mathbb{N}$ there exists a $n_0 \in \mathbb{N}$ such that 
\begin{align}
\mc{E}^{(\e_k)}(W_k,v_n)=L_\e(v_n)=0 \quad \text{ for any } n \ge n_0 \text{ and any } k \ge k_0.   
\end{align}
Hence, passing to the limit as $k \to \infty$, we obtain 
\begin{align}
\mc{E}^{(0)}(W,v_n)=0 \quad \text{ for any }   n \ge n_0.
\end{align}
Passing to the limit as $n\to \infty$, we conclude that 
\begin{align}
\mc{E}^{(0)}(W,v)=0 \quad \text{ for any } v \in \mc{F}_0.
\end{align}
Testing the above equation with  $W$, we get $\mc{E}^{(0)}(W)=0$, which is a contradiction in view of Proposition \ref{prop_poin}.
\end{proof}

\subsection{A sufficient condition for spectral stability}\label{subsec_spectral_stability}
In this subsection we provide a general criterion to prove that  assumption \eqref{hp_limit_simple} holds, that is, that there is  spectral stability. It is based on the following result, we refer to \cite[Corollaries XI 9.3, XI 9.4]{DSJ_book} for a proof.
\begin{theorem}\label{theo_eigen_estimate}
Let $R_1, R_2$ be linear bounded, positive, compact, self-adjoint operators on a Hilbert space $\mc{H}$. Let
$\{\mu_{n,i}\}_{n \in \mathbb{N}\setminus\{0\}}:=\sigma(R_i)\setminus\{0\}$ for $i=1,2$. Then 
\begin{equation}
|\mu_{n,1}-\mu_{n,2}| \le \norm{R_1-R_2}_{\mathcal{L}(H)}\quad  \text{ for any } n \in \mb{N}\setminus\{0\}.
\end{equation}
\end{theorem}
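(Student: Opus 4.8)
The plan is to derive the estimate from the Courant--Fischer max--min characterization of the eigenvalues of a compact, positive, self-adjoint operator: this reduces the comparison of eigenvalues to the (trivial) comparison of the two associated quadratic forms $u\mapsto (R_1u,u)_{\mc{H}}$ and $u\mapsto (R_2u,u)_{\mc{H}}$.

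First I would recall the spectral decomposition. Since each $R_i$ is compact, positive and self-adjoint, the spectral theorem yields an orthonormal system $\{e_{k,i}\}_k$ of eigenvectors spanning $\overline{\mathrm{Ran}\,R_i}$, with eigenvalues $\mu_{1,i}\ge\mu_{2,i}\ge\cdots>0$ arranged in nonincreasing order and repeated according to multiplicity, accumulating only at $0$, and $R_i=\sum_k \mu_{k,i}(\cdot,e_{k,i})_{\mc{H}}\,e_{k,i}$. When $R_i$ has finite rank $r_i$ I would set $\mu_{n,i}:=0$ for $n>r_i$, so that $\mu_{n,i}$ is defined for every $n\ge1$; this is the enumeration implicit in the statement. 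With this convention I would establish, for every $n\ge1$ and $i=1,2$, the max--min identity
\begin{equation}
\mu_{n,i}=\max_{\substack{W\subseteq\mc{H}\\ \dim W=n}}\ \min_{\substack{u\in W\\ \norm{u}=1}}\ (R_i u,u)_{\mc{H}}.
\end{equation}
The inequality ``$\ge$'' follows by testing with $W=\mathrm{span}\{e_{1,i},\dots,e_{n,i}\}$, on which $(R_iu,u)_{\mc{H}}=\sum_{k=1}^n\mu_{k,i}|(u,e_{k,i})_{\mc{H}}|^2\ge\mu_{n,i}$ for unit $u$; the inequality ``$\le$'' follows from a dimension count, since every $n$-dimensional $W$ contains a unit vector $u$ orthogonal to $e_{1,i},\dots,e_{n-1,i}$, for which $(R_iu,u)_{\mc{H}}=\sum_{k\ge n}\mu_{k,i}|(u,e_{k,i})_{\mc{H}}|^2\le\mu_{n,i}$.

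Next comes the only substantive estimate, which is elementary: for every $u\in\mc{H}$,
\begin{equation}
\big|(R_1u,u)_{\mc{H}}-(R_2u,u)_{\mc{H}}\big|=\big|((R_1-R_2)u,u)_{\mc{H}}\big|\le\norm{R_1-R_2}_{\mathcal{L}(\mc{H})}\norm{u}^2
\end{equation}
by Cauchy--Schwarz. Plugging this into the max--min formula: for a fixed $n$-dimensional $W$ and every unit $u\in W$ one has $(R_1u,u)_{\mc{H}}\le(R_2u,u)_{\mc{H}}+\norm{R_1-R_2}_{\mathcal{L}(\mc{H})}$; taking first the minimum over unit $u\in W$ and then the maximum over $W$ gives $\mu_{n,1}\le\mu_{n,2}+\norm{R_1-R_2}_{\mathcal{L}(\mc{H})}$. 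Exchanging the roles of $R_1$ and $R_2$ yields the reverse bound, and the two together give $|\mu_{n,1}-\mu_{n,2}|\le\norm{R_1-R_2}_{\mathcal{L}(\mc{H})}$.

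I do not anticipate a genuine obstacle: the statement is classical (indeed it is quoted from \cite[Corollaries XI 9.3, XI 9.4]{DSJ_book}). The only point requiring a little care is the bookkeeping of the enumeration, i.e.\ checking that the max--min identity persists under the convention $\mu_{n,i}=0$ for $n$ exceeding $\mathrm{rank}\,R_i$ --- which is immediate, since then every $n$-dimensional $W$ meets $\ker R_i$ and the inner minimum vanishes --- and noting that the same enumeration is used for both operators so that the indices match.
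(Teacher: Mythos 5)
The paper does not actually prove this theorem; it only cites Dunford--Schwartz \cite[Corollaries XI 9.3, XI 9.4]{DSJ_book}, and your Courant--Fischer max--min argument is precisely the classical route behind that reference, so this is essentially the same approach. Your proof is correct, including the bookkeeping of the enumeration in the finite-rank case by padding with zeros; note that in the paper's own application (Theorem \ref{theo_spectral_stability}) the operators $R_\e$ and $\widetilde R_\e$ are inverses of operators with unbounded discrete spectrum and hence have infinitely many nonzero eigenvalues, so that edge case does not arise there.
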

The proof of the following theorem is inspired by \cite[Proposition 6.1.7]{BB_book} and this approach to spectral stability is inspired by \cite[Lemma 1.1]{RT_qualitative_stability}.
\begin{theorem}\label{theo_spectral_stability}
Suppose that 
\begin{equation}\label{hp_measure_to_0}
{m_\e=m \text{ for any } \e \in (0,1]} \text{ and  } m(X\setminus Y_\e) \to 0^+ \text{ as } \e\to 0^+.
\end{equation}
Let $R_\e$ be as in \eqref{def_Re}. Assume that for any $\e \in [0,1]$ there exists a bounded linear operator 
\begin{equation}
P_\e:\mc{Z}_0 \to \mc{Z}_\e
\end{equation}
and a bounded  linear operator
\begin{equation}
E_\e:Z_\e \to \mc{Z}_0
\end{equation}
such that $(E_\e u)_{|_{Y_\e}}=u$, $P_\e E_\e u=u$ for any $u \in Z_\e$ and 
\begin{align}
&\lim_{\e  \to 0^+}(P_\e w,v)_{L^2(Y_\e,m)}=(w,v)_{L^2(X,m)} \quad \text{ for any }v,w \in \mc{Z}_0,\label{hp_limit_P}\\
&\ps{L^2(Y_\e,m)}{R_\e P_\e v}{w_{|_{Y_\e}}}=\ps{L^2(Y_\e,m)}{v_{|_{Y_\e}}}{R_\e P_\e w} \quad \text{ for any }v,w \in \mc{Z}_0.\label{hp_RP_auto}
\end{align}
Furthermore, let $(\mc{E}, \mc{F})$ be a positive, densely defined, closed bilinear form and $H$ the correspondent densely defined, positive, self-adjoint operator. Assume that $H^{-1}$ is a well-defined, bounded and compact operator and that $\mc{F} \subset \mc{Z}_0$. 
Finally let
\begin{equation}
\widetilde{R}_\e: \mc{Z}_0 \to  \mc{Z}_0, \quad \widetilde{R}_\e:= E_\e R_\e P_\e
\end{equation}
and suppose   $\widetilde{R}_\e:\mc{Z}_0 \to \mc{F}$ are well-defined and  equibounded, that is there exists a constant $C$, that does not depend on $\e$, such that
\begin{equation}\label{ineq_equi_bounded}
\mc{E}(\widetilde{R}_{\e}u) \le C \norm{u}^2_{L^2(X,m)} \quad \text{ for any } u \in Z_0 \text{ and } \e \in [0,1].
\end{equation}
We also assume that  if $\widetilde{R}_\e u \rightharpoonup w$ weakly in $\mc{F}$ then $w \in Z_0$  and 
\begin{equation}\label{hp_convergence_spectral_stab_bilinear}
\lim_{\e \to 0^+}\mc{E}^{(\e)}(R_\e P_\e u, v)= \mc{E}^{(0)}(w, v), \quad \text{ for any }  v \in Z_0.
\end{equation}
Then for any $n \in \mathbb{N}$
\begin{equation}\label{eq_limit_eigen_stability}
\lim_{\e \to 0^+} \la_{\e,n}=\la_{0,n}.
\end{equation}
\end{theorem}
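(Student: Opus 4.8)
The plan is to reduce the convergence of eigenvalues $\la_{\e,n}\to\la_{0,n}$ to the norm convergence of the resolvent operators, and then apply Theorem \ref{theo_eigen_estimate}. Since $\la_{\e,n}$ are the eigenvalues of $\overline{H_\e}$, their reciprocals $\mu_{\e,n}:=\la_{\e,n}^{-1}$ are the nonzero eigenvalues of $R_\e=(\overline{H_\e})^{-1}$, so it would suffice to show $\|\widetilde R_\e-\widetilde R_0\|_{\mc L(\mc Z_0)}\to 0$, where $\widetilde R_0$ is the ``limit'' operator, together with the fact that $\widetilde R_\e$ and $R_\e$ have the same nonzero spectrum (this uses $(E_\e u)_{|Y_\e}=u$ and $P_\e E_\e u=u$, so $E_\e,P_\e$ intertwine $R_\e$ and $\widetilde R_\e$ on the relevant subspaces, a short linear-algebra argument). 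First I would identify the candidate limit operator: using the equiboundedness \eqref{ineq_equi_bounded} and the compact embedding $\mc F\hookrightarrow L^2(X,m)$ from Proposition \ref{prop_compact_embedding}, a diagonal/subsequence argument produces, for each $u$, a weak limit $w$ in $\mc F$ of $\widetilde R_\e u$; assumption \eqref{hp_convergence_spectral_stab_bilinear} identifies $w$ as a solution of $\mc E^{(0)}(w,v)=(u,v)_{L^2}$ for all $v\in Z_0$, i.e.\ $w=\widetilde R_0 u:=R_0 u$ by Proposition \ref{prop_regularity} (or directly by definition of $R_0$). One checks $\widetilde R_0$ is bounded, self-adjoint (here \eqref{hp_RP_auto} and \eqref{hp_limit_P} are used to pass the symmetry relation to the limit) and compact.

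The second and main step is upgrading weak convergence $\widetilde R_\e u\rightharpoonup \widetilde R_0 u$ in $\mc F$ to strong convergence in $L^2(X,m)$, and then to \emph{operator-norm} convergence. For the strong $L^2$ convergence of a single orbit, I would use the compact embedding: given $u_\e\rightharpoonup u$ in $L^2$ with $\mc E(\widetilde R_\e u_\e)\le C$, the sequence $\widetilde R_\e u_\e$ is bounded in $\mc F$, hence relatively compact in $L^2(X,m)$; any $L^2$-limit is identified via \eqref{hp_convergence_spectral_stab_bilinear} and \eqref{hp_limit_P}, so the whole sequence converges. The hypothesis \eqref{hp_measure_to_0} (that $m_\e=m$ and $m(X\setminus Y_\e)\to 0$) is what lets me compare the $L^2(Y_\e,m)$ and $L^2(X,m)$ norms and extend functions by zero without losing control. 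For the operator-norm statement, I would argue by contradiction following \cite[Proposition 6.1.7]{BB_book}: if $\|\widetilde R_\e-\widetilde R_0\|_{\mc L}\not\to 0$, pick $\e_k\to 0$ and unit vectors $f_k$ with $\|(\widetilde R_{\e_k}-\widetilde R_0)f_k\|_{L^2}\ge\delta>0$; extract $f_k\rightharpoonup f$; then $\widetilde R_0 f_k\to\widetilde R_0 f$ strongly by compactness of $\widetilde R_0$, and $\widetilde R_{\e_k}f_k\to\widetilde R_0 f$ strongly by the orbit argument above, giving a contradiction. Finally, \eqref{eq_limit_eigen_stability} follows from $|\mu_{\e,n}-\mu_{0,n}|\le\|\widetilde R_\e-\widetilde R_0\|_{\mc L(\mc Z_0)}\to 0$ via Theorem \ref{theo_eigen_estimate}, noting $\mu_{0,n}>0$ by \eqref{hp_la_1>0}, so reciprocals are continuous at $\mu_{0,n}$.

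The step I expect to be the main obstacle is the passage from weak convergence of the individual orbits to operator-norm convergence in a setting where the operators act between different spaces $\mc Z_\e$ and the underlying domains $Y_\e$ vary with $\e$. The bookkeeping with the extension operators $E_\e$, the ``transport'' operators $P_\e$, and the restriction maps $u\mapsto u_{|Y_\e}$ must be done carefully so that all the inner products and bilinear forms are paired on the correct space; in particular verifying that $R_\e$ and $\widetilde R_\e=E_\e R_\e P_\e$ genuinely share the same nonzero eigenvalues (with multiplicities), and that the self-adjointness of $\widetilde R_\e$ survives in the limit, relies precisely on the compatibility relations $(E_\e u)_{|Y_\e}=u$, $P_\e E_\e=\mathrm{Id}$ on $Z_\e$, \eqref{hp_limit_P} and \eqref{hp_RP_auto}. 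Once the functional-analytic framework is set up correctly, each individual convergence is a routine compactness-plus-identification argument; the delicate point is organizing it so that the diagonal extraction in the contradiction argument is legitimate and the limit object does not depend on the chosen subsequence.
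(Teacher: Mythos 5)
Your overall strategy matches the paper's exactly: show $R_\e$ and $\widetilde R_\e=E_\e R_\e P_\e$ share the same nonzero spectrum (via the intertwining $P_\e E_\e=\mathrm{Id}$, $E_\e$ injective), establish strong $L^2$-convergence of $\widetilde R_\e u\to R_0 u$ for fixed $u$, upgrade this to $\|\widetilde R_\e-R_0\|_{\mc L(L^2(X,m))}\to 0$ by a contradiction/compactness argument, and conclude by Theorem~\ref{theo_eigen_estimate}. So the architecture of the proof is correct.

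However, there is a genuine gap exactly at the step you flag as the main difficulty. In your operator-norm contradiction argument you need to identify the strong $L^2$-limit of $\widetilde R_{\e_k}f_k$ for a \emph{varying} sequence $f_k\rightharpoonup f$, and you claim this follows "by the orbit argument above," which you ground in hypothesis~\eqref{hp_convergence_spectral_stab_bilinear}. But \eqref{hp_convergence_spectral_stab_bilinear} is stated only for a \emph{fixed} $u$: it says that if $\widetilde R_\e u\rightharpoonup w$ (with $u$ not depending on $\e$) then $\mc E^{(\e)}(R_\e P_\e u,v)\to\mc E^{(0)}(w,v)$. There is nothing in the hypotheses that lets you replace $u$ by a weakly convergent family $u_\e$ and still pass to the limit in the bilinear form, so the identification step for $\widetilde R_{\e_k}f_k$ is not justified as written. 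The paper's proof circumvents this precisely by the adjoint trick: it uses \eqref{hp_RP_auto} to move the operator off the varying argument onto a fixed test function $h$, writing
\begin{equation}
\ps{L^2(X,m)}{\widetilde R_{\e_n}f_{\e_n}}{h}
=\ps{L^2(X,m)}{f_{\e_n}}{\widetilde R_{\e_n}h}
-\ps{L^2(X\setminus Y_{\e_n},m)}{f_{\e_n}}{\widetilde R_{\e_n}h}
+\ps{L^2(X\setminus Y_{\e_n},m)}{\widetilde R_{\e_n}f_{\e_n}}{h},
\end{equation}
where the last two terms vanish because $m(X\setminus Y_\e)\to 0$ (hypothesis~\eqref{hp_measure_to_0}) combined with absolute continuity of the integral, and the first term converges to $\ps{L^2(X,m)}{f}{R_0 h}=\ps{L^2(X,m)}{R_0 f}{h}$ using the \emph{fixed-$h$} strong convergence established in the previous step. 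This is the place where \eqref{hp_RP_auto} and \eqref{hp_measure_to_0} actually earn their keep; your plan lists them as relevant but does not use them where they are needed, and the identification of the limit for varying $f_k$ cannot be done without them.
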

\begin{proof}
For the sake of simplicity,  we divide the proof in three steps.

\textbf{Step 1.} We have that
\begin{equation}\label{eq_same_spectrum}
    \sigma(\widetilde{R}_\e)=  \sigma(R_\e) \cup \{0\} \quad \text{ or }  \quad  \sigma(\widetilde{R}_\e)=  \sigma(R_\e).
\end{equation}
Indeed, if $R_\e \phi_\e= \la_\e \phi_\e$, then 
\begin{equation}
\widetilde{R}_\e E_\e \phi_\e= E_\e R_\e P_\e E_\e \phi_\e=  E_\e R_\e  \phi_\e =  \la_\e E_\e \phi_\e.
\end{equation}
Since the kernel of $E_\e$ is trivial, it follows that  $\sigma(R_\e) \subseteq \sigma(\widetilde{R}_\e)$.
On the other hand, if $\widetilde{R}_\e \phi_\e= \la_\e \phi_\e$, and $\la_\e \neq 0$, then $P_\e \phi_\e \neq 0$ and
\begin{equation}
 E_\e R_\e( P_\e \phi_\e)=  \la_\e E_\e (P_\e  \phi_\e).
\end{equation}
Hence   $\la_\e $ is a eigenvalue of $R_\e$ since $E_\e$ is injective. We conclude that  \eqref{eq_same_spectrum} holds.

\textbf{Step 2.} For any $u \in \mc{Z}_0$ we have that 
\begin{equation} \label{limit_L2_strong}
 \lim_{\e \to 0^+} \norm{\widetilde{R}_\e u-R_0 u}_{L^2(X,m)}=0.   
\end{equation} 
By \eqref{ineq_equi_bounded} and Proposition \ref{prop_compact_embedding}, for any $u \in \mc{Z}_0$ there exists a sequence $\widetilde{R}_{\e_{n}} u \rightharpoonup w$ weakly  in $\mc{F}$ as $n\to \infty$, for some $w \in \mc{F}$.
Then by \eqref{hp_limit_P}, and \eqref{hp_convergence_spectral_stab_bilinear}  for any $v \in Z_0$
\begin{equation}
(u,v)_{L^2(X,m)}=\lim_{n \to \infty}(P_{\e_n} u,v)_{L^2(Y_\e,m)}=\lim_{\e \to 0^+}\mc{E}^{(\e)}(R_\e P_\e u, v)= \mc{E}^{(0)}(w, v).
\end{equation}
Hence $\widetilde{R}_{\e_{n}} u \to R_0 u$,  strongly in $L^2(X,m)$ as $n\to \infty$, thanks to Proposition \ref{prop_compact_embedding}.
By the Urysohn Subsequence Principle we conclude that  \eqref{limit_L2_strong} holds.

\textbf{Step 3.}  We claim  that 
\begin{equation} \label{limit_operatorial_strong}
 \lim_{\e \to 0^+} \norm{\widetilde{R}_\e-R_0}_{\mc{L}(L^2(X,m))}=0.   
\end{equation}

By the compactness of the operator $\widetilde{R}_\e-R_0$, for any $\e \in [0,1]$ there exists a $f_\e \in L^2(X,m)$ with $\norm{f_\e}_{L^2(X,m)}=1$ such that 
\begin{equation}
\norm{\widetilde{R}_\e-R_0}_{\mc{L}(L^2(X,m))}=\norm{\widetilde{R}_\e f_\e-R_0f_\e}_{L^2(X,m)}.
\end{equation}
Hence, there exists a sequence  $f_{\e_n} \rightharpoonup f$ weakly for some $f \in L^2(X,m)$. By compactness of $R_0$ it follows that $R_0 f_{\e_n} \to R_0 f $ strongly in $L^2(X,m)$, as $n \to \infty$.
By \eqref{ineq_equi_bounded}, up to a subsequence, $\widetilde{R}_{\e_n}f_{\e_n} \to g$  strongly in  $L^2(X,m)$ for some $g\in \mc{F}$.

For any $h \in \mc{Z}_0$, thanks to \eqref{hp_RP_auto},
\begin{align}
&\ps{L^2(X,m)}{\widetilde{R}_{\e_n}f_{\e_n}}{h}=\ps{L^2(Y_\e,m)}{f_{\e_n}}{R_{\e_n} P_{\e_n} h}+ \ps{L^2(X\setminus Y_\e,m)}{\widetilde{R}_{\e_n}f_{\e_n}}{h}\\
&=\ps{L^2(X,m)}{f_{\e_n}}{\widetilde{R}_{\e_n}h} - \ps{L^2(X\setminus Y_\e,m)}{f_{\e_n}}{\widetilde{R}_{\e_n}h} + \ps{L^2(X\setminus Y_\e,m)}{\widetilde{R}_{\e_n}f_{\e_n}}{h}. 
\end{align}
Thanks to the absolute continuity of the integral, 
\begin{equation}
\ps{L^2(X\setminus Y_\e,m)}{f_{\e_n}}{\widetilde{R}_{\e_n}h}\to 0^+, \quad \ps{L^2(X\setminus Y_\e,m)}{\widetilde{R}_{\e_n}f_{\e_n}} {h} \to 0^+, \quad \text{ as } n \to \infty.
\end{equation}
By \eqref{limit_L2_strong}
\begin{equation}
\ps{L^2(X,m)}{g}{h}=\lim_{n\to \infty}\ps{L^2(X,m)}{\widetilde{R}_{\e_n}f_{\e_n}}{h}=\ps{L^2(X,m)}{f}{R_0 h}=\ps{L^2(X,m)}{R_0 f}{h}.
\end{equation}
Hence  $g=R_0f$. By Proposition \ref{prop_compact_embedding}, we conclude that the $\widetilde{R}_{\e_n}f_{\e_n} \to R_0f$ strongly in $L^2(X,m)$. 
By Urysohn's Subsequence Principle we conclude that \eqref{limit_operatorial_strong} holds.

In conclusion \eqref{eq_limit_eigen_stability} follows from Theorem \ref{theo_eigen_estimate} and \eqref{eq_same_spectrum}.
\end{proof}

\begin{remark}
In the applications of Theorem \ref{theo_exp_1_order} presented in Section \ref{sec_applications}, we will use either Theorem \ref{theo_spectral_stability} or min-max arguments to show that \eqref{hp_limit_simple} holds.
\end{remark}

\section{Applications}\label{sec_applications}

In the present section we discuss some applications of Theorem \ref{theo_exp_1_order}.
\subsection{A Robin-to-Neumann problem} In what follows, let $(M,g)$ be a complete Riemannian manifold of dimension $\textnormal{dim}(M)=n\geq 2$ and $\Omega\subset M$ a smooth bounded domain. We adopt the convention that $\D$ is the negative definite Laplace-Beltrami operator, so to have $\D=\frac{d^2}{dx^2}$ in $\R$. Moreover, we denote by $\textnormal{dv}$ the Riemannian volume density of $M$ and by $\textnormal{da}$ the $(n-1)$-dimensional area element.
Fix $\e\in(0,1]$ and c the following Robin eigenvalue problem
\begin{align}\label{Prob:Robin}
\begin{cases}
-\D u = \la u, & \inn \Omega,\\ 
\frac{\partial u}{\partial \nu}=-\e u, & \onn \partial \Omega,
\end{cases}
\end{align}
where $\nu$ is the outward pointing unit normal to $\partial \Omega$ and $\lambda_\e$ is a simple eigenvalue. Our aim is to show that we are able to predict, at the first order, the rate of convergence of the simple eigenvalues of \eqref{Prob:Robin} to that of the limit problem
\begin{align}\label{Prob:LimitNeumann0}
\begin{cases}
-\D u = \lambda u, & \inn \Omega, \\ 
\frac{\partial u}{\partial \nu}=0, & \onn \partial \Omega
\end{cases}
\end{align}
using the asymptotic expansion provided by Theorem \ref{theo_exp_1_order}. 
The complementary case, that is, the limit as $\e \to \infty$ was studied in detail in the very recent paper \cite{O_Robin}.

Since the first eigenvalue of the boundary value problem \eqref{Prob:LimitNeumann0} is 0, as stressed in Section \ref{sec_assump_preli} it is equivalent to consider the following family of problems 
\begin{align}\label{Prob:LimitNeumann}
\begin{cases}
-\D u +u= \lambda u, & \inn \Omega,
\\ \frac{\partial u}{\partial \nu}=-\e u, & \onn \partial \Omega,
\end{cases}
\end{align}
for $\e \in [0,1]$. Let $(\mc{E}^{(\e)},\mathcal{F}_\e)$ be the Dirichlet form associated to  problem \eqref{Prob:LimitNeumann}
\begin{align}
    \mc{E}^{(\e)}(u,v):= \int_\Omega \left[g(\nabla u, \nabla v)+uv \right] \dvol + \e \int_{\partial \Omega} uv \da
\end{align}
with domain $\mc{F}_\e:= H^1(\Omega)$.
Hence, fixed $\lambda_0$ a simple eigenvalue of problem \eqref{Prob:LimitNeumann} with $\e=0$ and $\phi_0$ a  associated normalized eigenfunction  we have
\begin{align*}
    \mc{E}^{(\e)}(\phi_0,u)=\lambda_0 \int_\Omega \phi_0 u \dvol + \e \int_{\partial \Omega} \phi_0 u \da\quad \text{ for any } u \in H^1(\Omega).
\end{align*}
Denoting by $L_\e :H^1(\Omega)\to \R$ the linear and continuous operator
\begin{align}
    L_\e (u):= \e \int_{\partial \Omega} \phi_0 u \da
\end{align}
and by $J_\e:H^1(\Omega)\to \R$ the functional
\begin{align}
    J_\e(u): & = \frac{1}{2}\mc{E}^{(\e)}(u)-L_\e (u)\\
             & = \int_\Omega \left[|\nabla u|^2 + u^2 \right] \dvol + \e \int_{\partial \Omega} u^2 \da- \e \int_{\partial \Omega} \phi_0 u \da,
\end{align}
by Proposition \ref{prob_J_min} we obtain the existence of the (unique) minimum $V_\e \in H^1(\Omega)$ of the minimization problem
\begin{align}
    \inf\{J_\e (u)\ :\ u\in H^1(\Omega)\}.
\end{align}
Now we show that  $V_\e\xrightarrow[]{\e\to 0}0$ in $H^1(\Omega)$. Indeed, by Proposition \ref{prob_J_min}, $V_\e$ satisfies
\begin{align}
    \mc{E}^{(\e)}(V_\e,u)=L_\e(u) \quad \text{ for any } u \in  H^1(\Omega),
\end{align}
i.e.
\begin{align}\label{Eq:RobinApp1}
    \int_\Omega \left[g(\nabla V_\e, \nabla u)+V_\e u\right] \dvol + \e \int_{\partial \Omega} V_\e u \da = \e \int_{\partial \Omega} \phi_0 u \da \quad \text{ for any } u \in H^1(\Omega).
\end{align}
Testing \eqref{Eq:RobinApp1} with $V_\e$ itself, it follows that
\begin{align}
    \int_\Omega \left[|\nabla V_\e |^2+V_\e^2 \right] \dvol + \e \int_{\partial \Omega} V_\e^2 \da = \e \int_{\partial \Omega} \phi_0 V_\e \da
\end{align}
implying
\begin{align}\label{Eq:RobinApp2}
    \hnorm{V_\e}{1}{\Omega}^2 &\leq  \e \int_{\partial \Omega} \phi_0 V_\e \dvol\\
    & \leq \e \lnorm{\phi_0}{2}{\partial \Omega} \lnorm{V_\e}{2}{\partial \Omega}.
\end{align}
As a consequence of the following classical inequality, we get
\begin{align}
    \hnorm{V_\e}{1}{\Omega}\xrightarrow[]{\e\to 0}0.
\end{align}

\begin{proposition}
Let $(M,g)$ be a complete Riemannian manifold and $\Omega \subset M$ a smooth domain. Then, there exists a positive constant $C_0>0$ so that for every $u\in H^1(\Omega)$
\begin{align}\label{Eq:Trace}
\int_{\partial \Omega} |u|^2 \da\leq C_0 \int_\Omega \left[|\nabla u|^2+u^2 \right] \dvol.
\end{align}
\end{proposition}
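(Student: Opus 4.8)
The plan is to reduce to smooth functions and then convert the boundary integral into a bulk integral by the divergence theorem, using an auxiliary vector field that agrees with the outward normal on $\partial\Omega$. Since $\Omega$ is a smooth bounded domain and $(M,g)$ is complete, $\overline{\Omega}$ is compact; moreover $C^\infty(\overline{\Omega})$ is dense in $H^1(\Omega)$. Hence it suffices to prove \eqref{Eq:Trace} for every $u\in C^\infty(\overline{\Omega})$, the general case following by approximation together with the fact that the inequality itself shows the trace operator $u\mapsto u|_{\partial\Omega}$ extends continuously to $H^1(\Omega)$.

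First I would construct a smooth, compactly supported vector field $X$ on $M$ with $X=\nu$ on $\partial\Omega$: this is obtained by extending the outward unit normal off the compact smooth hypersurface $\partial\Omega$ via a tubular neighbourhood and a cut-off. By compactness of $\overline{\Omega}$ the quantities $C_1:=\sup_{\overline{\Omega}}|\dive X|$ and $C_2:=\sup_{\overline{\Omega}}|X|$ are finite. Then, for $u\in C^\infty(\overline{\Omega})$, applying the divergence theorem to $|u|^2 X$ gives
\begin{align*}
\int_{\partial\Omega}|u|^2\da
=\int_{\partial\Omega}|u|^2\, g(X,\nu)\da
=\int_\Omega \dive\!\big(|u|^2 X\big)\dvol
=\int_\Omega\Big(|u|^2\,\dive X+2u\,g(X,\nabla u)\Big)\dvol .
\end{align*}

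Next I would estimate the right-hand side: bounding $|u|^2\,\dive X\le C_1\,u^2$ and using Young's inequality in the form $2|u|\,|g(X,\nabla u)|\le 2C_2|u|\,|\nabla u|\le |\nabla u|^2+C_2^2\,u^2$, one obtains
\begin{align*}
\int_{\partial\Omega}|u|^2\da\le\int_\Omega\Big(|\nabla u|^2+(C_1+C_2^2)\,u^2\Big)\dvol\le C_0\int_\Omega\big(|\nabla u|^2+u^2\big)\dvol,
\end{align*}
with $C_0:=\max\{1,\,C_1+C_2^2\}$, which is the assertion for smooth $u$ and hence, by density, for all $u\in H^1(\Omega)$.

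The only genuinely delicate point is the construction of the extension field $X$ and the finiteness of $C_1,C_2$; this is where the smoothness of $\partial\Omega$ and the compactness of $\overline{\Omega}$ (guaranteed by the boundedness of $\Omega$ together with the completeness of $M$) enter. Everything else --- the divergence theorem on a Riemannian manifold, Young's inequality, and the density of $C^\infty(\overline{\Omega})$ in $H^1(\Omega)$ --- is standard, and I would simply invoke appropriate references rather than reprove it; I expect no real obstacle beyond bookkeeping of the constants.
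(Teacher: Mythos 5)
Your proof is correct, and it follows a genuinely different route from the paper's. The paper does not actually supply a self-contained argument: for $\dim M\ge 3$ it cites a trace inequality of Li and Zhu, and in general it reduces to the classical Euclidean trace theorem via a finite atlas of boundary charts on the compact set $\overline\Omega$. You instead give a direct, chart-free Rellich-type argument: extend the outward unit normal to a compactly supported vector field $X$ via a tubular neighbourhood of the compact hypersurface $\partial\Omega$, apply the Riemannian divergence theorem to $|u|^2X$, observe that $g(X,\nu)=1$ on $\partial\Omega$, and then absorb the resulting bulk terms with Young's inequality, getting $C_0=\max\{1,\,\sup_{\overline\Omega}|\dive X|+\sup_{\overline\Omega}|X|^2\}$. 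Both proofs hinge on the compactness of $\overline\Omega$ (which, as you correctly note, follows from boundedness plus completeness via Hopf--Rinow) and on the density of $C^\infty(\overline\Omega)$ in $H^1(\Omega)$. Your argument is more elementary and uniform in the dimension, and it makes the dependence of $C_0$ on the geometry of $\partial\Omega$ explicit through the extension field $X$; the paper's approach is shorter on the page but outsources the real work to external references. The only step you leave implicit --- the existence of the tubular neighbourhood extension and the finiteness of $C_1,C_2$ --- is indeed standard for a compact smooth hypersurface, so there is no gap.
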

\noindent If the dimension of $M$ is at least $3$, then the results is contained in \cite[Proposition 1.1]{LiZhu}. Otherwise, in any dimension, it can be deduce from classical trace results in smooth bounded domain in $\R^N$, see \cite[Theorem 18.1]{L_book_2_edi}, together with the choice of a finite cover of local charts whose closures smoothly intersect the boundary $\partial \Omega$.
\smallskip

Let $\lambda_{n,\e}$ be the $n$-th eigenvalue of problem \eqref{Prob:Robin} for any $\e \in [0,1]$.
\begin{align}
    \lambda_{n,\e}= \inf_{\underset{\dim(A)=n}{A\subset H^1(\Omega)}} \sup_{\underset{\lnorm{u}{2}{ \Omega }=1}{u\in A}} \left(\int_\Omega \left[|\nabla u|^2 + u^2\right] \dvol + \e \int_{\partial \Omega} u^2 \da \right),
\end{align}
where the infimum is taken over all the $n$-dimensional subspaces $A$ of $H^1(\Omega)$. If $A\subset H^1(\Omega)$ is a fixed $n$-dimensional vector space, then by \eqref{Eq:Trace} one has
\begin{align}
    \int_\Omega \left[|\nabla u|^2 + u^2\right] \dvol & \leq  \int_\Omega \left[|\nabla u|^2 + u^2\right] \dvol + \e \int_{\partial \Omega} u^2 \da\\
    &\leq (1+\e C) \int_\Omega \left[|\nabla u|^2 + u^2\right] \dvol \quad \quad \quad \quad \text{ for any } u \in A,
\end{align}
obtaining
\begin{align}
    \sup_{\underset{\lnorm{u}{2}{ \Omega }=1}{u\in A}} \int_\Omega \left[|\nabla u|^2 + u^2\right] \dvol &\leq \sup_{\underset{\lnorm{u}{2}{ \Omega }=1}{u\in A}} \left(\int_\Omega \left[|\nabla u|^2 + u^2\right] \dvol + \e \int_{\partial \Omega} u^2 \da \right)\\
    &\leq (1+\e C) \sup_{\underset{\lnorm{u}{2}{ \Omega }=1}{u\in A}} \int_\Omega \left[|\nabla u|^2 + u^2\right] \dvol.
\end{align}
Taking the infimum over all the $n$-dimensional subspaces $A\subset H^1(\Omega)$, it follows
\begin{align}
    \lambda_{0,n}\leq \lambda_{\e,n}\leq (1+\e C) \lambda_{0,n}
\end{align}
that implies
\begin{align}
    \lim_{\e\to 0} \lambda_{\e,n}=\lambda_{0,n}.
\end{align}
As a consequence, by Theorem \ref{theo_exp_1_order}, we get
\begin{align}
    \la_{\e,n}-\la_{0,n}=\la_{0,n} \int_{\Omega} V_\e \phi_0 \dvol +O\left(\lnorm{V_\e}{2}{\Omega}^2 \right)\quad \textnormal{as } \e \to 0.
\end{align}

We study the asymptotic behavior of $V_\e$ in order to obtain a more explicit description of the eigenvalue variation. To this aim, let us define
\begin{align*}
    \widetilde{V}_\e (x):= \frac{V_\e(x)}{\e}.
\end{align*}
Firstly, we observe that by \eqref{Eq:RobinApp1}
\begin{align}\label{Eq:RobinApp3}
    \int_\Omega \left[g(\nabla \widetilde{V}_\e, \nabla u)+\widetilde{V}_\e u\right] \dvol + \e \int_{\partial \Omega} \widetilde{V}_\e u \da = \int_{\partial \Omega} \phi_0 u \da
\end{align}
for every $u\in H^1(\Omega)$, implying, by testing the equation with $u=\widetilde{V}_\e$,
\begin{align}
    \int_\Omega \left[ \left|\nabla \widetilde{V}_\e\right|^2+\widetilde{V}_\e^2\right] \dvol \leq \int_{\partial \Omega} \phi_0 \widetilde{V}_\e \da.
\end{align}
By H\"older inequality and \eqref{Eq:Trace}, we get
\begin{align}
    \hnorm{\widetilde{V}_\e}{1}{\Omega}^2 &\leq \lnorm{\phi_0}{2}{\partial \Omega} \lnorm{\widetilde{V}_\e}{2}{\partial \Omega}\\
    &\leq C \lnorm{\phi_0}{2}{\partial \Omega} \hnorm{\widetilde{V}_\e}{1}{\Omega}
\end{align}
i.e.
\begin{align}
    \hnorm{\widetilde{V}_\e}{1}{\Omega} \leq C \lnorm{\phi_0}{2}{\partial \Omega},
\end{align}
thus  the family $\{\widetilde{V}_\e\}_\e$ is bounded in $H^1(\Omega)$. Whence, there exists a sequence $\e_n \to 0^+$ as $n\ to \infty$ and a function $\widetilde{V}\in H^1(\Omega)$ such that $\widetilde{V}_{\e_n}\xrightharpoonup{H^1(\Omega)} \widetilde{V}$ weakly  in $H^1(\Omega)$ as $n\to \infty$. 

Passing to the limit in \eqref{Eq:RobinApp3}, it follows that 
\begin{align}\label{Eq:RobinApp4}
    \int_\Omega \left[g(\nabla \widetilde{V}, \nabla u)+\widetilde{V} u\right] \dvol = \int_{\partial \Omega} \phi_0 u \da
\end{align}
for every $u\in H^1(\Omega)$.

\begin{remark}\label{Rmk:RobinApp1}
    We stress that \eqref{Eq:RobinApp4} has at most one solution. Indeed, if $\widetilde{V}$ and $\widetilde{W}$ are two functions satisfying the equation above, then the function $\widetilde{U}:=\widetilde{V}-\widetilde{W}$ satisfies
    \begin{align*}
        \int_\Omega \left[g(\nabla \widetilde{U}, \nabla u)+\widetilde{U} u\right] \dvol = 0
    \end{align*}
    for every $u\in H^1(\Omega)$, implying $\widetilde{U}=0$ and hence $\widetilde{V}=\widetilde{W}$.
\end{remark}

As a consequence of Remark \ref{Rmk:RobinApp1}, it follows that every subsequence $\{\e_n\}_n$ so that $\{\widetilde{V}_{\e_n}\}_n$ converges weakly must have $\widetilde{V}$ as limit function. In particular, by the Urysohn Subsequence Principle it follows that $\widetilde{V}_{\e}\xrightharpoonup{H^1(\Omega)} \widetilde{V}$ weakly  in $H^1(\Omega)$ and not only along the sequence $\{\e_n\}_n$. By \eqref{Eq:RobinApp3} we get
\begin{align}
    \lim_{\e \to 0^+} \hnorm{\widetilde{V}_\e}{1}{\Omega}^2 &= \lim_{\e \to 0^+} \left[ \int_{\partial \Omega} \phi_0 \widetilde{V}_\e \da - \e \int_{\partial \Omega} |\widetilde{V}_\e|^2 \da \right]\\
    &= \int_{\partial \Omega} \phi_0 \widetilde{V} \da\\
    &= \hnorm{\widetilde{V}}{1}{\Omega}^2,
\end{align}
where the last equality follows by \eqref{Eq:RobinApp4}. We also observe that, by the very definition of $\widetilde{V}_\e$, this exactly means that
\begin{align}\label{Eq:RobinApp5}
    \hnorm{V_\e}{1}{\Omega}^2 = O(\e^2)\quad \textnormal{as}\ \e \to 0.
\end{align}
Moreover, since in any Hilbert space the weak convergence together with the convergence of the norm implies the strong convergence, we get that
\begin{align}
    \widetilde{V}_\e \xrightarrow{\e \to 0} \widetilde{V} \quad \text{ strongly in } H^1(\Omega).
\end{align}
Lastly, we stress that by \eqref{Eq:RobinApp4}
\begin{align}
    \lim_{\e \to 0} \lambda_0 \int_\Omega \e^{-1} V_\e \phi_0 \dvol &=\lambda_0 \int_\Omega \widetilde{V} \phi_0 \dvol\\
    &= \int_\Omega \left[ g(\nabla \widetilde{V}, \nabla \phi_0)+ \widetilde{V} \phi_0\right] \dvol\\
    &= \int_\Omega |\phi_0|^2 \da
\end{align}
which, together with \eqref{Eq:RobinApp5}, provides the following expression
\begin{align*}
    \lambda_{\e,n}-\lambda_{0,n}=\e \int_{\partial \Omega} |\phi_0|^2 \da + O(\e^2) \quad \textnormal{as}\ \e \to 0.
\end{align*}

\subsection{Conformal transformations}
Let $(M,g)$ be a complete Riemannian manifold and $\Omega\subseteq M$ a compact domain. Fixed a smooth function
\begin{align}
    \Phi:[0,1]\times M &\to \mathbb{R}\\ (\e,p)&\mapsto \Phi_\e(p)
\end{align}
so that $\Phi_\e\to 0$ uniformly in $\Omega$, consider the following family of metrics on $M$
\begin{align}
    g_\e:=e^{2\Phi_\e} g
\end{align}
which are conformal to $g=g_0$.

To every $\e\in [0,1]$ we can associate the following eigenvalue problems
\begin{align}\label{Prob:Conformal}
    \begin{cases}
        -\Delta_\e u = \lambda u, & \textnormal{in}\ \Omega,\\ 
        u =0, & \textnormal{on}\ \partial \Omega,
    \end{cases}
\end{align}
where by $\Delta_\e$ we mean the Laplace-Beltrami operator associated to the metric $g_\e$.

For any fixed $\e\in [0,1]$, the bilinear form associated to the problem \eqref{Prob:Conformal} is $(\mc{E}^{(\e)},\mathcal{F}_\e)$, where $\mathcal{F}_\e=H^1_0(\Omega,\textnormal{dv}_\e)$ and
\begin{align*}
    \mathcal{E}^{(\e)}(u,v):=\int_\Omega g_{\e}(\nabla_\e u, \nabla_\e v)\dvol_\e\quad \text{ for any } u,v\in \mathcal{F}_\e.
\end{align*}
Since $\Omega$ is compact, it is a standard fact (see \cite{He96}) that $H_0^1(\Omega,\textnormal{dv}_\e)$ does not depend on the metric $g_\e$. Hence, in the following we consider the bilinear forms $\mc{E}^{(\e)}$ as acting over the same domain $\mc{F}:=\mc{F}_0$. Clearly, assumptions \eqref{A2}, \eqref{hp_la_1>0} and \eqref{hp_uin_F0_u_in_Fe} are satisfied.

We recall the following useful identities
\begin{align*}
    g_\e^{-1}=e^{-2\Phi_\e} g^{-1}, \quad \quad \nabla_\e = e^{-2\Phi_\e} \nabla \quad \quad \textnormal{and} \quad \quad \dvol_\e = e^{n\Phi_\e} \dvol.
\end{align*}
As a consequence, using the fact that $\Phi_\e$ converges uniformly to $0$ on $\Omega$, we have the following control on the $H^1_0$ norms
\begin{align}\label{Conformal_equiv_norms}
e^{-(n-2)} \norm{u}_{H_0^1(\Omega,\dvol_\e)}  \leq \norm{u}_{H_0^1(\Omega,\dvol)} \leq e^{n-2} \norm{u}_{H_0^1(\Omega,\dvol_\e)}
\end{align}
for $\e$ small enough.

Fix $\lambda_0$ a simple eigenvalue (with associated normalized eigenfunction $\phi_0$) to the limit problem
\begin{align}
\begin{cases}
-\Delta_0 u = \lambda_0 u, & \inn \Omega, \\ 
u=0 & \onn \partial \Omega.
\end{cases}
\end{align}
In particular, for every $\e \in [0,1]$
\begin{align*}
    \mathcal{E}^{(\e)}(\phi_0, u)=\lambda_0 \int_\Omega \phi_0 u \dvol + \int_\Omega g_\e (\nabla_\e \phi_0, \nabla_\e u) \dvol_\e - \int_\Omega g(\nabla \phi_0, \nabla u) \dvol \quad \text{ for any } u\in H_0^1(\Omega),
\end{align*}
showing that condition \eqref{eq_Vehi0_in_Ze} is satisfied, where $L_\e:H^1_0(\Omega)\to \mathbb{R}$ is given by the linear and continuous operator
\begin{align*}
    L_\e(u):=\int_{\Omega} g_\e ( \nabla_\e \phi_0, \nabla_\e u) \dvol_\e - \int_\Omega g(\nabla \phi_0, \nabla u) \dvol.
\end{align*}
Denoting by $J_\e:H^1_0(\Omega)\to \mathbb{R}$ the functional
\begin{align*}
    J_\e(u):&=\frac{1}{2} \mc{E}^{(\e)}(u)- L_\e(u)\\
    &=\frac{1}{2}\int_\Omega g_{\e}(\nabla_\e u, \nabla_\e u)\dvol_\e -\int_{\Omega} g_\e ( \nabla_\e \phi_0, \nabla_\e u) \dvol_\e + \int_\Omega g(\nabla \phi_0, \nabla u) \dvol,
\end{align*}
let $V_\e\in H^1_0(\Omega)$ be the function provided by Proposition \ref{prop_J_min}. We start by showing  that
\begin{align}\label{Conformal:V_e->0}
    \lim_{\e\to 0} ||V_\e||_{L^2(M,\textnormal{dv}_\e)}=0.
\end{align}
By Proposition \ref{prop_J_min}, we have
\begin{align}
    \mc{E}^{(\e)}(V_\e,u)=L_\e(u)\quad \text{ for any } u\in H^1_0(M)
\end{align}
and hence
\begin{align}\label{Conformal_Eq_Ve}
    \int_\Omega g_\e(\nabla_\e V_\e , \nabla_\e u)\dvol_\e &= \int_\Omega g_\e (\nabla_\e \phi_0, \nabla_\e u) \dvol_\e - \int_\Omega g(\nabla \phi_0, \nabla u)\dvol\\
    &=\int_\Omega \left(e^{(n-2)\Phi_\e} -1\right) g(\nabla \phi_0, \nabla u)\dvol
\end{align}
for any $u\in H^1_0(\Omega)$. Testing previous equality with $u=V_\e$, we get
\begin{align}
    \int_\Omega g_\e(\nabla_\e V_\e, \nabla_\e V_\e) \dvol_\e &\leq \int_\Omega \left|e^{(n-2)\Phi_\e} -1\right| |g(\nabla \phi_0, \nabla V_\e)|\dvol\\
    &\leq \max_{\overline{\Omega}}\left| e^{(n-2)\Phi_\e}-1\right| \lambda_0^{\frac{1}{2}} \left(\int_\Omega g(\nabla V_\e, \nabla V_\e) \dvol \right)^\frac{1}{2}\\
    &=\max_{\overline{\Omega}}\left| e^{(n-2)\Phi_\e}-1\right| \lambda_0^{\frac{1}{2}}  \left(\int_\Omega \frac{e^{(n-2)\Phi_\e}}{e^{(n-2)\Phi_\e}} g(\nabla V_\e, \nabla V_\e) \dvol \right)^\frac{1}{2}\\
    &\leq \max_{\overline{\Omega}}\left| e^{(n-2)\Phi_\e}-1\right| \lambda_0^{\frac{1}{2}} \frac{1}{e^{\left[\frac{n-2}{2}\min_{\overline{\Omega}}\Phi_\e\right]}} \left(\int_\Omega g_\e(\nabla_\e V_\e, \nabla_\e V_\e) \dvol_\e \right)^\frac{1}{2} 
\end{align}
that implies 
\begin{align}\label{Test_Conformal}
        \left(\int_\Omega g_\e(\nabla_\e V_\e, \nabla_\e V_\e) \dvol_\e \right)^\frac{1}{2} \leq \max_{\overline{\Omega}}\left| e^{(n-2)\Phi_\e}-1\right|\frac{1}{e^{\left[\frac{n-2}{2}\min_{\overline{\Omega}}\Phi_\e\right]}} \lambda_0^{\frac{1}{2}} 
\end{align}
and, by Proposition \ref{prop_poin},
\begin{align}
    ||V_\e||_{L^2(\Omega,\textnormal{dv}_\e)} & \leq \max_{\overline{\Omega}}\left| e^{(n-2)\Phi_\e}-1\right|  \frac{1}{e^{\left[\frac{n-2}{2}\min_{\overline{\Omega}}\Phi_\e\right]}}\left(\lambda_\e^{-1}\lambda_0\right)^{\frac{1}{2}}\\
    &\leq \left(e^{(n-2)\norm{\Phi_\e}_{L^\infty(\Omega)}}-1\right)\frac{1}{e^{\left[\frac{n-2}{2}\min_{\overline{\Omega}}\Phi_\e\right]}}\left(\lambda_\e^{-1}\lambda_0\right)^{\frac{1}{2}}\\
    &=O\left(e^{(n-2)\norm{\Phi_\e}_{L^\infty(\Omega)}}-1 \right) \quad \quad \textnormal{as}\ \e \to 0\\
    &=O\left(\norm{\Phi_\e}_{L^\infty(\Omega)} \right) \quad \quad \textnormal{as}\ \e \to 0,
\end{align}
since $\norm{\phi_\e}_{L^\infty(\Omega)}\xrightarrow[]{\e \to 0}0$, implying that condition \eqref{hp_limit_E_e_Ve_o} is satisfied.

To prove the stability of the spectrum, i.e. condition \eqref{hp_limit_simple}, we start by observing that
\begin{align*}
    \frac{e^{(n-2) \min_{\overline{\Omega}}\Phi_\e}}{e^{n \max_{\overline{\Omega}}\Phi_\e}} \frac{\int_\Omega g(\nabla u, \nabla u) \dvol}{\int_\Omega u^2 \dvol} \leq \frac{\int_\Omega g_\e (\nabla_\e u, \nabla_\e u)\dvol_\e}{\int_\Omega u^2 \dvol_\e}\leq \frac{e^{(n-2) \max_{\overline{\Omega}}\Phi_\e}}{e^{n\min_{\overline{\Omega}}\Phi_\e}} \frac{\int_\Omega g(\nabla u, \nabla u) \dvol}{\int_\Omega u^2 \dvol}
\end{align*}
Hence with   min-max argument, 
\begin{align*}
    \frac{e^{(n-2) \min_{\overline{\Omega}}\Phi_\e}}{e^{n \max_{\overline{\Omega}}\Phi_\e}} \lambda_{n,0} \leq \la_{n,\e}\leq \frac{e^{(n-2) \max_{\overline{\Omega}}\Phi_\e}}{e^{n\min_{\overline{\Omega}}\Phi_\e}} \lambda_{n,0}
\end{align*}
and so $\lambda_{n,\e}\to \lambda_{n,0}$ as $\e \to 0$ for any $n \in \mb{N}$.

If $\lambda_0$ is simple, by Theorem \ref{theo_exp_1_order} we have that 
\begin{align}
    \lambda_\e -\lambda_0 = \frac{\lambda_0 \int_\Omega \phi_0 V_\e \dvol_\e + O\left( ||V_\e||^2_{L^2 ( \Omega,\textnormal{dv}_\e)} \right)}{\int_\Omega \phi_0^2 \dvol_\e + O\left( ||V_\e||_{L^2 ( \Omega,\textnormal{dv}_\e)}\right)} \quad \quad \textnormal{as}\ \e \to 0.
\end{align}
Noticing that
\begin{align}
    \left|1- \int_\Omega\phi_0^2 \dvol_\e \right|&= \left|\int_\Omega (1-e^{n\Phi_\e}) \phi^2_0 \dvol \right|\\
    &\leq \max_{\overline{\Omega}} \left|1-e^{n\Phi_\e}\right| \int_\Omega \phi_0^2 \dvol\\
    &=\max_{\overline{\Omega}}\left|1-e^{n\Phi_\e}\right|\\
    &=O\left(\norm{\Phi_\e}_{L^\infty(\Omega)} \right) \quad \quad \textnormal{as}\ \e \to 0,
\end{align}
as in Remark \ref{Rmk:Accurate1Exp} it follows that
\begin{align}
\lambda_\e - \lambda_0 &=\left[ \lambda_0 \int_\Omega \phi_0 V_\e \dvol_\e +O\left( \norm{\Phi_\e}_{L^\infty(\Omega)}^2 \right) \right] 
\left( 1+O\left( \norm{\Phi_\e}_{L^\infty(\Omega)} \right)\right)\\
 &=\lambda_0 \int_\Omega \phi_0 V_\e \dvol_\e +O\left( \norm{\Phi_\e}_{L^\infty(\Omega)}^2 \right) \quad \quad \textnormal{as}\ \e \to 0.
\end{align}

Under additional assumption on $\Phi_\e$, we  can obtain a more precise precise result.
Let us suppose that 
\begin{align}\label{conformal_factor_conditions}
\norm{\Phi_\e}_{L^\infty(\Omega)}   =o\left(\sqrt{\e}\right)\quad \textnormal{as}\ \e \to 0,
\end{align}
and
\begin{align}\label{hp_Phi_incremental}
\left|\frac{e^{(n-2)\Phi_\e(x)}-1}{\e}\right|\le  h(x) \quad \text{ for a.e. } x \in \Omega, \text{ and any } \e \text{ small},
\end{align}
where $h \in L^2(\Omega)$. Furthermore, we require that for any $x \in \Omega$ there exists the limit
\begin{align}\label{hp_Phi_precise}
\lim_{\e \to 0^+}\frac{\Phi_\e(x)}{\e}=: \Psi(x)
\end{align}
and that $\Psi \in L^\infty(\Omega)$. 

In view of \eqref{Conformal_equiv_norms}, \eqref{Test_Conformal} and  \eqref{hp_Phi_precise} we have that the family $\left\{\frac{V_\e}{\e}\right\}_{\e \in [0,1]}$ is bounded  in $H^1_0(\Omega)$. In particular, there exists a subsequence $\frac{V_{\e_n}}{\e_n}$ and 
$\widetilde{V} \in H^1_0(\Omega)$ such that $\frac{V_{\e_n}}{\e_n} \rightharpoonup \widetilde{V}$ weakly in $H_0^1(\Omega)$ as $n\to \infty$.

Then by the Dominated Converge Theorem, \eqref{hp_Phi_incremental} and \eqref{hp_Phi_precise} we can pass to the limit in \eqref{Conformal_Eq_Ve}. It follows that $\tilde{V}$ solves the equation
\begin{equation}
\int_{\Omega} g(\nabla \tilde{V}, \nabla u) \dvol = (n-2)\int_{\Omega} \Psi\  g(\nabla \phi_0, \nabla u) \dvol.
\end{equation}
Since the solution to the equation above is unique, by the Urysonh subsequence principle, we can see that $\frac{V_{\e}}{\e} \rightharpoonup \widetilde{V}$ weakly in $H_0^1(\Omega)$ as $\e \to 0^+$.

Finally by the Dominated Converge Theorem 
\begin{align}
\lim_{\e \to 0^+} \lambda_0 \int_\Omega \phi_0 \frac{V_\e}{\e} \dvol_\e
&= \lambda_0 \int_\Omega \phi_0 \widetilde{V} \dvol\\
&= \int_\Omega g(\nabla \widetilde{V}, \nabla \phi_0) \dvol\\
&= (n-2) \int_\Omega \Psi\ g(\nabla \phi_0, \nabla \phi_0) \dvol.
\end{align}
Hence
\begin{align}
    \lambda_\e - \lambda_0 = \e (n-2)\int_\Omega \Psi\ g(\nabla \phi_0, \nabla \phi_0) \dvol + o(\e) \quad \text{ as }\e \to 0^+,
\end{align}
in view of \eqref{conformal_factor_conditions}.

\begin{remark}\label{rem_berg}
Given a family of metrics $\{g_\e\}_{\e \in [0,1]}$, if it is known  a priory that the family of associated eigenvalues $\{\la_{\e,n}\}_{\e \in [0,1]}$ and  eigenfunctions $\{\phi_{\e,n}\}_{\e \in [0,1]}$ are smooth with respect to $\e$, which for example happens whenever $\{g_\e\}_{\e \in [0,1]}$ is analytic with respect to $\e$, then a second order estimate was obtained in \cite{Be73}. See also \cite{ESI08}.
\end{remark}

\subsection{Dirichlet forms}\label{subsect_Diri_forms}
Let $(X,d,m)$ be a locally compact and separable measure metric space, where $m$ is a positive Radon measure defined on the Borel $\sigma$-algebra $\mathcal{B}$ of $X$. Consider $(\mc{E},\mc{F})$ a Dirichlet form associated to the linear, positive, self-adjoint and densely defined operator $H$. Moreover, suppose that $H$ has a discrete spectrum $\{\lambda_n\}_{n \in \mb{N}}$ with associated orthonormal basis of eigenfunctions $\{\phi_n\}_{n \in \mb{N}}$ of $L^2(X,m)$. In view of Remark \ref{Rmk:Accurate1Exp}, it is not restrictive to suppose that $\lambda_1>0$, and so $R:=H^{-1}$ is a well defined, positive, self-adjoint, bounded and compact operator.
Let
\begin{equation}\label{def_C0}
C_c(X):=\{ u: X \to \mb{R}\ :\  u \text{ is continuous with compact support}\}.
\end{equation}
We assume that 
\begin{equation}\label{AS_density}
C_c(X) \cap \mc{F} \text{ is dense in } \mc{F} \text{ with respect to the norm } \mc{E}_1.
\end{equation}
Given a Dirichlet form $(\mc{E}, \mc{F})$ and a function $f\in \mc{F}$, we recall that the $f$-capacity of a set $K\subset X$ is defined as
\begin{align}\label{prob_min_cap}
\text{Cap}_f(K):= \inf \left\{\mc{E}(u) \ :\ u \in \mc{F},\ \widetilde{u}\geq \widetilde{f}\ \text{q.e. in}\ K\right\},
\end{align}
where by $\widetilde{u}$ we mean the quasi-continuous representative of the function $u\in \mc{F}$. We recall that every element of $\mc{F}$ has a quasi-continuous representative with respect to the classical notion of capacity associated to $(\mc{E}, \mc{F})$, see for example \cite{FM_Dirichlet_forms_book}.

In the spirit of \cite{ACF_spectral_stability_AB}, we consider family  $\{K_\e\}_{\e \in [0,1]}$  of compact subsets of $X$ such that:
\begin{itemize}
    \item [(i)] $\capa{}{K_0}:=\capa{1}{K_0}=0$
    \item[(ii)] the family $\{K_\e\}_{\e \in (0,1]}$ concentrates at $K_0$, that is, for every open neighbourhood $U$ of $K_0$ there exists $\e_0\in (0,1]$ so that $K_\e \subset U$ for every $\e<\e_0$.
\end{itemize}
Heuristically, the simpler example of concentrating family of compact sets are shrinking holes but the   assumption that $\{K_\e\}_{\e \in (0,1]}$ concentrates at $K_0$ holds in many other cases. 
For example if $K_\e \to K_0$ in the sense of Hausdorff as $\e \to 0^+$, then it is easy to see that $K_\e$ concentrate at $K_0$. On the other hand concentration of sets is a more general notion, since for example  if $\{K_\e\}_{\e \in (0,1]}$ concentrates at $K_0$ then it also concentrate to any set $\tilde{K_0}$ such that $K_0\subseteq \tilde{K_0}$ while the limit in sense of Hausdorff is unique on compact sets.

Let
\begin{align}
Z_\e:=\{u\in \mc{F}\ :\tilde{u}=0\ \text{q.e. on}\ K_\e\}.
\end{align}
It is easy to see that $\mc{Z}_\e$, the closure of $Z_\e$ with respect to the norm of $L^2(X,m)$, is 
\begin{equation}
\{u\in L^2(X,m)\ :\ u=0\ \text{a.e. on}\ K_\e\}.
\end{equation}
$(\mc{E},Z_\e)$ is still a Dirichlet form, see \cite{FM_Dirichlet_forms_book}, and we denote by $H_\e$ its positive, densely defined (in $\mc{Z}_\e$), self-adjoint operator. 
We may suppose that $R_\e:=H_\e^{-1}$ is a well defined and bounded operator thanks to Remark \ref{Rmk:Accurate1Exp}.  In particular,  since $R_\e$ takes values in $\mc{F}$ by Proposition \ref{prop_compact_embedding} it follows that 
\begin{equation}
 R_\e:\mc{Z}_\e \to \mc{Z}_\e   
\end{equation}
is compact. Hence its spectrum is discrete. Let  $\{\la_{\e,n}\}_{n \in \mathbb{N}}$ be the discrete spectrum of $H_\e$.

Fix a simple eigenvalue $\la:=\la_{n}$ of $(\mc{E},\mc{F})$ and let $\phi$ be a correspondent eigenfunction with $\norm{\phi}_{L^2(X,m)}=1$.
Clearly for any $w \in Z_\e$
\begin{equation}\label{eq_phi0_Dir_form}
\mc{E}(\phi,w)=\la (\phi,w)_{L^2(X,m)}
\end{equation}
and so $L_\e=0$. Let $J_\e$ be as in \eqref{def_J_ef} and consider the function $V_\e \in \mc{F}$ given by Proposition \ref{prop_J_min}. $V_\e$ coincides with the classical capacitary potential $V_\e^\phi$ associated to the $\phi$-capacity $\capa{\phi}{K_\e}$. Indeed $V_\e^\phi$ solves the minimization problem \eqref{prob_min_cap} and so, since $V_\e^\phi=\phi$ on $K_\e$, it solves \eqref{prob_J_min}, which admits a unique solution.
We remark that $V_\e$ satisfies 
\begin{equation}\label{eq_Ve_Dir_form}
\mc{E}(V_\e,w)=0  \quad \text{ for any } w \in Z_\e.
\end{equation}

In order to prove the stability of the spectrum of $H$ and the validity of \eqref{hp_limit_E_e_Ve_o}, we start with the following technical lemma.
\begin{lemma}\label{time_consuming_lemma}
Let $K$ be a compact subset of $X$ and suppose that $\capa{}{K}=0$. Then the set $Z_K:=\{u \in \mc{F}: \text{ such that } K \cap\mathop{\rm{supp}}{u}= \emptyset \}$ is dense in $\mathcal{F}$ endowed with its weak topology. 
\end{lemma}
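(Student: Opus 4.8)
The goal is to show that $Z_K = \{u \in \mc{F} : K \cap \mathop{\rm{supp}} u = \emptyset\}$ is dense in $\mc{F}$ with respect to the weak topology of the Hilbert space $(\mc{F}, \mc{E}_1)$. Since $\mc{F}$ is a Hilbert space, weak density of a \emph{convex} set is equivalent to strong density, so the plan is to prove that $Z_K$ is dense in $\mc{F}$ with respect to the norm $\mc{E}_1^{1/2}$; one should first check that $Z_K$ is convex (it is: if $\mathop{\rm{supp}} u$ and $\mathop{\rm{supp}} v$ are disjoint from $K$, so is $\mathop{\rm{supp}}(tu+(1-t)v) \subseteq \mathop{\rm{supp}} u \cup \mathop{\rm{supp}} v$). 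Actually, it is cleaner to argue directly: the weak closure of $Z_K$ contains the strong closure, and the strong closure is a closed linear subspace, so it suffices to produce, for an arbitrary $u \in \mc{F}$, a sequence in $Z_K$ converging to $u$ in $\mc{E}_1$-norm — or at least converging weakly, which is what we ultimately need.

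First I would reduce to $u \in C_c(X) \cap \mc{F}$: by assumption \eqref{AS_density} this intersection is $\mc{E}_1$-dense in $\mc{F}$, so it is enough to approximate every such $u$. Next, since $\capa{}{K} = \capa{1}{K} = 0$, there exists a sequence $w_j \in \mc{F}$ with $\widetilde{w_j} \ge 1$ q.e.\ on (a neighbourhood of) $K$ and $\mc{E}_1(w_j) \to 0$; by the standard truncation properties of Dirichlet forms we may take $0 \le \widetilde{w_j} \le 1$ q.e., and $w_j \to 0$ strongly in $\mc{F}$. The natural candidate approximants are then $u_j := u - (u \wedge w_j) \cdot \mathbf{1}$-type cut-offs; more precisely, since $u \in C_c(X)$ is bounded, say $|u| \le M$, set $u_j := u - \big((-Mw_j) \vee (u \wedge (Mw_j))\big)$, which lies in $\mc{F}$ because Dirichlet forms are stable under the lattice operations $\wedge, \vee$ with constants. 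On the set where $\widetilde{w_j} \ge 1$ — a neighbourhood of $K$ in the q.e.\ sense — one has $\widetilde{u_j} = 0$ q.e., hence $\widetilde{u_j}$ vanishes q.e.\ on $K$; one then has to pass from "vanishes q.e.\ on $K$" to "support disjoint from $K$", which requires a further cut-off using the continuity of $u$ and local compactness of $X$ (choose a compact neighbourhood, multiply by a $C_c$ cutoff). Finally $u_j \to u$ in $\mc{E}_1$ because $Mw_j \to 0$ strongly in $\mc{F}$ and the truncation is $\mc{E}_1$-contractive in an appropriate sense, or at least $u_j \rightharpoonup u$ weakly, which suffices.

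The main obstacle is the passage from the capacitary condition "$\widetilde{u_j} = 0$ quasi-everywhere on $K$" to the strictly stronger topological condition "$\mathop{\rm{supp}} u_j$ is disjoint from $K$", together with making the truncation argument genuinely produce elements of $\mc{F}$ with controlled $\mc{E}_1$-norm. The subtlety is that subtracting the capacitary potential kills $u$ only q.e.\ on $K$, not on an open neighbourhood, so one must combine: (a) the fact that the potentials $w_j$ can be chosen $\ge 1$ q.e.\ on an \emph{open} set containing $K$ (this is built into the definition of capacity via open sets, or obtainable by an extra regularization), and (b) a genuine cut-off by a function in $C_c(X) \cap \mc{F}$ supported away from $K$, whose existence uses local compactness, separability, and \eqref{AS_density} again. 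One must also verify that the lattice operations used indeed land in $\mc{F}$ with the expected bounds — this is exactly the Markovian/Dirichlet property of $(\mc{E}, \mc{F})$ — and that the resulting sequence is $\mc{E}_1$-bounded so that a weak limit exists and equals $u$. I expect the argument to conclude by noting that any weak cluster point of $(u_j)$ is $u$ (since $u_j \to u$ strongly in $L^2$ and $\mc{E}_1(u_j)$ is bounded, testing against $C_c \cap \mc{F}$ identifies the limit), giving $u$ in the weak closure of $Z_K$.
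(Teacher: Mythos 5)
Your plan is in the same spirit as the paper's proof (capacitary potentials near $K$, cut off $u$, then use the density of $C_c(X)\cap\mc{F}$ from \eqref{AS_density}), but two of the points you flag as subtleties are handled differently, and in one place your intended argument would not close.

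First, the passage from a quasi-everywhere condition on $K$ to disjointness of the support from $K$ is \emph{not} an obstacle if you set the construction up as the paper does, and the extra topological cut-off in your step (b) is unnecessary. Since $\capa{}{K}=0$, for each $n$ one picks an open $U_n\supset K$ with $\capa{}{U_n}$ small, and then a potential $u_n\in\mc F$ with $u_n=1$ \emph{$m$-a.e.\ on $U_n$} (the open-set formulation of capacity) and $\mc E_1(u_n)\to 0$; by Markovianity one may further take $0\le u_n\le 1$ a.e. The paper's cut-off is simply $(1-u_n)u$. Because $u_n=1$ a.e.\ on the open set $U_n$, the product $(1-u_n)u$ vanishes $m$-a.e.\ on $U_n$, so its (essential) support is automatically contained in $X\setminus U_n$, hence disjoint from $K$. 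There is no gap between ``q.e.'' and ``support'' to bridge, and no second cut-off via $C_c(X)\cap\mc{F}$ functions is needed. Your more elaborate truncation $u_j:=u-\bigl((-Mw_j)\vee(u\wedge(Mw_j))\bigr)$ would also land in $Z_K$ for the same reason, but it is heavier to estimate than the bilinear product $(1-u_n)u$, for which one has the clean Leibniz bound $\mc E(u_nu)\le 2\bigl(\|u_n\|_{L^\infty}^2\mc E(u)+\|u\|_{L^\infty}^2\mc E(u_n)\bigr)$.

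Second, your hope that the approximants converge \emph{strongly} in $\mc E_1$ because ``the truncation is $\mc{E}_1$-contractive'' does not go through: the lattice and Leibniz estimates only give that $\mc E(u-u_j)$ (respectively $\mc E(u_nu)$) stays \emph{bounded}, not that it vanishes, because the term proportional to $\mc E(u)$ survives. The correct conclusion, and the one the paper draws, is purely weak: $\{u_nu\}_n$ is bounded in $\mc F$ (using Proposition \ref{prop_poin} for the $L^2$ part), a subsequence converges weakly in $\mc F$ to some $w$, Proposition \ref{prop_compact_embedding} upgrades this to strong $L^2$ convergence, while $\|u_nu\|_{L^2}\le \|u\|_{L^\infty}\|u_n\|_{L^2}\to 0$ forces $w=0$; hence $(1-u_n)u\rightharpoonup u$ weakly in $\mc F$. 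Your fallback sentence (``or at least converging weakly, which is what we ultimately need'') is exactly the right move, so the proposal is salvageable; but the strong-convergence claim, if taken at face value, is a genuine gap, and the extra cut-off step (b) is a detour the actual argument avoids.
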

\begin{proof}
Fix $u\in C_c(X)\cap \mc{F}$ and consider a sequence $\{u_n\}_{n\in \mathbb{N}}\subset \mc{F}$ so that
\begin{align}
\mc{E}(u_n)\xrightarrow{n\to +\infty} 0 \quad \quad \textnormal{and} \quad \quad u_n=1\ \textnormal{a.e. in}\ U_n,
\end{align}
where $U_n$ is a  neighbourhood of $K$. In particular we may suppose that $u_n \le 1$ a.e. in $X$ in view of the Markovianity of $\mc{E}$, see \cite[Subsection 1.1]{FM_Dirichlet_forms_book}. Then $(1-u_n)u \in Z_K$ and 
\begin{equation}
\mc{E}(u-(1-u_n)u)=\mc{E}(u_n u) \le 2 \left(\norm{u_n}_{L^\infty(X,m)}\mc{E}(u)+\norm{u}_{L^\infty(X,m)}\mc{E}(u_n)\right) 
\quad \text{ for any } n \in \mathbb{N}.
\end{equation}
Hence $\{uu_n\}_{n \in \mb{N}}$ is bounded in $\mc{F}$ in view of Proposition \ref{prop_poin}. In particular, up to a subsequence, there exists $w \in \mc{F}$  such that $uu_n \rightharpoonup w$ weakly in $\mc{F}$ as $n \to \infty$. By Proposition \ref{prop_compact_embedding} it follows that $uu_n \to w$ 
strongly in $L^2(X,m)$. Furthermore by Proposition \ref{prop_poin}
\begin{equation}
\int_{X}|uu_n|^2 \, dm\le \norm{u}_{L^\infty(X,m)}^2 \norm{u_n}_{L^2(X,m)}^2 \le \la_0^{-1}\norm{u}_{L^\infty(X,m)}^2 \mc{E}(u_n) \to 0^+, \text{ as } n \to \infty.
\end{equation}
We conclude that $w=0$ and so $(1-u_n)u \rightharpoonup u$ weakly in $\mc{F}$. Then the claim follows from \eqref{AS_density}.  
\end{proof}

In the spirit of \cite[Proposition 3.8] {FNO_disa_Dirichlet_region}, we have the following lemma which, together with Proposition \ref{prop_poin}, proves that  \eqref{hp_limit_E_e_Ve_o} holds. We recall that $ V_\e^f $ is the classical capacitary potential associated to the capacity $\capa{f}{K_\e}$.
\begin{lemma}
Under the assumptions above on $K_\e$ and $K_0$, for any $f \in \mathcal{F}$
\begin{align}
    &\capa{f}{K_\e} \to 0^+, \quad \text{ as } \e \to 0^+,\label{limit_capa}\\
    & V_\e^f \to 0 \text{ strongly in } \mc{F}, \quad \text{ as } \e \to 0^+,\label{limit_potential}\\
    & m(K_\e) \to 0^+, \quad \text{ as } \e \to 0^+. \label{limit_meausure}
\end{align}
\end{lemma}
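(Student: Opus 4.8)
The plan is to establish the three limits in the order in which they are stated; the substantive point is \eqref{limit_capa}, from which \eqref{limit_potential} is immediate and \eqref{limit_meausure} is essentially a measure-theoretic consequence of $\capa{}{K_0}=0$. For \eqref{limit_capa} I would first reduce to the case of bounded $f$. The key elementary observation is that if $u\in\mc{F}$ satisfies $\widetilde u\ge\widetilde f$ q.e.\ on a set $K$ and $h\in\mc{F}$, then $\widetilde{u+|h-f|}=\widetilde u+|\widetilde h-\widetilde f|\ge\widetilde f+(\widetilde h-\widetilde f)=\widetilde h$ q.e.\ on $K$, so $u+|h-f|$ is admissible for $\capa{h}{K}$; combining the triangle inequality for $\mc{E}(\cdot)^{1/2}$ with the Markovian bound $\mc{E}(|h-f|)\le\mc{E}(h-f)$ and taking the infimum over admissible $u$ gives $|\capa{f}{K}^{1/2}-\capa{h}{K}^{1/2}|\le\mc{E}(f-h)^{1/2}$. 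By \eqref{AS_density} we may pick $h\in C_c(X)\cap\mc{F}$ (hence bounded) with $\mc{E}(f-h)$ arbitrarily small, so it suffices to prove $\capa{g}{K_\e}\to0^+$ for bounded $g\in\mc{F}$.

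For such a $g$, set $M:=\norm{g}_{L^\infty(X,m)}$. Since $\capa{}{K_0}=0$, the definition of capacity together with outer regularity furnishes $w_n\in\mc{F}$ with $0\le w_n\le1$ (using Markovianity), with $w_n=1$ a.e.\ on an open neighbourhood $U_n$ of $K_0$, and with $\mc{E}(w_n)\to0^+$. Since $w_n=1$ a.e.\ on the open set $U_n$, its quasi-continuous representative equals $1$ q.e.\ on $U_n$, so $\widetilde{Mw_n}=M\ge\widetilde g$ q.e.\ on $U_n$ (using $\widetilde g\le M$ q.e.); hence $Mw_n$ is admissible for $\capa{g}{K}$ whenever $K\subseteq U_n$. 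By the concentration assumption $(\mathrm{ii})$, for each $n$ there is $\e_n>0$ with $K_\e\subseteq U_n$ for all $\e<\e_n$, and then $\capa{g}{K_\e}\le\mc{E}(Mw_n)=M^2\mc{E}(w_n)$. Letting $\e\to0^+$ and then $n\to\infty$ yields $\capa{g}{K_\e}\to0^+$, proving \eqref{limit_capa}.

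The limit \eqref{limit_potential} then follows immediately: $V_\e^f$ realizes the minimum in \eqref{prob_min_cap} for $K=K_\e$, so $\mc{E}(V_\e^f)=\capa{f}{K_\e}\to0^+$, and since $V_\e^f\in\mc{F}$, Proposition \ref{prop_poin} gives $\norm{V_\e^f}_{L^2(X,m)}^2\le C\,\mc{E}(V_\e^f)$ for some $\e$-independent $C>0$, whence $\mc{E}_1(V_\e^f)=\mc{E}(V_\e^f)+\norm{V_\e^f}_{L^2(X,m)}^2\le(1+C)\,\mc{E}(V_\e^f)\to0^+$, i.e.\ $V_\e^f\to0$ strongly in $\mc{F}$. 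For \eqref{limit_meausure} I would use that $\capa{}{K_0}=0$ forces $m(K_0)=0$, since sets of zero capacity are $m$-negligible (see \cite{FM_Dirichlet_forms_book}); as $m$ is a Radon measure and $K_0$ is compact, outer regularity provides, for every $\delta>0$, an open $U\supseteq K_0$ with $m(U)<\delta$, and concentration gives $K_\e\subseteq U$ for $\e$ small enough, so $m(K_\e)\le m(U)<\delta$ for such $\e$; since $\delta>0$ is arbitrary, $m(K_\e)\to0^+$.

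I expect the main obstacle to be the competitor construction underlying \eqref{limit_capa}: one needs admissible functions whose energy tends to $0$, which is precisely why the reduction to bounded $g$ is needed, so that the constant $M$ can absorb the trace constraint $\widetilde u\ge\widetilde g$ on $K_\e$. The surrounding steps — producing the cut-offs $w_n$ from $\capa{}{K_0}=0$, and the passages between a.e.\ and q.e.\ statements for quasi-continuous representatives on open sets — are standard facts of potential theory for Dirichlet forms (cf.\ \cite{FM_Dirichlet_forms_book}) and should be routine.
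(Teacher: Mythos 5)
Your proof is correct, but it runs on a genuinely different engine than the paper's. For \eqref{limit_capa} the paper argues by weak compactness: since $\mathcal{E}(V_\e^f)\le\mathcal{E}(f)$ uniformly, along a subsequence $V_{\e_n}^f\rightharpoonup V$ in $\mathcal{F}$, and testing the Euler--Lagrange equation of $V_{\e_n}^f$ against functions $\psi$ supported away from $K_0$ gives $\mathcal{E}(V,\psi)=0$ for all $\psi\in Z_{K_0}$; the paper then invokes Lemma \ref{time_consuming_lemma} (the density of $Z_{K_0}$ in $\mathcal{F}$ when $\capa{}{K_0}=0$) to conclude $V=0$, whence $\capa{f}{K_{\e_n}}=\mathcal{E}(V_{\e_n}^f,f)\to\mathcal{E}(V,f)=0$ and norm convergence upgrades the weak limit to a strong one. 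Your route replaces this limiting argument with a direct competitor construction (cut-offs $w_n$ with $\widetilde{w}_n=1$ q.e.\ on open $U_n\supset K_0$ and $\mathcal{E}(w_n)\to0$), which forces a preliminary reduction to bounded $f$ via the Lipschitz estimate $|\capa{f}{K}^{1/2}-\capa{h}{K}^{1/2}|\le\mathcal{E}(f-h)^{1/2}$ — a nice and correct uniform bound that the paper does not use. The upshot is that you prove \eqref{limit_capa} without Lemma \ref{time_consuming_lemma} at all, and you get \eqref{limit_potential} immediately from $\mathcal{E}(V_\e^f)=\capa{f}{K_\e}\to0$ plus Proposition \ref{prop_poin}, which is cleaner than the weak-plus-norm-convergence route. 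The cost is that your argument leans on two standard but worth-citing facts of potential theory that the paper's proof bypasses: that $|\cdot|$ is a normal contraction (so $\mathcal{E}(|h-f|)\le\mathcal{E}(h-f)$), and that a quasi-continuous function equal to $1$ m-a.e.\ on an open set is equal to $1$ q.e.\ there. Note also that the paper proves and retains Lemma \ref{time_consuming_lemma} because it is reused later (e.g.\ via Proposition \ref{prop_norm_L2_o}), so the authors had extra motivation to structure the proof around it; your version would leave that lemma orphaned in this particular proof, though it would still be needed elsewhere. Your treatment of \eqref{limit_meausure} is essentially the paper's (outer regularity plus the measure--capacity comparison), just phrased through $m(K_0)=0$ rather than the explicit bound $m(U)\le(1+\lambda_1^{-1})\capa{}{U}$.
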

\begin{proof}
For any $\e \in (0,1]$ the potential $V_\e^f$ solves the equation 
\begin{equation}\label{proof:lemma_capa_1}
\mc{E}(V_\e^f,w)=0  \quad \text{ for any } w \in Z_\e.
\end{equation}
Testing the equation above with $V_\e^f-f$ we conclude that, by the Cauchy-Schwarz inequality,
\begin{equation}
\mc{E}(V_\e^f)\le \mc{E}(f)  \quad \text{ for any } \e \in(0,1].
\end{equation}
Hence there exist a sequence $\{V_{\e_n}^f\}_{n \in \mb{N}}$ and $V \in \mc{F}$ such that $V_{\e_n}^f \rightharpoonup V$ weakly in $\mc{F}$.  Let $\psi \in Z_{K_0}$ and $U$ a neighbourhood of $K_0$ such that $\mathop{\rm{supp}}{\psi} \cap U=\emptyset$. Let $\e_0$ be such that $K_\e \subset U$ for any $\e \in [0,\e_0]$. Testing \eqref{proof:lemma_capa_1} with $\psi$ and passing to the limit as $n \to \infty$ we conclude that $\mc{E}(V,\psi)=0$ and so
\begin{equation}
\mc{E}(V,w)=0  \quad \text{ for any } w \in Z_{K_0}.
\end{equation}
Hence by Lemma \ref{time_consuming_lemma} it follows that $V=V_{K_0}^f=0$. Moreover,
\begin{align}
0=\capa{f}{K_0}=\mc{E}(V)=\mc{E}(V,f)=\lim_{n \to \infty }\mc{E}(V_{\e_n},f)=\lim_{n \to \infty }\capa{f}{K_{\e_n}}.
\end{align}
By the Urysohn Subsequence Principle we conclude that \eqref{limit_capa} and \eqref{limit_potential} hold.

We recall that for any set $K \subset X$
\begin{equation}
\capa{}{K}=\inf\{\capa{}{U}\ :\ U\subseteq X \text{ open},\ K \subset U\}.
\end{equation}
Fix $\delta>0$ small enough and $U\subseteq X$ open so that $K_0 \subset U$ and $\capa{}{U}\leq\delta$. For $\e_0\in(0,1]$ small enough, $K_\e\subset U$ for every $\e\in (0,\e_0]$ and so, thanks to \cite[Subsection 2.1]{FM_Dirichlet_forms_book} and Proposition \ref{prop_poin},
\begin{align}
    m(K_\e)\leq m(U)\leq (1+\la_1^{-1})\capa{}{U}\leq (1+\la_1^{-1})\delta.
\end{align}
Passing to the limit as $\e \to 0^+$
we conclude that 
\begin{equation}
\limsup_{\e \to 0^+}m(K_\e)\leq(1+\la_1^{-1})\delta,
\end{equation}
for any $\delta>0$ and so  \eqref{limit_meausure} holds.
\end{proof}

\begin{proposition}
For any $n \in \mathbb{N}$
\begin{equation}\label{limit_eigen_dirichlet}
\lim_{\e \to 0^+}\la_{\e,n}=\la_{n}.
\end{equation}
\end{proposition}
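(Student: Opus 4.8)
The plan is to derive \eqref{limit_eigen_dirichlet} from the Courant--Fischer min-max characterization of the eigenvalues, using only the strong convergence of the capacitary potentials, \eqref{limit_potential}, which has just been established. Note first that since $\capa{}{K_0}=0$ the condition ``$\widetilde u=0$ q.e.\ on $K_0$'' is vacuous, so $Z_0=\mc{F}$ and $\mathcal{Z}_0=L^2(X,m)$; thus the min-max over $\mc{F}$ returns precisely the eigenvalues $\{\la_n\}_{n}$ of $H$.

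First I would prove the lower bound $\liminf_{\e\to0^+}\la_{\e,n}\ge\la_n$. For every $\e\in(0,1]$ we have $Z_\e\subseteq\mc{F}$, and on $Z_\e$ the form $\mc{E}$ and the $L^2(X,m)$-norm coincide with the corresponding objects on $\mc{F}$. Hence by Courant--Fischer,
\[
\la_{\e,n}=\min_{\substack{B\subseteq Z_\e\\ \dim B=n}}\ \max_{u\in B\setminus\{0\}}\frac{\mc{E}(u)}{\norm{u}^2_{L^2(X,m)}}
\ \ge\ \min_{\substack{B\subseteq \mc{F}\\ \dim B=n}}\ \max_{u\in B\setminus\{0\}}\frac{\mc{E}(u)}{\norm{u}^2_{L^2(X,m)}}=\la_n ,
\]
which gives the claim immediately.

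For the upper bound I would build a recovery family. Let $\phi_1,\dots,\phi_n$ be the first $n$ eigenfunctions of $H$, set $\phi_j^\e:=\phi_j-V_\e^{\phi_j}$ with $V_\e^{\phi_j}$ the $\phi_j$-capacitary potential of $K_\e$, so that $\phi_j^\e\in Z_\e$ by construction and $\phi_j^\e\to\phi_j$ strongly in $\mc{F}$ (hence in $L^2(X,m)$) as $\e\to0^+$ by \eqref{limit_potential}. Then the Gram matrix $[(\phi_i^\e,\phi_j^\e)_{L^2(X,m)}]_{i,j}$ tends to the identity and $[\mc{E}(\phi_i^\e,\phi_j^\e)]_{i,j}$ tends to $\mathrm{diag}(\la_1,\dots,\la_n)$, using $\mc{E}(\phi_i,\phi_j)=\la_i\delta_{ij}$ and the joint continuity of $\mc{E}$ on $(\mc{F},\mc{E}_1)$. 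In particular, for $\e$ small the $\phi_j^\e$ are linearly independent and span an $n$-dimensional $A_\e\subseteq Z_\e$; writing $u=\sum_j c_j\phi_j^\e$ and $|c|^2:=\sum_j c_j^2$,
\[
\frac{\mc{E}(u)}{\norm{u}^2_{L^2(X,m)}}=\frac{\sum_i\la_i c_i^2+o(1)\,|c|^2}{|c|^2+o(1)\,|c|^2}\ \le\ \la_n+o(1)\qquad\text{as }\e\to0^+,
\]
uniformly in $c\neq0$. Taking $B=A_\e$ in the min-max formula yields $\la_{\e,n}\le\la_n+o(1)$, i.e.\ $\limsup_{\e\to0^+}\la_{\e,n}\le\la_n$, and combining with the lower bound proves \eqref{limit_eigen_dirichlet}.

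The only point needing genuine care is the upper bound: one must check that the perturbed eigenfunctions $\phi_j^\e$ remain a linearly independent family inside $Z_\e$ and that the Rayleigh-quotient estimate is uniform over the finite-dimensional subspace $A_\e$ --- both follow cleanly from the $\mc{F}$-convergence \eqref{limit_potential}. As an alternative, one could instead verify all the hypotheses of Theorem \ref{theo_spectral_stability} with $P_\e u:=u\,\mathbf 1_{X\setminus K_\e}$ and $E_\e$ the inclusion $Z_\e\hookrightarrow L^2(X,m)$, with the equiboundedness \eqref{ineq_equi_bounded} coming from Proposition \ref{prop_poin} and the uniform bound $\la_{\e,1}\ge\la_1$, and \eqref{hp_convergence_spectral_stab_bilinear} coming from Lemma \ref{time_consuming_lemma}; but the min-max argument above is shorter and more self-contained.
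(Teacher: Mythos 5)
Your proof is correct and proceeds by a genuinely different route from the paper's. The paper verifies the hypotheses of Theorem~\ref{theo_spectral_stability}, taking $P_\e$ to be multiplication by the indicator of $X\setminus K_\e$, obtaining the equibound \eqref{ineq_equi_bounded} from Proposition~\ref{prop_poin} via $\mc{E}(R_\e P_\e u)\le\la_1^{-1}\norm{u}_{L^2(X,m)}^2$, and then deducing eigenvalue convergence from the operator-norm convergence of the resolvents $\widetilde{R}_\e\to R_0$. You instead argue directly by Courant--Fischer: the lower bound $\la_{\e,n}\ge\la_n$ is an immediate monotonicity of the min-max under $Z_\e\subseteq Z_0=\mc{F}$ (and the observation $Z_0=\mc{F}$ because $\capa{}{K_0}=0$ is exactly the right one), while the upper bound uses the strong $\mc{F}$-convergence $V_\e^{\phi_j}\to 0$ from \eqref{limit_potential} to make $\phi_j-V_\e^{\phi_j}$, $j=1,\dots,n$, an asymptotically orthonormal recovery family inside $Z_\e$ with Rayleigh quotients uniformly $\le\la_n+o(1)$. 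Both arguments are sound; the paper's fits into the unified resolvent-convergence framework of Theorem~\ref{theo_spectral_stability} that is reused verbatim in the pseudo-differential application, whereas your min-max argument is more elementary and self-contained, reuses \eqref{limit_potential} directly rather than re-deriving compactness, and delivers the stronger one-sided monotonicity $\la_{\e,n}\ge\la_n$ for every $\e$ rather than only in the limit. You also correctly identify the paper's route as a viable alternative at the end of your proposal.
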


\begin{proof}
We are going to prove that the assumptions of Theorem \ref{theo_spectral_stability} are satisfied.
Let us define the linear operator $P_\e: L^2(X,m) \to  \mc{Z}_\e$   as 
\begin{equation}
 P_\e(u)(x):=
 \begin{cases}
     u(x), & \text{ if } x \in X \setminus K_\e,\\
     0, & \text{ if } x \in  K_\e.
 \end{cases}
\end{equation}
Then for any $u \in L^2(X,m)$, thanks to Proposition \ref{prop_poin},
\begin{align}
    \mc{E}(R_\e P_\e u)&=(P_\e u, R_\e P_\e u)_{L^2(X,m)}\\
    &\leq \norm{u}_{L^2(X,m)}\norm{R_\e P_\e u}_{L^2(X,m)}\\
    &\leq \la_1^{-\frac{1}{2}}  \norm{u}_{L^2(X,m)}\mc{E}(R_\e P_\e u)^{\frac{1}{2}}.
\end{align}
Furthermore if $R_{\e_n} P_{\e_n} u \rightharpoonup w$  weakly in $\mc{F}$ for some $w \in \mc{F}$ as $n\to \infty$, then clearly
\begin{equation}
\lim_{n \to \infty} \mc{E}(R_{\e_n} P_{\e_n} u,v)= \mc{E}(w,v) \quad \forall v \in Z_0.
\end{equation}
We conclude that \eqref{limit_eigen_dirichlet} holds in view of Theorem \ref{theo_spectral_stability}
\end{proof}

We have proved that all the hypotheses of Theorem \ref{theo_exp_1_order} are satisfied. Furthermore by Lemma \ref{time_consuming_lemma} and Proposition \ref{prop_norm_L2_o} 
\begin{equation}
\norm{V_\e}^2_{L^2(X,m)}=o(\mc{E}(V_\e))=o(\capa{\phi}{K_\e}), \quad \text{ as }\e \to 0^+,
\end{equation}
and by \eqref{eq_Ve_Dir_form} tested with $V_\e-\phi$ and \eqref{eq_phi0_Dir_form} tested with $V_\e$
\begin{equation}
\capa{\phi}{K_\e}=\mc{E}(V_\e,V_\e)=\la(V_\e,\phi).
\end{equation}
In conclusion, by Theorem \ref{theo_exp_1_order}, we have shown that 
\begin{equation}
\la_\e-\la=\capa{\phi}{K_\e}+o(\capa{\phi}{K_\e}), \quad \text{ as } \e \to 0^+,
\end{equation}
where we are denoting with $\la_{\e}$ the simple eigenvalue $\la_{\e, n}$ for any $ \e \in (0,1]$.

\subsection{Pseudo-differential operators}\label{subsec_pseudo}
Let $\Omega \subset \mathbb{R}^N$ be a bounded Lipschitz domain. Consider a family $\{H_\e\}_\e$ of pseudo-differential operators with domains
\begin{align}
&L^2_0(\Omega):=\{u \in L^2(\mathbb{R}^N): u=0\ \inn \R^N\setminus \Omega\}\\
&\textnormal{Dom}(H_\e):=\{u \in L^2_0(\Omega)\ :\ f_\e \widehat{u} \in L^2(\R^N)\},
\end{align}
where $\widehat{u}$ is the Fourier transform of $u$ and $f_\e$ denotes the symbol of $H_\e$. We will also use $\textgoth{F}$ to denote the Fourier transform. 
On the symbol $f_\e$ we assume that 
\begin{equation}\label{hp_fe_intrìegrability}
f_\e \in L^2_{loc}(\R^N) \quad \text{ and } \quad f_\e (\xi) =O(|\xi|^m), \text{ for some }  m \in \mathbb{N} \text{ as } |\xi| \to \infty.  
\end{equation}
Let $\mathcal{E}^{(\e)}$ be the bilinear form associated to $H_\e$
\begin{align}
    \mc{E}^{(\e)}(u,v):=\int_{\R^N} f_\e (\xi) \widehat{u}(\xi) \overline{\widehat{v}(\xi)} \, \textnormal{d}\xi
\end{align}
with domain
\begin{align}
    \mc{F}_\e:=\{u \in L^2_0(\Omega)\ :\ f_\e^{1/2} \widehat{u} \in L^2(\R^N)\}.
\end{align}
In what follows we assume that for every $\e\in[0,1]$ the symbol $f_\e$ of the operator $H_\e$ satisfies the following properties:
\begin{enumerate}
    \item $f_\e>1$ almost everywhere in $\R^N$;
    \item there exists a positive constants $C_1,C_0>0$ such that 
    \begin{equation}\label{hp_estimates_fe}
        C_1  f_1(\xi) \le    f_\e(\xi)\leq C_0 f_0(\xi) \quad \text{ almost everywhere in }  \R^N.
    \end{equation}
\end{enumerate}
Moreover, we require that
\begin{align}\label{fe_to_f0_pseudo}
    f_\e \xrightarrow[]{\e \to 0} f_0 \quad \textnormal{almost everywhere in}\ \R^N.
\end{align}
The first assumption is not restrictive, as observed after assumption \eqref{hp_la_1>0}, and will assure  that the bottom of the spectrum is bigger or equal than $1$. In particular the resolvent operator $R_\e=H_\e^{-1}$ is well defined.  We also assume that $R_\e$ is compact. A simple criterion to assure the validity of this assumption is given in the next proposition.

\begin{proposition}\label{prop_comp_embedding_pseudo}
Suppose that 
\begin{equation}\label{hp_fe_compc}
\int_{\R^N} e^{-t f_\e^{\frac{1}{2}}} \, \textnormal{d} \xi  <+\infty \quad \text{ for any } t>0.
\end{equation}
Then the embedding $\mc{F}_\e \hookrightarrow L_0^2(\Omega)$ is compact. In particular, $R_\e$ is a compact operator.
\end{proposition}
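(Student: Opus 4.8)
The plan is in two stages: first reduce compactness of $R_\e$ to compactness of the embedding $\mc{F}_\e\hookrightarrow L^2_0(\Omega)$, and then establish the latter by a Fourier-analytic (generalized Rellich) argument in which hypothesis \eqref{hp_fe_compc} enters only through the finiteness of the sublevel sets of $f_\e$. For the reduction I would note that, since $f_\e>1$ almost everywhere, the form satisfies $\mc{E}^{(\e)}(u,u)=\int_{\R^N}f_\e|\widehat u|^2\,d\xi\ge\norm{u}_{L^2}^2$, so $\min\sigma(\overline{H_\e})\ge 1$, $R_\e=H_\e^{-1}$ is a well-defined bounded operator with $\norm{R_\e u}_{L^2}\le\norm{u}_{L^2}$, and testing the equation $\mc{E}^{(\e)}(R_\e u,\cdot)=(u,\cdot)_{L^2}$ against $R_\e u$ gives $\mc{E}^{(\e)}(R_\e u,R_\e u)\le\norm{u}_{L^2}\norm{R_\e u}_{L^2}\le\norm{u}_{L^2}^2$. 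Hence $R_\e\colon L^2_0(\Omega)\to\mc{F}_\e$ is bounded, and composing it with the (still to be proven) compact embedding $\mc{F}_\e\hookrightarrow L^2_0(\Omega)$ shows that $R_\e$ is compact on $L^2_0(\Omega)$.

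For the embedding, I would start from the observation that \eqref{hp_fe_compc} forces every sublevel set $E_M:=\{\xi\in\R^N:\ f_\e(\xi)\le M\}$ to have finite Lebesgue measure: on $E_M$ one has $e^{-tf_\e^{1/2}}\ge e^{-tM^{1/2}}$, whence $|E_M|\le e^{tM^{1/2}}\int_{\R^N}e^{-tf_\e^{1/2}}\,d\xi<\infty$. Then I would take a bounded sequence $\{u_n\}\subset\mc{F}_\e$, extract a subsequence with $u_n\rightharpoonup u$ weakly in $L^2_0(\Omega)$ (equivalently $\widehat{u_n}\rightharpoonup\widehat u$ weakly in $L^2(\R^N)$), and record that $\int f_\e|\widehat u|^2\le\liminf_n\int f_\e|\widehat{u_n}|^2\le C$ by weak lower semicontinuity. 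Since $\Omega$ is bounded, the functions $\widehat{u_n}(\xi)=\int_\Omega u_n(x)e^{-ix\cdot\xi}\,dx$ are uniformly bounded (by $|\Omega|^{1/2}\sup_n\norm{u_n}_{L^2}$) and uniformly Lipschitz on $\R^N$ (with constant controlled by $\operatorname{diam}(\Omega)\,|\Omega|^{1/2}\sup_n\norm{u_n}_{L^2}$), so by Arzel\`a--Ascoli together with a diagonal exhaustion of $\R^N$ by balls I may further assume $\widehat{u_n}\to\widehat u$ locally uniformly on $\R^N$.

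To conclude that $u_n\to u$ strongly in $L^2$, I would split $\int_{\R^N}|\widehat{u_n}-\widehat u|^2\,d\xi$ into three pieces. On $\{f_\e>M\}$ it is at most $M^{-1}\int f_\e|\widehat{u_n}-\widehat u|^2\le 4CM^{-1}$, small for $M$ large. On $E_M\setminus B_\rho$ it is at most a fixed constant times $|E_M\setminus B_\rho|$, small for $\rho$ large since $|E_M|<\infty$. On the bounded set $E_M\cap B_\rho$ it tends to $0$ as $n\to\infty$ by the locally uniform convergence of $\widehat{u_n}$, again using $|E_M|<\infty$. Letting $n\to\infty$ and then $M,\rho\to\infty$ yields $\widehat{u_n}\to\widehat u$ in $L^2(\R^N)$, i.e.\ $u_n\to u$ in $L^2_0(\Omega)$, which is exactly the asserted compactness. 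The one point that needs care is this three-way splitting: it is precisely the finiteness of the measures $|E_M|$ supplied by \eqref{hp_fe_compc}---and not any pointwise growth of $f_\e$, which is not assumed---that compensates for the possibly unbounded geometry of the sublevel sets and makes the tail estimates close; the reduction step and the Paley--Wiener type equicontinuity are routine.
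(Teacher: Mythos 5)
Your proof is correct, and it takes a genuinely different route from the paper. The paper's argument is semigroup-flavored: it introduces the Fourier multiplier $e^{-t\sqrt{H_\e}}$, shows via Plancherel that $\|u-e^{-t\sqrt{H_\e}}u\|_{L^2}\le t\,\mc{E}^{(\e)}(u)^{1/2}$ (so these operators approximate the identity uniformly on $\mc{F}_\e$-bounded sets as $t\to 0$), and then uses \eqref{hp_fe_compc} to write $e^{-t\sqrt{H_\e}}$ as convolution against the $L^2$ kernel $\textgoth{F}^{-1}(e^{-tf_\e^{1/2}})$; Young's inequality makes this a bounded map $L^2\to L^\infty$, weak $L^2$-convergence gives pointwise convergence of the convolutions, and dominated convergence over the bounded set $\Omega$ closes the argument. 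Your approach instead never leaves Fourier space: boundedness of $\Omega$ gives a Paley--Wiener-type uniform Lipschitz bound on $\widehat{u_n}$, Arzel\`a--Ascoli upgrades weak convergence to locally uniform convergence, and you close by the three-way split $\{f_\e>M\}$ (controlled by the energy bound), $E_M\setminus B_\rho$ (controlled by $|E_M|<\infty$), and $E_M\cap B_\rho$ (controlled by locally uniform convergence). Both arguments use boundedness of $\Omega$ in an essential way (the paper in the DCT step, you in the equicontinuity step). One genuine difference is where the hypothesis enters: the paper uses the full strength of \eqref{hp_fe_compc} to get $e^{-tf_\e^{1/2}}\in L^2(\R^N)$, whereas your argument only extracts $|E_M|=|\{f_\e\le M\}|<\infty$ for every $M$, which is a strictly weaker consequence (e.g.\ $|E_M|=e^M$ satisfies your condition but violates \eqref{hp_fe_compc}). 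So your proof in fact establishes the compactness of the embedding under a slightly weaker hypothesis than the one stated, which is a point worth noting.

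One small presentational remark: when you invoke Arzel\`a--Ascoli, you should observe explicitly that the locally uniform limit agrees a.e.\ with $\widehat u$ (test against compactly supported $L^2$ functions and use uniqueness of weak $L^2$ limits), so that the continuous representative appearing in the three-way split really is $\widehat u$. The reduction step (from compact embedding to compactness of $R_\e$) and the sublevel-set measure bound are both fine.
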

\begin{proof}
Let $ u \in \mathcal{F}_\e$ and $t>0$. We define 
\begin{equation}
e^{-t \sqrt{H}_\e} u:= \textgoth{F}^{-1}\left(  e^{-tf_\e^{\frac{1}{2}}(\xi)} \widehat{u}(\xi)\right).
\end{equation}
We divide the proof in two steps.

\textbf{Step 1.} We claim that for any $t>0$ and $u \in \mc{F}_\e$
\begin{equation}
\norm{u-e^{-t \sqrt{H}_\e} u}_{L^2(\R^N)} \le t (\mc{E}^{(\e)}(u))^{\frac{1}{2}}.
\end{equation}
Indeed by the Plancherel identity 
\begin{align}
\int_{\R^N} |u-e^{-t \sqrt{H}_\e} u|^2 \, \textnormal{d}x &=\int_{\R^N} |1-e^{-t (f_\e(\xi))^{\frac{1}{2}}} |^2 |\widehat{u}(\xi)|^2 \, \textnormal{d}\xi\\
&\le t^2 \int_{\R^N} |f_\e(\xi)| |\widehat{u}(\xi)|^2 \, \textnormal{d}\xi.
\end{align}

\textbf{Step 2.}  Let $\{u_n\}_{n \in \mb{N}}$ and $u$  be such that $u_n \rightharpoonup u$ weakly in $\mathcal{F}_\e$ as $n \to \infty$. We claim that $u_n \to u$ strongly $L^2_0(\Omega)$ as $n \to \infty$. To this end we notice that 
\begin{equation}
\norm{u-u_n}_{L^2(\Omega)}  \le \norm{u-e^{-t \sqrt{H}_\e} u}_{L^2(\Omega)}+\norm{u_n-e^{-t \sqrt{H}_\e} u_n}_{L^2(\Omega)}+
\norm{e^{-t \sqrt{H}_\e} (u-u_n)}_{L^2(\Omega)}
\end{equation}
and so by Step 1 and the fact that $\{u_n\}$ is bounded in $\mc{F}_\e$, it is enough to prove that 
\begin{equation}\label{proof:compct_pseudo_0}
 \lim_{n \to \infty}\norm{e^{-t \sqrt{H}_\e} (u-u_n)}_{L^2(\Omega)} =0.
\end{equation}
We notice that for any $v \in \mathcal{F}_\e$  
\begin{equation}\label{proof:compct_pseudo}
e^{-t \sqrt{H}_\e} v= \textgoth{F}^{-1}\left(  e^{-tf_\e^{\frac{1}{2}}(\xi)}\right)* v
\end{equation}
and so  by the Young inequality and \eqref{hp_fe_compc}
\begin{equation}
\norm{e^{-t \sqrt{H}_\e} v}_{L^\infty(\R^N)} \le \norm{v}_{L^2(\R^N)} \norm{e^{-t f_\e^{\frac{1}{2}}}}_{L^2(\R^N)}.
\end{equation}
 Furthermore, since $u_n \rightharpoonup u$ weakly in $L^2_0(\Omega)$ as $n \to \infty$,  by \eqref{proof:compct_pseudo}, it follows that   
\begin{equation}
  e^{-t \sqrt{H}_\e}u_n \to e^{-t \sqrt{H}_\e}u \quad \text{ a.e. in }\R^N.  
\end{equation}
Then by Dominated Converge Theorem  we conclude that \eqref{proof:compct_pseudo_0} holds.
Since $R_\e: L^2_0(\Omega) \to \mc{F}_\e$ is continuous then is clear that $R_\e: L^2_0(\Omega) \to  L^2_0(\Omega) $ is compact.
\end{proof}

As usual we denote with $\{\la_{\e,n}\}_{n \in \mb{N}}$ the spectrum of $H_\e$ for $\e \in [0,1]$.
Similarly we denote with $\{\phi_{0,n}\}_{n \in \mb{N}}$ an orthonormal basis of eigenfunctions of $H_0$ of  $L^2_0(\Omega)$.
\begin{proposition}
For any $n \in \mathbb{N}$
\begin{equation}\label{lim_eigen_pseudo} 
\lim_{\e \to 0^+} \la_{\e,n}=\la_{0,n}.
\end{equation}
\end{proposition}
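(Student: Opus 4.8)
The plan is to deduce \eqref{lim_eigen_pseudo} from Theorem \ref{theo_spectral_stability}; since in this application the underlying measure space never changes, this reduces to a routine-looking but slightly lengthy verification of its hypotheses. One takes $X=Y_\e=\Omega$ with $m=m_\e$ the Lebesgue measure and $\mc{Z}_\e=\mc{Z}_0=L^2_0(\Omega)$ for every $\e$, so that condition \eqref{hp_measure_to_0} holds trivially, and chooses $P_\e:=\mathrm{Id}$ together with $E_\e$ the inclusion $\mc{F}_\e\hookrightarrow L^2_0(\Omega)$; then $(E_\e u)|_{Y_\e}=u$, $P_\e E_\e u=u$, condition \eqref{hp_limit_P} is immediate, and \eqref{hp_RP_auto} is just the self-adjointness of $R_\e$. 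As the reference form $(\mc{E},\mc{F})$ required by Theorem \ref{theo_spectral_stability} I would take $\mc{E}:=\mc{E}^{(1)}$, $\mc{F}:=\mc{F}_1$, with associated operator $H_1$: by hypothesis $R_1=H_1^{-1}$ is compact, $\la_{1,1}\ge 1>0$ because $f_1>1$ a.e., $\mc{F}_1$ contains $C^\infty_c(\Omega)$ in view of the growth bound on the symbols and is hence dense in $L^2_0(\Omega)$, and $\mc{F}_1\subseteq L^2_0(\Omega)=\mc{Z}_0$.

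Next I would establish the equiboundedness \eqref{ineq_equi_bounded}. Since $f_\e\ge C_1 f_1$ a.e.\ by \eqref{hp_estimates_fe}, the operator $\widetilde R_\e=E_\e R_\e P_\e=R_\e$ sends $L^2_0(\Omega)$ into $\mc{F}_\e\subseteq\mc{F}_1=\mc{F}$; testing the equation $\mc{E}^{(\e)}(R_\e u,v)=(u,v)_{L^2_0(\Omega)}$ with $v=R_\e u$ and using $\norm{R_\e}_{\mc{L}(L^2_0(\Omega))}=\la_{\e,1}^{-1}\le 1$ (again because $f_\e>1$ a.e.) gives
\begin{equation*}
\mc{E}^{(1)}(\widetilde R_\e u)\le C_1^{-1}\mc{E}^{(\e)}(R_\e u)=C_1^{-1}(u,R_\e u)_{L^2_0(\Omega)}\le C_1^{-1}\norm{u}^2_{L^2_0(\Omega)}.
\end{equation*}

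The one genuinely substantial point is the bilinear convergence \eqref{hp_convergence_spectral_stab_bilinear}. Assume $\widetilde R_\e u=R_\e u\rightharpoonup w$ weakly in $\mc{F}_1$ along some sequence $\e\to 0^+$. By Proposition \ref{prop_compact_embedding} the embedding $\mc{F}_1\hookrightarrow L^2_0(\Omega)$ is compact, so $R_\e u\to w$ in $L^2_0(\Omega)$ and, by Plancherel, $\widehat{R_\e u}\to\widehat w$ in $L^2(\R^N)$; passing to a further subsequence we may assume this convergence holds a.e.\ as well. Together with \eqref{fe_to_f0_pseudo} this yields $f_\e^{1/2}\widehat{R_\e u}\to f_0^{1/2}\widehat w$ a.e., while $\norm{f_\e^{1/2}\widehat{R_\e u}}^2_{L^2(\R^N)}=\mc{E}^{(\e)}(R_\e u)=(u,R_\e u)_{L^2_0(\Omega)}\le\norm{u}^2_{L^2_0(\Omega)}$ stays bounded; Fatou's lemma then gives $\int_{\R^N}f_0|\widehat w|^2\,\textnormal{d}\xi<\infty$, i.e.\ $w\in\mc{F}_0=Z_0$. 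In particular $f_\e^{1/2}\widehat{R_\e u}\rightharpoonup f_0^{1/2}\widehat w$ weakly in $L^2(\R^N)$. On the other hand, for any $v\in\mc{F}_0$ (note $\mc{F}_0\subseteq\mc{F}_1$ since $C_1 f_1\le C_0 f_0$) one has $f_\e^{1/2}\widehat v\to f_0^{1/2}\widehat v$ strongly in $L^2(\R^N)$ by dominated convergence, a dominating function being $2(C_0+1)f_0|\widehat v|^2\in L^1(\R^N)$. Combining weak and strong convergence,
\begin{equation*}
\mc{E}^{(\e)}(R_\e P_\e u,v)=\int_{\R^N}\bigl(f_\e^{1/2}\widehat{R_\e u}\bigr)\bigl(f_\e^{1/2}\,\overline{\widehat v}\bigr)\,\textnormal{d}\xi\;\longrightarrow\;\int_{\R^N}f_0\,\widehat w\,\overline{\widehat v}\,\textnormal{d}\xi=\mc{E}^{(0)}(w,v),
\end{equation*}
which is exactly \eqref{hp_convergence_spectral_stab_bilinear}; this also identifies $w=R_0 u$, since $\mc{E}^{(\e)}(R_\e u,v)=(u,v)_{L^2_0(\Omega)}$ for every $\e$. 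Having checked all the hypotheses, \eqref{eq_limit_eigen_stability} of Theorem \ref{theo_spectral_stability} gives \eqref{lim_eigen_pseudo}. The only thing to watch is that all the limits above are taken in $L^2(\R^N)$ and not in $L^2_0(\Omega)$, so one must consistently move through the Fourier transform via Plancherel; it is precisely the interplay of the a.e.\ convergence \eqref{fe_to_f0_pseudo} with the two-sided bound \eqref{hp_estimates_fe} — Fatou on one side, domination on the other — that makes the argument close.
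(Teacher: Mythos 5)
Your proposal is correct and follows the same overall strategy as the paper: verify the hypotheses of Theorem \ref{theo_spectral_stability} with $P_\e=\mathrm{Id}$, $E_\e$ the inclusion, $\mc{Z}_0=\mc{Z}_\e=L^2_0(\Omega)$, the reference form $(\mc{E}^{(1)},\mc{F}_1)$, and the equiboundedness estimate $\mc{E}^{(1)}(R_\e u)\le C_1^{-1}\norm{u}^2_{L^2_0(\Omega)}$ coming from \eqref{hp_estimates_fe}. The one place where you diverge is the proof of \eqref{hp_convergence_spectral_stab_bilinear}. The paper extracts a subsequence with $|\widehat{R_{\e_n}u}|\le g$ a.e.\ for some $g\in L^2(\R^N)$ (possible because $R_{\e_n}u\to w$ strongly in $L^2$ by the compact embedding), and then appeals to dominated convergence for $\int_{\R^N} f_{\e_n}\widehat{R_{\e_n}u}\,\widehat v\,\textnormal{d}\xi$. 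You instead split $f_\e=f_\e^{1/2}\cdot f_\e^{1/2}$ and pair the weak $L^2$-convergence $f_\e^{1/2}\widehat{R_\e u}\rightharpoonup f_0^{1/2}\widehat w$ (from a.e.\ convergence plus the uniform bound $\norm{f_\e^{1/2}\widehat{R_\e u}}^2_{L^2}\le\norm{u}^2_{L^2}$) against the strong $L^2$-convergence $f_\e^{1/2}\widehat v\to f_0^{1/2}\widehat v$, the latter obtained by dominated convergence with dominating function proportional to $f_0|\widehat v|^2\in L^1$ since $v\in\mc{F}_0$. Your variant is somewhat more robust: the paper's dominating function for the undivided integrand is $C_0 f_0\,g\,|\widehat v|$, and its integrability is not immediate for a general $v\in\mc{F}_0$ (it does follow when $v\in D[H_0]$, i.e.\ when $f_0\widehat v\in L^2$, and then extends by density, but the paper does not spell this out); the weak--strong pairing sidesteps this by only ever needing $f_0|\widehat v|^2\in L^1$, which is exactly the membership $v\in\mc{F}_0$.
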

\begin{proof}
Thanks to \eqref{hp_estimates_fe}, for any $u \in L^2_0(\Omega)$
\begin{equation}
\mc{E}^{(1)}(R_\e u) \le \frac{1}{C_1} \mc{E}^{(\e)}(R_\e u) =\frac{1}{C_1}  (u,R_\e u)_{L^2(\Omega)} \le \frac{1}{C_1} \norm{u}_{L^2(\Omega)}.
\end{equation}
Furthermore if $R_{\e_n} u \rightharpoonup w$  weakly in $\mc{F}_1$ for some $w \in \mc{F}_1$ as $n\to \infty$, then, up to a subsequence, 
\begin{equation}
\int_{\R^N} f_0 |\widehat{w}|^2\ \textnormal{d}\xi \le \liminf_{n \to \infty}  \int_{\R^N} f_{\e_n} |\widehat{R_{\e_n} u}|^2\, \textnormal{d}\xi
\le \frac{1}{C_1} \norm{u}_{L^2(\Omega)},
\end{equation}
by Fatou's Lemma and Proposition \ref{prop_comp_embedding_pseudo}. Since, up to a subsequence, there exists a function $g \in L^2(\R^N)$ such that $|\widehat{R_{\e_n} u}| \le g$ a.e. in $\R^N$, by the Dominated Convergence Theorem 
\begin{equation}
\lim_{n \to \infty} \int_{\R^N} f_{\e_n} \widehat{R_{\e_n} u}\widehat{v}\ \textnormal{d}\xi =  
\int_{\R^N} f_0 \widehat{w}\widehat{v} \ \textnormal{d}\xi.
\end{equation}
By Theorem \ref{theo_spectral_stability}, we conclude that \eqref{lim_eigen_pseudo} holds.
\end{proof}

Fix a simple eigenvalue $\la_0:=\la_{n_0,0}$ and a correspondent eigenfunction $\phi_0:=\phi_{n_0,0}$.
\begin{proposition}
For any $u \in \mc{F_\e}$ we have that 
\begin{equation}\label{eq_Le_pseudo}
\mc{E}^{(\e)}(\phi_0, u)= \la_0 \int_{\Omega} \widehat{\phi}_0 \widehat{u} \ \textnormal{d}\xi + L_\e(u),
\end{equation}
where
\begin{align*}
    L_\e(u)=\int_{\R^N} (f_\e -f_0) \widehat{\phi}_0 \widehat{u} \ \textnormal{d}\xi.
\end{align*}
\end{proposition}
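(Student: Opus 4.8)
The plan is to derive \eqref{eq_Le_pseudo} directly from the definition of the bilinear form $\mc{E}^{(\e)}$ together with the eigenvalue equation satisfied by $\phi_0$. Since $\phi_0$ is an eigenfunction of $H_0$ with eigenvalue $\la_0$, that is $H_0\phi_0=\la_0\phi_0$, and $H_0$ acts on the Fourier side as multiplication by its symbol $f_0$, taking the Fourier transform gives the pointwise identity $f_0(\xi)\widehat{\phi}_0(\xi)=\la_0\widehat{\phi}_0(\xi)$ for a.e.\ $\xi\in\R^N$. The desired formula will then follow by writing $f_\e=f_0+(f_\e-f_0)$ inside the integral defining $\mc{E}^{(\e)}(\phi_0,u)$ and using this identity on the $f_0$-part.

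First I would check that all the objects in the statement are well defined. From \eqref{hp_estimates_fe} one has $f_\e\le C_0 f_0$ a.e., hence $|f_\e\widehat{\phi}_0|\le C_0|f_0\widehat{\phi}_0|=C_0\la_0|\widehat{\phi}_0|$ a.e.; in particular $f_\e^{1/2}\widehat{\phi}_0\in L^2(\R^N)$, so that $\phi_0\in\mc{F}_\e$ and the left-hand side of \eqref{eq_Le_pseudo} is meaningful for every $u\in\mc{F}_\e$. Moreover $(f_\e-f_0)\widehat{\phi}_0\in L^2(\R^N)$ by the same bound (recall $f_0\widehat{\phi}_0=\la_0\widehat{\phi}_0\in L^2$), so by Cauchy--Schwarz $L_\e$ defines a continuous linear functional on $L^2_0(\Omega)$, and a fortiori on $\mc{F}_\e$.

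With this in hand, for any $u\in\mc{F}_\e$ I would simply compute
\begin{align*}
\mc{E}^{(\e)}(\phi_0,u)=\int_{\R^N}f_\e\,\widehat{\phi}_0\,\overline{\widehat{u}}\,d\xi
=\int_{\R^N}f_0\,\widehat{\phi}_0\,\overline{\widehat{u}}\,d\xi+\int_{\R^N}(f_\e-f_0)\,\widehat{\phi}_0\,\overline{\widehat{u}}\,d\xi
=\la_0\int_{\R^N}\widehat{\phi}_0\,\overline{\widehat{u}}\,d\xi+L_\e(u),
\end{align*}
where the splitting of the integral is legitimate because each integrand is absolutely integrable by the previous step, and the last equality uses $f_0\widehat{\phi}_0=\la_0\widehat{\phi}_0$. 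Finally, by the Plancherel identity and $u\in L^2_0(\Omega)$ one has $\int_{\R^N}\widehat{\phi}_0\,\overline{\widehat{u}}\,d\xi=\int_\Omega\phi_0\,\overline{u}\,dx$, which is precisely the first term on the right-hand side of \eqref{eq_Le_pseudo}. The argument is essentially a one-line manipulation once the symbol-level eigenvalue identity $f_0\widehat{\phi}_0=\la_0\widehat{\phi}_0$ is recorded; the only point requiring (minor) care is the integrability bookkeeping needed to split the integral and to ensure $L_\e\in(\mc{F}_\e)^*$, and this is an immediate consequence of the growth control $f_\e\le C_0 f_0$ from \eqref{hp_estimates_fe}, so I do not expect any real obstacle.
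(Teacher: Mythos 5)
The central claim on which your argument rests — that $H_0$ acts on the Fourier side as multiplication by its symbol, so that $f_0(\xi)\widehat{\phi}_0(\xi)=\la_0\widehat{\phi}_0(\xi)$ for a.e.\ $\xi\in\R^N$ — is false in this setting, and the proof does not survive without it. The operator $H_0$ is the one associated with the form $\mc{E}^{(0)}$ on $\mc{F}_0\subset L^2_0(\Omega)$; it maps into $L^2_0(\Omega)$ and is given by $H_0 u=\big(\textgoth{F}^{-1}(f_0\widehat{u})\big)\big|_\Omega$ (extended by zero), not by $\textgoth{F}^{-1}(f_0\widehat{u})$ itself. Because $\Omega$ is bounded and $f_0$ is genuinely non-local, $\textgoth{F}^{-1}(f_0\widehat{\phi}_0)$ is in general supported on all of $\R^N$, so $H_0\phi_0=\la_0\phi_0$ only gives information about $\textgoth{F}^{-1}(f_0\widehat{\phi}_0)$ \emph{on $\Omega$}; its Fourier transform is not $f_0\widehat{\phi}_0$. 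To see that your pointwise identity cannot hold, note that $\phi_0\in L^2_0(\Omega)$ is compactly supported, hence $\widehat{\phi}_0$ is (real-)analytic and nonzero a.e.\ by Paley--Wiener; the claimed identity $(f_0-\la_0)\widehat{\phi}_0=0$ a.e.\ would then force $f_0\equiv\la_0$ a.e., which is absurd (in the paper's own example $f_0(\xi)=1+|\xi|^2$). This error also contaminates your integrability bookkeeping: you obtain $f_0\widehat{\phi}_0\in L^2$ by writing $|f_0\widehat{\phi}_0|=\la_0|\widehat{\phi}_0|$, whereas the correct justification is Proposition \ref{prop_regularity}, which yields $\phi_0\in D[H_0]$, i.e.\ $f_0\widehat{\phi}_0\in L^2(\R^N)$ directly.

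What is true, and what the paper actually exploits, is the \emph{weak} eigenvalue equation together with the support constraint on the test function. Concretely, once $f_0\widehat{\phi}_0\in L^2$ is secured, Plancherel gives
\begin{align}
\int_{\R^N} f_0\,\widehat{\phi}_0\,\overline{\widehat{u}}\ \textnormal{d}\xi
=\int_{\R^N}\textgoth{F}^{-1}\!\big(f_0\widehat{\phi}_0\big)\,\overline{u}\ \textnormal{d}x
=\int_{\Omega}\textgoth{F}^{-1}\!\big(f_0\widehat{\phi}_0\big)\,\overline{u}\ \textnormal{d}x
=\la_0\int_\Omega \phi_0\,\overline{u}\ \textnormal{d}x,
\end{align}
where the restriction to $\Omega$ comes solely from $u\in L^2_0(\Omega)$ (not from any support property of $\textgoth{F}^{-1}(f_0\widehat{\phi}_0)$), and the last equality uses $H_0\phi_0=\la_0\phi_0$ \emph{as an identity in $L^2_0(\Omega)$}. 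The paper instead verifies this equality by approximating $u\in\mc{F}_\e$ in $L^2$ by $u_n\in C_c^\infty(\Omega)$ and using the weak eigenvalue equation on $u_n$; the limit passes because both sides are $L^2$-continuous in $u$ once $f_0\widehat{\phi}_0\in L^2$. Note in particular that one cannot simply test the eigenvalue equation on $u$ itself: \eqref{hp_estimates_fe} gives $f_\e\le C_0 f_0$, hence $\mc{F}_0\subset\mc{F}_\e$, but not the reverse, so $u\in\mc{F}_\e$ need not belong to $\mc{F}_0$ — which is exactly why either the density argument or the Plancherel-and-restriction step is needed. The splitting $f_\e=f_0+(f_\e-f_0)$ inside the integral is, on the other hand, fine and is also what the paper does.
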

\begin{proof}
Let $u \in \mc{F_\e}$. Then clearly
\begin{equation}
\mc{E}^{(\e)}(\phi_0, u)= \int_{\R^N} f_0 \widehat{\phi}_0 \widehat{u} \ \textnormal{d}\xi + \int_{\R^N} (f_\e -f_0) \widehat{\phi}_0 \widehat{u} \ \textnormal{d}\xi
\end{equation}
and the previous equation makes sense  in view of Proposition \ref{prop_regularity}. Let $\{u_n\}_{n\in \mathbb{N}} \subset C^\infty_c(\Omega)$ be a sequence of functions such that $u_n \to u$ strongly in $L^2(\Omega)$. Hence, by Proposition \ref{prop_regularity},
\begin{equation}
\int_{\R^N} f_0 \widehat{\phi}_0 \widehat{u} \ \textnormal{d}\xi= \lim_{n\to \infty} \int_{\R^N} f_0 \widehat{\phi}_0 \widehat{u}_n \ \textnormal{d}\xi= 
\lim_{n\to \infty}\la_0\int_{\R^N}\phi_0 u_n \ \textnormal{d}x=\la_0\int_{\R^N}\phi_0 u\ \textnormal{d}\xi
\end{equation}
which proves \eqref{eq_Le_pseudo}.
\end{proof}

Let us define 
\begin{equation}
J_\e(u)= \frac{1}{2} \mc{E}^{(\e)}(u)-L_\e(u).
\end{equation}
Then, in view of Proposition \ref{prop_J_min}, there exists a function $V_\e \in \mc{F}_\e$ that minimizes $J_\e$ over $\mc{F}_\e$ .
\begin{proposition}
We have that 
\begin{equation}\label{eq_norm_Ve_to0_pseduo}
\lim_{\e \to 0^+}\mc{E}^{(\e)}(V_\e)=0.
\end{equation}
\end{proposition}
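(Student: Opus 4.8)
The plan is to estimate $\mc{E}^{(\e)}(V_\e)$ by testing its Euler--Lagrange equation against $V_\e$ itself and then to pass to the limit $\e\to0^+$ by Dominated Convergence. First I would observe that here $Z_\e=\mc{F}_\e$, so by Proposition~\ref{prop_J_min} the minimizer $V_\e$ of $J_\e$ is the unique element of $\mc{F}_\e$ with
\begin{equation}\label{eq_Ve_pseudo_plan}
\mc{E}^{(\e)}(V_\e,u)=L_\e(u)=\int_{\R^N}(f_\e-f_0)\,\widehat{\phi}_0\,\widehat{u}\,\textnormal{d}\xi\qquad\text{for every }u\in\mc{F}_\e.
\end{equation}
Taking $u=V_\e$, writing the symbol as $\tfrac{f_\e-f_0}{\sqrt{f_\e}}\cdot\sqrt{f_\e}$ and using the Cauchy--Schwarz inequality, one gets
\begin{equation}\label{eq_Ve_pseudo_CS_plan}
\mc{E}^{(\e)}(V_\e)\le\int_{\R^N}|f_\e-f_0|\,|\widehat{\phi}_0|\,|\widehat{V_\e}|\,\textnormal{d}\xi\le\left(\int_{\R^N}\frac{|f_\e-f_0|^2}{f_\e}|\widehat{\phi}_0|^2\,\textnormal{d}\xi\right)^{\!1/2}\big(\mc{E}^{(\e)}(V_\e)\big)^{1/2},
\end{equation}
and therefore $\mc{E}^{(\e)}(V_\e)\le\int_{\R^N}\tfrac{|f_\e-f_0|^2}{f_\e}|\widehat{\phi}_0|^2\,\textnormal{d}\xi$.

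Next I would show that the right-hand side vanishes as $\e\to0^+$ via Dominated Convergence. By \eqref{fe_to_f0_pseudo} and $f_\e>1$ a.e., the integrand converges to $0$ a.e.\ on $\R^N$. For the dominating function, from $0<f_\e\le C_0f_0$ and $1<f_0$ one obtains $|f_\e-f_0|\le C\,f_0$ with $C$ independent of $\e$, hence
\begin{equation}\label{eq_Ve_pseudo_dom_plan}
\frac{|f_\e-f_0|^2}{f_\e}|\widehat{\phi}_0|^2\le C^2f_0^2\,|\widehat{\phi}_0|^2\qquad\text{a.e. in }\R^N,
\end{equation}
and $f_0^2|\widehat{\phi}_0|^2\in L^1(\R^N)$ because $\phi_0\in\textnormal{Dom}(H_0)$ forces $f_0\widehat{\phi}_0\in L^2(\R^N)$. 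Dominated Convergence then yields \eqref{eq_norm_Ve_to0_pseduo}; combined with Proposition~\ref{prop_poin} and $\la_{\e,1}\ge1$ it also gives $\norm{V_\e}_{L^2_0(\Omega)}\to0$, so that \eqref{hp_limit_E_e_Ve_o} holds.

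I expect the only genuine point to be the choice of the $\e$-independent $L^1$-majorant in \eqref{eq_Ve_pseudo_dom_plan}: this is exactly where the two-sided bound \eqref{hp_estimates_fe} on the symbols (to control $f_\e$ by $f_0$), the normalization $f_\e>1$ (to control $1/f_\e$) and the regularity $\phi_0\in\textnormal{Dom}(H_0)$ all enter. Everything else is the standard variational computation \eqref{eq_Ve_pseudo_plan}--\eqref{eq_Ve_pseudo_CS_plan}.
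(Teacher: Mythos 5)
Your proof is correct and follows essentially the same route as the paper: test the Euler--Lagrange equation for $V_\e$ with $V_\e$, apply Cauchy--Schwarz together with $f_\e>1$ to absorb $\big(\mc{E}^{(\e)}(V_\e)\big)^{1/2}$, then use Dominated Convergence with the $\e$-independent majorant $C\,f_0^2|\widehat{\phi}_0|^2\in L^1$ supplied by Proposition~\ref{prop_regularity} and \eqref{hp_estimates_fe}. The only difference is cosmetic: you split the Cauchy--Schwarz as $\tfrac{f_\e-f_0}{\sqrt{f_\e}}\cdot\sqrt{f_\e}$, giving the marginally sharper bound $\int \tfrac{|f_\e-f_0|^2}{f_\e}|\widehat{\phi}_0|^2\,\textnormal{d}\xi$ in place of the paper's $\int |f_\e-f_0|^2|\widehat{\phi}_0|^2\,\textnormal{d}\xi$, but both are handled identically by the same majorant.
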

\begin{proof}
Testing \eqref{eq_Ve} with $V_\e$ we obtain
\begin{equation}\label{Eq_estimate_vanishing_pseudo}
\mc{E}^{(\e)}(V_\e) \le \left(\int_{\R^N}|f_\e-f_0|^2  |\widehat{\phi}_0|^2 \ \textnormal{d} \xi\right)^{\frac{1}{2}}
\left(\int_{\R^N}f_\e |V_\e|^2 \ \textnormal{d} \xi\right)^{\frac{1}{2}},
\end{equation}
in view of the H\"older inequality and  the fact that $f_\e>1$. By  Proposition \ref{prop_regularity}, the Dominated Convergence Theorem, and \eqref{fe_to_f0_pseudo} we conclude that \eqref{eq_norm_Ve_to0_pseduo} holds.
\end{proof}

In conclusion, we are in position to apply Theorem \ref{theo_exp_1_order} thus obtaining 
\begin{equation}\label{eq_eigen_pseudo}
\la_\e -\la_0= \la_0 \int_{\Omega} \phi_0 V_\e \ \textnormal{d}x + O(\norm{V_\e}_{L^2(\Omega)}^2) \quad  \text{ as } \e \to 0^+, 
\end{equation}
in view of Remark \ref{Rmk:Accurate1Exp}.

If the rate of convergence of $f_\e \to f_0$ can be quantified, we can compute the vanishing order of $\la_0 \int_{\Omega} \phi_0 V_\e \, dx$. More precisely, we assume that there exists $\e_0 \in (0,1]$ such that 
\begin{align}
&\lim_{\e \to 0^+}\frac{f_\e(\xi)-f_0(\xi)}{\e}=h(\xi),  \label{hp_fe_fo_precise} \\
&\left|\frac{f_\e(\xi)-f_0(\xi)}{\e}\right|\le  C (1+|h(\xi)|) \quad \text{ for a.e. } \xi \in \R^N, \text{ for any } \e \in (0,\e_0), \label{hp_fe_fo_incremental_ratio}
\end{align}
and
\begin{align}\label{hp_h_fo}
|h(\xi)|^2\le C f_0(\xi)^2 f_\e(\xi) \quad \text{ for a.e. } \xi \in \R^N, \text{ for any } \e \in [0,\e_0)
\end{align}
for some measurable functions $h: \R^N \to \R$ and constant $C>0$. By the Lagrange Theorem,  the assumption $\left|\frac{f_\e(\xi)-f_0(\xi)}{\e}\right|\le C (1+|h(\xi)|)$ is verified, when, for example,  $\e \to f_\e(\xi)$ is derivable for any $\e \in (0,\e_0)$, and  
$\left|\pd{f_\e}{\e}(\xi)\right|\le C (1+|h(\xi)|)$ for some constant $C>0$ that does not depend on $\e$.

\begin{proposition}\label{prop_convergence_Ve_pseudo}
There exists $V \in \mc{F}_0$ such that $\frac{V_\e}{\e} \rightharpoonup V$ weakly in $\mc{F}_1$ and $V$ solves the equation 
\begin{equation}
\int_{\R^N} f_0 \widehat{V}\widehat{u}\ \textnormal{d} \xi =\int_{\R^N} h \widehat{\phi}_0 \widehat{u}\ \textnormal{d} \xi, \quad \text{ for any } u \in {\rm{Dom}}[H_0].
\end{equation}
\end{proposition}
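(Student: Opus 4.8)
The plan is to rescale, working with $W_\e:=\e^{-1}V_\e$. By Proposition~\ref{prop_J_min} and equation~\eqref{eq_Ve}, together with~\eqref{eq_Le_pseudo} (here the admissible set in Proposition~\ref{prop_J_min} is all of $\mc{F}_\e$, so \eqref{eq_Ve} holds against every $u\in\mc{F}_\e$), the function $W_\e$ satisfies
\begin{equation}\label{eq_W_pseudo}
\int_{\R^N} f_\e \,\widehat{W_\e}\,\widehat{u}\ \textnormal{d}\xi = \int_{\R^N}\frac{f_\e - f_0}{\e}\,\widehat{\phi}_0\,\widehat{u}\ \textnormal{d}\xi \qquad \text{for every } u \in \mc{F}_\e .
\end{equation}
Note that ${\rm{Dom}}[H_0]\subseteq\mc{F}_0\subseteq\mc{F}_\e$ by~\eqref{hp_estimates_fe} and the assumption $f_\e>1$. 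First I would prove a bound on $\mc{E}^{(\e)}(W_\e)$ uniform in $\e$; then I would extract a weak limit in $\mc{F}_1$ and upgrade it, via the compact embedding $\mc{F}_1\hookrightarrow L^2_0(\Omega)$ of Proposition~\ref{prop_comp_embedding_pseudo}, to strong $L^2$ (hence, along a subsequence, a.e.) convergence of $\widehat{W_\e}$; finally I would pass to the limit in~\eqref{eq_W_pseudo} tested against a fixed $u\in{\rm{Dom}}[H_0]$, closing with a uniqueness statement and the Urysohn subsequence principle.

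For the uniform bound I would test~\eqref{eq_W_pseudo} with $u=W_\e$ and apply the Cauchy--Schwarz inequality after writing the right-hand side as the pairing of $\big(\tfrac{f_\e-f_0}{\e}\big)\widehat{\phi}_0 f_\e^{-1/2}$ with $f_\e^{1/2}\widehat{W_\e}$, obtaining
\begin{equation}\label{eq_W_bound_pseudo}
\mc{E}^{(\e)}(W_\e)\le \int_{\R^N}\left|\frac{f_\e-f_0}{\e}\right|^2\frac{|\widehat{\phi}_0|^2}{f_\e}\ \textnormal{d}\xi .
\end{equation}
By~\eqref{hp_fe_fo_incremental_ratio} the integrand is at most $2C^2(1+|h|^2)|\widehat{\phi}_0|^2/f_\e$; bounding $1/f_\e\le 1$ on the $h$-free part and using~\eqref{hp_h_fo} in the form $|h|^2/f_\e\le Cf_0^2$ on the rest, the right-hand side of~\eqref{eq_W_bound_pseudo} is $\le 2C^2\big(\norm{\phi_0}_{L^2(\R^N)}^2 + C\norm{f_0\widehat{\phi}_0}_{L^2(\R^N)}^2\big)$, which equals $2C^2(1+C\la_0^2)$ because $\phi_0\in{\rm{Dom}}[H_0]$ is a normalized eigenfunction, so $f_0\widehat{\phi}_0=\la_0\widehat{\phi}_0$. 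Together with~\eqref{hp_estimates_fe}, which gives $\mc{E}^{(1)}(W_\e)\le C_1^{-1}\mc{E}^{(\e)}(W_\e)$ and hence (since $f_1>1$) a bound on the full $\mc{F}_1$-norm, this shows $\{W_\e\}_\e$ is bounded in $\mc{F}_1$. Therefore there exist $\e_n\to 0^+$ and $V\in\mc{F}_1$ with $W_{\e_n}\rightharpoonup V$ weakly in $\mc{F}_1$; by Proposition~\ref{prop_comp_embedding_pseudo} $W_{\e_n}\to V$ strongly in $L^2_0(\Omega)$, so by the Plancherel identity $\widehat{W_{\e_n}}\to\widehat{V}$ in $L^2(\R^N)$ and, along a further subsequence, a.e.\ in $\R^N$. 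Applying Fatou's lemma to $\int_{\R^N}f_{\e_n}|\widehat{W_{\e_n}}|^2$ and using~\eqref{fe_to_f0_pseudo} then yields $f_0^{1/2}\widehat{V}\in L^2(\R^N)$, i.e.\ $V\in\mc{F}_0$.

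To pass to the limit in~\eqref{eq_W_pseudo}, fix $u\in{\rm{Dom}}[H_0]$. For the left-hand side, $f_{\e_n}\widehat{u}\to f_0\widehat{u}$ a.e.\ by~\eqref{fe_to_f0_pseudo} while $|f_{\e_n}\widehat{u}|\le C_0 f_0|\widehat{u}|\in L^2(\R^N)$ by~\eqref{hp_estimates_fe}, so $f_{\e_n}\widehat{u}\to f_0\widehat{u}$ strongly in $L^2(\R^N)$ by dominated convergence, and pairing this with $\widehat{W_{\e_n}}\to\widehat{V}$ in $L^2(\R^N)$ gives $\int f_{\e_n}\widehat{W_{\e_n}}\widehat{u}\to\int f_0\widehat{V}\widehat{u}$. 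For the right-hand side, \eqref{hp_fe_fo_precise} provides the pointwise limit $h\widehat{\phi}_0\widehat{u}$ and \eqref{hp_fe_fo_incremental_ratio} the majorant $C(1+|h|)|\widehat{\phi}_0||\widehat{u}|$, which lies in $L^1(\R^N)$ since $|\widehat{\phi}_0||\widehat{u}|\in L^1(\R^N)$ and, by~\eqref{hp_h_fo} at $\e=0$, $|h|\,|\widehat{\phi}_0|\,|\widehat{u}|\le C^{1/2}(f_0|\widehat{\phi}_0|)(f_0^{1/2}|\widehat{u}|)\in L^1(\R^N)$ by Cauchy--Schwarz (using $\phi_0,u\in{\rm{Dom}}[H_0]$ and $f_0>1$). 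Dominated convergence then produces
\begin{equation}
\int_{\R^N} f_0\,\widehat{V}\,\widehat{u}\ \textnormal{d}\xi = \int_{\R^N} h\,\widehat{\phi}_0\,\widehat{u}\ \textnormal{d}\xi \qquad \text{for every } u\in{\rm{Dom}}[H_0].
\end{equation}

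It remains to note that $V$ is the unique element of $\mc{F}_0$ satisfying this identity: the functional $u\mapsto\int_{\R^N} h\widehat{\phi}_0\widehat{u}$ is bounded on $\big(\mc{F}_0,(\mc{E}_1^{(0)})^{1/2}\big)$ by the same estimate used above, and ${\rm{Dom}}[H_0]$ is dense in $\mc{F}_0$ for the form norm (for instance $e^{-tH_0}w\to w$ in $\mc{F}_0$ as $t\to 0^+$, exactly as in the proof of Proposition~\ref{prop_regularity}), so two solutions $V_1,V_2\in\mc{F}_0$ would satisfy $\mc{E}^{(0)}(V_1-V_2,\cdot)\equiv 0$ on $\mc{F}_0$ and hence $V_1=V_2$ since $f_0>1$. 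Because every subsequence of $\{W_\e\}_\e$ admits, by the argument above, a further subsequence converging weakly in $\mc{F}_1$ to this same $V$, the Urysohn subsequence principle gives $\e^{-1}V_\e=W_\e\rightharpoonup V$ weakly in $\mc{F}_1$ along the whole family, with $V\in\mc{F}_0$, which is the assertion. I expect the main obstacle to be the uniform estimate~\eqref{eq_W_bound_pseudo}: it is precisely there that \eqref{hp_fe_fo_incremental_ratio} and \eqref{hp_h_fo} must be combined with the regularity $\phi_0\in{\rm{Dom}}[H_0]$ so that the weights $f_\e$ and $f_0$ cancel correctly; the passages to the limit and the uniqueness are then routine.
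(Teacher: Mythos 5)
Your proof follows the same route as the paper's: rescale to $W_\e := \e^{-1}V_\e$, test the rescaled Euler--Lagrange equation with $W_\e$ to get a uniform bound on $\mc{E}^{(\e)}(W_\e)$, extract a weak limit $V$ in $\mc{F}_1$, promote $V$ to $\mc{F}_0$ via Fatou, pass to the limit in the equation by Dominated Convergence, and close with uniqueness plus the Urysohn subsequence principle. The structure, the auxiliary estimates, and the use of hypotheses \eqref{hp_fe_fo_precise}--\eqref{hp_h_fo} are all the paper's.

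There is, however, one genuinely wrong step. You assert that $f_0\widehat{\phi}_0=\la_0\widehat{\phi}_0$ pointwise on the grounds that $\phi_0$ is a normalized eigenfunction, and from this deduce $\norm{f_0\widehat{\phi}_0}_{L^2(\R^N)}^2=\la_0^2$. This is false. The operator $H_0$ here is the self-adjoint operator on $L^2_0(\Omega)$ associated to the form $\mc{E}^{(0)}$, i.e., the \emph{restricted} pseudo-differential operator, not the unrestricted Fourier multiplier on $\R^N$. The eigenvalue equation $H_0\phi_0=\la_0\phi_0$ only tells you that $\textgoth{F}^{-1}(f_0\widehat{\phi}_0)$ and $\la_0\phi_0$ agree when tested against functions supported in $\Omega$; outside $\Omega$ they generally differ, so $(f_0-\la_0)\widehat{\phi}_0$ is the Fourier transform of a function supported in $\R^N\setminus\Omega$ and is not identically zero (already for the Dirichlet fractional Laplacian on a bounded domain the eigenfunction extended by zero is not a Fourier eigenvector). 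Fortunately the error is not load-bearing: the uniform bound requires only that $\norm{f_0\widehat{\phi}_0}_{L^2(\R^N)}$ be finite, which holds because $\phi_0\in D[H_0]$, as the paper itself secures by invoking Proposition \ref{prop_regularity}. Replacing the explicit value $\la_0^2$ by the correct but unspecified quantity $\norm{f_0\widehat{\phi}_0}_{L^2(\R^N)}^2<+\infty$, your estimate survives, and the remaining dominated-convergence and density arguments (which you carry out a touch more carefully than the paper, by testing only against $u\in\mathrm{Dom}[H_0]$ before passing to the limit and then closing by density) go through unchanged.
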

\begin{proof}
Thanks to \eqref{hp_estimates_fe}, \eqref{hp_fe_fo_precise}, Proposition \ref{prop_regularity}, and \eqref{Eq_estimate_vanishing_pseudo} the family
$\left\{\frac{V_\e}{\e}\right\}_{\e \in [0,1]}$ is bounded in $\mc{F}_1$. In particular there exists a sequence $\e_n \to 0^+$ and $V \in \mc{F}_1$ such that $\frac{V_\e}{\e} \rightharpoonup V$ weakly in $\mc{F}_1$ as $n \to \infty$ and so,  by Proposition \ref{prop_compact_embedding},  $\frac{V_\e}{\e} \to V$ strongly in $L^2(\R^N)$. 

Equation \eqref{eq_Ve} in this case is
\begin{equation}\label{Eq_Ve_pseudo}
\int_{\R^N} f_\e \widehat{V}_\e \widehat{u} \ \textnormal{d} \xi = \int_{\R^N} (f_\e-f_0)  \widehat{\phi}_0 \widehat{u} \ \textnormal{d} \xi \quad \text{ for any } u  \in \mc{F}_\e.
\end{equation}
Furthermore, up to pass to a further subsequence, by Fatou's Lemma, 
\begin{align}
\int_{\R^N} f_0 \widehat{V}^2 \, d\xi\le\liminf_{n\to \infty} \int_{\R^N}  f_\e \frac{\widehat{V}_{\e_n}^2}{{\e_n}^2} \ \textnormal{d} \xi 
\le\liminf_{n\to \infty} \int_{\R^N}  \frac{|f_{\e_n}-f_0|}{\e_n}\frac{\widehat{V}_{\e_n}}{{\e_n}} \widehat{\phi}_0 \ \textnormal{d} \xi 
\end{align}
and hence, by the Cauchy-Schwarz inequality, 
\begin{align*}
    \liminf_{\e_n \to 0^+} \int_{\R^N} f_{\e_n} \frac{\widehat{V}_{\e_n}^2}{{\e_n}^2}\ \textnormal{d}\xi \leq \lim_{n \to \infty} 
    \left(\int_{\R^N} \frac{(f_0-f_{\e_n})^2}{ \e_n^2} f^{-1}_{\e_n}\widehat{\phi}_0^2 \ \textnormal{d}\xi\right)^{\frac{1}{2}}.
\end{align*}
We conclude that $V \in \mathcal{F}_0$ in view of \eqref{hp_fe_fo_precise}, \eqref{hp_fe_fo_incremental_ratio}, \eqref{hp_h_fo} and Proposition \ref{prop_regularity}.
Multiplying by $\e_n^{-1}$ and passing to the limit as $n \to \infty$ in \eqref{Eq_Ve_pseudo}
we obtain 
\begin{equation}
\int_{\R^N} f_0 \widehat{V} \widehat{u} \ \textnormal{d} \xi = \int_{\R^N} h\widehat{\phi}_0 \widehat{u} \ \textnormal{d} \xi \quad \text{ for any } u  \in \mc{F}_0,
\end{equation}
thanks to \eqref{hp_fe_fo_precise}, \eqref{hp_fe_fo_incremental_ratio}, \eqref{hp_h_fo} and the Dominated Convergence Theorem. Since the solution of the above equation is unique, we conclude that $\frac{V_\e}{\e} \rightharpoonup V$ weakly in $\mc{F}_1$ by the Urysohn Subsequence Principle.
\end{proof}

In conclusion,  by the Plancherel identity and  Proposition \ref{prop_convergence_Ve_pseudo},
\begin{equation}
\frac{\la_0}{\e} \int_{\Omega} \phi_0 V_\e \ \textnormal{d}x = \la_0\int_{\R^N} \widehat{\phi}_0 \frac{\widehat{V}_\e}{\e} \ \textnormal{d}\xi \to 
 \la_0\int_{\R^N} \widehat{\phi}_0 \widehat{V} \ \textnormal{d}\xi = 
\int_{\R^N}f_0 \widehat{\phi}_0 \widehat{V} \ \textnormal{d}\xi=\int_{\R^N}h|\widehat{\phi}_0|^2\ \textnormal{d}\xi.
\end{equation}
Hence 
\begin{align}
    \la_\e -\la_0= \e \int_{\R^N}h|\widehat{\phi}_0|^2\ \textnormal{d}\xi+ O(\e^2) \quad  \text{ as } \e \to 0^+. 
\end{align}

\begin{example}
Let $\phi_{n,0}$ be the eigenfunction associated to the simple  eigenvalue $\la_{n,0}$ of the Dirichlet Laplacian on a bounded domain $\Omega$.
Choosing the symbol $f_{\e}(\xi):=1+|\xi|^{2-2\e}$ for any $\e \in [0,1]$, we have that $h(\xi)=-2\log(|\xi|)|\xi|^2$, where $h$ is  as in \eqref{hp_fe_fo_precise}.
Furthermore 
\begin{equation}
\left|\pd{f_\e(\xi)}{\e}\right|= -2\log(|\xi|)|\xi|^{2-2\e} \le C(1+2|\log(|\xi|)| \, | \xi|^{2}), \quad \text{ for any }\xi \in \R^N 
\end{equation}
for some positive constant $C>0$. We conclude that \eqref{hp_fe_fo_precise} holds.

Hence we have the following asymptotic for any simple eigenvalue $\la_{n,\e }$ of the fractional Laplacian $(-\Delta)^{1-\e}$ on a bounded domain with Dirichlet boundary conditions:
\begin{align}
   \la_{n,\e} -\la_{n,0}=-2 \e \int_{\R^N}\log(|\xi|)|\xi|^2 |\widehat{\phi}_ {n,0}(\xi)|^2\ \textnormal{d}\xi+ O(\e^2) \quad  \text{ as } \e \to 0^+.
\end{align}
In conclusion, we have  quantified  the spectral stability  result obtained in \cite{BPS_fractional_stability} in the special case of the fractional Laplacian.
\end{example}

\bigskip\noindent {\bf Acknowledgments.}  
The authors are members of GNAMPA-INdAM.
G. Siclari is partially supported by the MUR-PRIN project no. 20227HX33Z 
``Pattern formation in nonlinear phenomena'' granted by the European Union -- Next Generation EU.
The authors would also like to thank Veronica Felli and  Luigi Provenzano for their helpful suggestions which helped improve the manuscript.

\bibliographystyle{acm}
\bibliography{Dirichlet_forms}	
\end{document}